\documentclass[12pt,a4paper]{article}
\usepackage{amsmath}
\usepackage{amssymb}
\usepackage{amsthm}
\usepackage{xcolor}
\usepackage[colorlinks=true,allcolors=blue]{hyperref}
\usepackage{graphicx}
\usepackage[figurename={Figure}]{caption}
\usepackage{float}
\usepackage{sagetex}
\newcommand{\mb}[1]{\mathbb{#1}}

\newcommand{\F}{\mb{F}}

\newcommand{\Z}{\mb{Z}}
\newcommand{\abs}[1]{\left| #1 \right|}

\newcommand{\To}{\rightarrow}
\newcommand{\Emph}[1]{\emph{\textcolor{blue}{#1}}}
\newcommand{\Cay}[1]{\operatorname{Cay}\left(#1\right)}
\newcommand{\diag}[1]{\operatorname{diag}\left(#1\right)}
\newcommand{\dual}[1]{\widetilde{#1}}
\newcommand{\support}[1]{\operatorname{supp}\left(#1\right)}
\newcommand{\weight}[1]{\operatorname{wt}\left(#1\right)}
\newcommand{\weightclass}[1]{\operatorname{wc}\left(#1\right)}
\newenvironment{proofof}[1]{\noindent\emph{Proof of #1.}}{\qed}

\newtheorem{Lemma}{Lemma}

\newtheorem{Proposition}{Proposition}

\newtheorem{Theorem}{Theorem}

\newtheorem{Corollary}[Lemma]{Corollary}
\theoremstyle{remark}
\newtheorem*{remark}{Remark}
\newtheorem{Remark}{Remark}
\newtheorem*{remarks}{Remarks}
\theoremstyle{definition}
\newtheorem*{definition}{Definition}
\newtheorem{Definition}{Definition}
\newtheorem*{example}{Example}


\title{Classifying bent functions by their Cayley graphs}

\author{
Paul~Leopardi
\thanks{University of Melbourne; Australian Government -- Bureau of Meteorology
\protect\url{mailto:paul.leopardi@gmail.com}}
}

\date{11 December 2018}

\begin{document}

\maketitle

\begin{abstract}
In 1999 Bernasconi and Codenotti noted that the Cayley graph of a bent function is strongly regular.
This paper describes the concept of extended Cayley equivalence of bent functions,
discusses some connections between bent functions, designs, and codes,
and explores the relationship between extended Cayley equivalence and extended affine equivalence.
SageMath scripts and CoCalc worksheets are used to compute and display some of these relationships,
for bent functions up to dimension 8.

\end{abstract}

\section{Introduction}
\label{sec-Introduction}
Binary bent functions are important combinatorial objects.
Besides the well-known application of bent functions and their generalizations to cryptography
\cite{Ada97} \cite[4.1-4.6]{Tok15bent},
bent functions have well-studied connections to Hadamard difference sets \cite{Dil74},
symmetric designs with the symmetric difference property \cite{DilS87block,Kan75symplectic},
projective two-weight codes \cite{CalK1986,Din2015} and strongly regular graphs.

In two papers, Bernasconi and Codenotti \cite{BerC99}, and then Bernasconi, Codenotti and Vanderkam
\cite{BerCV01} explored some of the connections
between bent functions and strongly regular graphs.
While these papers established that the Cayley graph of a binary bent function (whose value at 0 is
0) is a strongly regular graph
with certain parameters, they leave open the question of which strongly regular graphs with these
parameters are so obtained.

In a recent paper \cite{Leo17Hurwitz},
the author found an example of two infinite series of bent functions whose
Cayley graphs have the same strongly regular parameters at each dimension,
but are not isomorphic if the dimension is 8 or more.

Kantor, in 1983 \cite{Kan83exponential}, showed that the numbers of non-isomorphic projective linear
two weight codes with certain parameters,
Hadamard difference sets, and symmetric designs with certain properties, grow at least exponentially
with dimension.
This result suggests that the number of strongly regular graphs obtained as Cayley graphs of bent
functions also increases at least exponentially with dimension.


A 2003 paper by Cameron \cite{Cam2003} considers random strongly regular graphs with given parameters,
and outlines some prerequisites for a theory of random strongly regular graphs, including that
``the number of non-isomorphic graphs is superexponential in the number of vertices.''

The goal of the current paper is to further explore the connections between bent functions, their
Cayley graphs, and related combinatorial objects,
and in particular to examine the relationship between various equivalence classes of bent
functions, in particular, the relationship between the extended affine
equivalence classes and equivalence classes defined by isomorphism of Cayley graphs.
As well as a theoretical study of bent functions of all dimensions, a computational study is conducted
into bent functions of dimension at most 8,
using SageMath \cite{SageMath7517} and CoCalc \cite{CoCalc}.

The methodology for this study is modelled on \emph{experimental mathematics}.
As stated by Borwein and Devlin \cite[pp. 4-5]{BorD2008}:
\begin{quotation}
What makes experimental mathematics different (as an enterprise)
from the classical conception and practice of mathematics is that the
experimental process is regarded not as a precursor to a proof, to be
relegated to private notebooks and perhaps studied for historical
purposes only after a proof has been obtained. Rather, experimentation
is viewed as a significant part of mathematics in its own right, to
be published, considered by others, and (of particular importance)
\emph{contributing to our overall mathematical knowledge.}
\end{quotation}

The theoretical results listed this paper serve a few purposes.
First, in order to classify bent functions by their Cayley graphs,
it helps to understand the relationship between Cayley equivalence and other concepts of equivalence
of bent functions, especially if this helps to cut down the search space needed for the
classification.
A similar consideration applies to the duals of bent functions.
Second, some of the empirical observations made in the classification of bent functions in small
dimensions can be explained by these theoretical results.
Third, these theoretical results can improve our understanding of the relationships between
bent functions, projective two-weight codes, strongly regular graphs, and
symmetric block designs with the symmetric difference property.
In what follows, known results with references are presented as propositions, or sometimes as remarks;
and results where the statement or the proof seems to be missing or obscure within the existing literature
are presented as lemmas or theorems, with proofs.

This paper makes no pretence at being an exhaustive survey, neither should it be construed that
the lemmas and theorems listed here make any claim to originality.
Even given the current excellent electronic search capabilities available, it would be folly to do so given
the extensive literature that has been generated on the study of bent functions since the 1960s.
Some recent surveys include books by Cusick and Stanica \cite{CusS2017},
Mesnager \cite{Mes2016} and Tokareva \cite{Tok15bent},
and the article by Carlet and Mesnager \cite{CarM2016four}.

The remainder of the paper is organized as follows.
Section~\ref{sec-Preliminaries} covers the concepts, definitions and known results used later in the paper.
Section~\ref{sec-Bent-graphs} discusses the relationship between bent functions and strongly regular graphs.
Section~\ref{sec-Equivalence} introduces various concepts of equivalence of bent functions.
Section~\ref{sec-Bent-designs} discusses the relationship between bent functions and block designs.
Section~\ref{sec-Code} describes the SageMath and CoCalc code that has been used to obtain
the computational results of this paper.
Section~\ref{sec-Discussion} puts the results of this paper in the context of questions that are still open.
The appendices contain the proof of one of the properties of quadratic bent functions,
and list some of the properties of the equivalence classes of bent functions for dimension up to 8.

\section{Preliminaries}
\label{sec-Preliminaries}

This section presents some of the key concepts used in the remainder of the paper.
We first examine Boolean functions, then define bent Boolean functions,
and finally explore the relationships between bent functions
and Hamming weights.

\subsection{Boolean functions}
\label{sec-Boolean-functions}

Here and in the remainder of the paper, $\F_2$ denotes the field of two elements,
also known as $GF(2)$. Models of $\F_2$ include integer arithmetic modulo 2
($\Z/2\Z$ also known as $\Z_2$) and Boolean algebra with ``exclusive or'' as addition and ``and''
as multiplication.
\paragraph*{Boolean functions and Reed-Muller codes.}
Any Boolean function $f : \F_2^n \To \F_2$ can be represented as a reduced polynomial in at most $n$ variables over $F_2$
\cite{Mul54, Rot76} \cite[Ch. III, Section 2]{Dil74}.
This is called the \Emph{algebraic normal form} of $f$ \cite[Chapter 5]{Rue1986}.

\begin{example}
In Sage, using \verb!sage.crypto.boolean_function!, define:
\begin{sageblock}
from sage.crypto.boolean_function import BooleanFunction
f = BooleanFunction([1,1,1,0])
a = f.algebraic_normal_form()
\end{sageblock}
The algebraic normal form of the Boolean function \verb!f! on $\F_2^2$
with variables $x_0$ and $x_1$ and truth table $\sage{[1,1,1,0]}$ (in lexicographic order)
is then \verb!a = !$\sage{a}$
\cite[Boolean Functions]{SageMath8418}.
\end{example}

\begin{Definition}
\label{def-Reed-Muller-codes}
\cite{Mul54} \cite[Ch. 13, Section 3]{MacS77} \cite[10.5.2]{Sti07combinatorial}
The \Emph{Reed-Muller code} $RM(r, n)$ consists of those Boolean functions $f : \F_2^n \To \F_2$
whose algebraic normal form has degree $r$.
\end{Definition}
\begin{remark}
Some texts use the notation $\mathcal{R}(r,n)$ or $RM(r,2^n)$ for $RM(r,n)$.

Each Reed-Muller code $RM(r,n)$ is a linear subspace of the vector space of Boolean functions $f : \F_2^n \To \F_2$.

The Reed-Muller code $RM(1,n)$ consists of the $2^{n+1}$ affine functions $f(x) = \langle c, x \rangle + \delta$
for $c \in \F_2^n,\ \delta \in \F_2$ \cite[Ch 14, Section 3]{MacS77} \cite[10.5.2]{Sti07combinatorial}.
\end{remark}


\paragraph*{Bent Boolean functions.}
Bent Boolean functions can be defined in a number of equivalent ways.
The definition used here involves the Walsh Hadamard Transform.
\begin{Definition}
\label{def-Walsh-Hadamard-transform}
\cite[Ch. III, Section 2]{Dil74} \cite[Ch. 2, Section 3]{MacS77}
The Walsh Hada\-mard transform of
a Boolean function $f : \F_2^n \To \F_2$ is
\begin{align*}
W_f(x)
&:=
\sum_{y \in \F_2^n} (-1)^{f(y) + \langle x, y \rangle}
\intertext{where}
\langle x , y \rangle
&:= \sum_{i=0}^{n-1} x_i y_i.
\end{align*}
\end{Definition}
\begin{example}
Using \verb!sage.crypto.boolean_function! in Sage, define:
\begin{sageblock}
from sage.crypto.boolean_function import BooleanFunction
f = BooleanFunction([1,1,1,0])
w = f.walsh_hadamard_transform()
\end{sageblock}
\begin{sagesilent}
from sage.misc.banner import require_version
\end{sagesilent}

The Walsh Hada\-mard transform of the Boolean function \verb!f! on $\F_2^2$ is then
\verb!w =! $\sage{w if require_version(8,2,0) else tuple(-v for v in w)}$
as a truth table in lexicographic order
\cite[Boolean Functions]{SageMath8418}.
\end{example}
\begin{remarks}~

\begin{enumerate}
\item
In versions of Sage before 8.2 the \verb!walsh_hadamard_transform! method has an incorrect sign
[Sage trac ticket \#23931].
\item
For Boolean functions $f : \F_{2^n} \To \F_2$,
where $\F_{2^n}$ is the finite field on $2^n$ elements,
the \emph{trace form} \cite[3.1]{Jac64}

is used to define the Walsh Hada\-mard transform.
\end{enumerate}
\end{remarks}

\begin{Definition}
\label{def-Bent-function}
A Boolean function $f : \F_2^{2m} \To \F_2$ is \Emph{bent}
if and only if its Walsh Hada\-mard transform has constant absolute value $2^{m}$ \cite[p. 74]{Dil74}
\cite[p. 300]{Rot76}.
\end{Definition}
\begin{example}
The Boolean function \verb!f! in the previous example is bent, since its
Walsh Hada\-mard transform has constant absolute value $\sage{abs(w[0])}$.
\end{example}

The remainder of this paper refers to bent Boolean functions simply as bent functions.
\begin{remarks}~

\begin{enumerate}
\item
Bent functions can also be characterized as those Boolean functions whose Hamming distance
from any affine Boolean function is the maximum possible \cite[Ch. 14 Theorem 6]{MacS77} \cite[Theorem 3.3]{MeiS90}.
\item
The property of being a bent function is invariant with respect to the non-degenerate symmetric bilinear form used to define
the Walsh Hada\-mard transform.
That is, for a non-degenerate symmetric bilinear form $\langle x, S y \rangle$, where $S$ is a non-degenerate symmetric matrix in $\F_2^{n \times n}$,
and a Boolean function $f : \F_{2^n} \To \F_2$, the Walsh Hada\-mard transform
\begin{align*}
W[S]_f(x)
&:=
\sum_{y \in \F_2^n} (-1)^{f(y) + \langle x, S y \rangle}
\end{align*}
has constant absolute value $2^{m}$ if and only if $f$ is bent as per Definition \ref{def-Bent-function} \cite{DinMTX2018cyclic}.
In this sense, given an isomorphism between $\F_2^{2m}$ and $\F_{2^{2m}}$ as vector spaces over $\F_2$,
the definition of a bent function on $\F_2^{2m}$ is equivalent to the definition on $\F_{2^{2m}}$.
\item
The fact that any symmetric matrix $S$ on $\F_2^n$ can be factorized $S=L^T L$, with the shape of $L$ depending on the rank of $S$ \cite{Lem1975matrix}
leads to the existence of a basis of $\F_{2^n}$ for which the trace form is diagonal.
This yields an explicit isomorphism between $\F_2^{2m}$ and $\F_{2^{2m}}$ as vector spaces over $\F_2$ for which the two equivalent definitions
of bent function coincide \cite[p. 860]{OlSW1975}.
\end{enumerate}
\end{remarks}

The characterization of bent functions given by Definition~\ref{def-Bent-function} immediately
implies the existence of dual functions:
\begin{Definition}
\label{def-dual-Bent-function}
For a bent function $f : \F_2^{2m} \To \F_2$, the function $\dual{f}$, defined by
\begin{align*}
(-1)^{\dual{f}(x)} &:= 2^{-m} W_f(x)
\end{align*}
is called the \Emph{dual} of $f$ \cite{CarDPS10self}.
\end{Definition}

\begin{Remark}
The function $\dual{f}$ is also a bent function on $\F_2^{2m}$ \cite[p. 427]{MacS77} \cite[p. 301]{Rot76}.
\end{Remark}

\subsection{Weights and weight classes}
\begin{Definition}
\label{def-weight}
The \Emph{Hamming weight} of a Boolean function is the cardinality of its \Emph{support} \cite[p. 8]{MacS77}.
For $f$ on $\F_2^n$
\begin{align*}
\support{f} &:= \{x \in \F_2^n \mid f(x)=1 \}, \quad \weight{f} := \abs{ \support{f} }.
\end{align*}
\end{Definition}

The remainder of this paper refers to Hamming weights simply as weights.

Since a bent function of a given dimension can have only one of two weights,
the weights can be used to define equivalence classes of bent functions 
here called \emph{weight classes}.
\begin{Definition}
\label{def-weight-class}
A bent function $f$ on $\F_2^{2m}$ has weight \cite[Theorem 6.2.10]{Dil74}
\begin{align*}
\weight{f} &= 2^{2 m - 1} - 2^{m-1} \quad (\text{weight class number~} \weightclass{f}=0),
\text{~or}
\\
\weight{f} &= 2^{2 m - 1} + 2^{m-1} \quad (\text{weight class number~} \weightclass{f}=1).
\end{align*}
\end{Definition}

\paragraph*{Weight classes and dual bent functions.}

We now note a connection between weight classes and dual bent functions that
makes it a little easier to reason about dual bent functions.
The following lemma expresses the dual bent function in terms of weight classes.
(See also MacWilliams and Sloane \cite[p. 414]{MacS77}.)
\begin{Lemma}
\label{lm-notes-9b}
For a bent function $f : \F_2^{2m} \To \F_2$, and $x \in \F_2^{2m}$,
\begin{align*}
\dual{f}(x)
&=
\weightclass{y \mapsto f(y) + \langle x, y \rangle}.
\end{align*}

\end{Lemma}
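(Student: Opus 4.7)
The plan is to unwind the two definitions and observe that they encode the same sign of the Walsh--Hadamard transform. Fix $x \in \F_2^{2m}$ and write $g_x(y) := f(y) + \langle x, y \rangle$. The Walsh--Hadamard transform $W_f(x) = \sum_{y}(-1)^{f(y)+\langle x,y\rangle} = \sum_{y}(-1)^{g_x(y)}$ is then a character sum for $g_x$, and the standard identity
\begin{align*}
\sum_{y \in \F_2^{2m}} (-1)^{g_x(y)} &= 2^{2m} - 2\,\weight{g_x}
\end{align*}
rewrites $W_f(x)$ purely in terms of $\weight{g_x}$.

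Next I would substitute into Definition~\ref{def-dual-Bent-function}: from $(-1)^{\dual{f}(x)} = 2^{-m} W_f(x)$ we get $2^m (-1)^{\dual{f}(x)} = 2^{2m} - 2\,\weight{g_x}$, hence
\begin{align*}
\weight{g_x} &= 2^{2m-1} - 2^{m-1}(-1)^{\dual{f}(x)}.
\end{align*}
A two-case check against Definition~\ref{def-weight-class} then finishes the argument: $\dual{f}(x)=0$ forces $\weight{g_x} = 2^{2m-1}-2^{m-1}$, which is weight class $0$, and $\dual{f}(x)=1$ forces $\weight{g_x} = 2^{2m-1}+2^{m-1}$, which is weight class $1$. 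In both cases $\dual{f}(x) = \weightclass{g_x}$.

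For this final identification to make sense I also need $g_x$ itself to be bent, so that $\weightclass{g_x}$ is defined. This is a small side observation: the Walsh--Hadamard transform of $g_x$ satisfies $W_{g_x}(z) = W_f(z+x)$, so it still has constant absolute value $2^m$, and bentness of $g_x$ follows immediately from Definition~\ref{def-Bent-function}. This is really the only step that is not pure arithmetic, but it is standard, so I do not expect any serious obstacle; the whole proof is essentially a careful bookkeeping of the sign $(-1)^{\dual{f}(x)}$ against the two possible weights of a bent function.
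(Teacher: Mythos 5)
Your proposal is correct and follows essentially the same route as the paper: both expand $(-1)^{\dual{f}(x)} = 2^{-m}W_f(x)$ as a character sum, convert it to $2^{-m}(2^{2m} - 2\weight{g_x})$, and then match the resulting weight against the two possibilities in Definition~\ref{def-weight-class} (the paper packages this last step as Lemma~\ref{lm-notes-9a}, while you do the two-case check directly, which is the same computation). Your added observation that $g_x$ is itself bent, via $W_{g_x}(z) = W_f(z+x)$, is a point the paper leaves implicit and is a worthwhile inclusion.
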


The proof of Lemma~\ref{lm-notes-9b} relies on the following lemma about weight classes.
\begin{Lemma}
\label{lm-notes-9a}
For a bent function $f : \F_2^{2m} \To \F_2$,
\begin{align*}
\weightclass{f}
&=
2^{-m} \weight{f} - 2^{m-1} + 2^{-1},
\intertext{so that}
\weight{f}
&=
2^{m} \weightclass{f} + 2^{2m-1} - 2^{m-1}.
\end{align*}

\end{Lemma}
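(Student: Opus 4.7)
The plan is to prove Lemma~\ref{lm-notes-9a} by direct case analysis using Definition~\ref{def-weight-class}, which tells us exactly what $\weight{f}$ is in each of the two possible weight classes. Since $\weightclass{f} \in \{0,1\}$ is just a label for which of the two allowed weights $f$ realizes, the two formulas in the statement amount to solving a linear relation between $\weight{f}$ and $\weightclass{f}$ that is consistent with the data at exactly two points. So I would not bother with any character-sum or Walsh-transform manipulation here; the work is purely arithmetic.

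First I would verify the second identity, $\weight{f} = 2^{m}\weightclass{f} + 2^{2m-1} - 2^{m-1}$. Substituting $\weightclass{f}=0$ gives $2^{2m-1}-2^{m-1}$, which matches the weight listed in the first line of Definition~\ref{def-weight-class}. Substituting $\weightclass{f}=1$ gives $2^{m} + 2^{2m-1} - 2^{m-1} = 2^{2m-1} + 2^{m-1}$, which matches the weight listed in the second line. Since these are the only two possibilities for a bent function on $\F_2^{2m}$, the identity holds in general.

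Second, I would obtain the first identity by simply solving the second one for $\weightclass{f}$. Dividing by $2^m$ gives $2^{-m}\weight{f} = \weightclass{f} + 2^{m-1} - 2^{-1}$, and rearranging yields $\weightclass{f} = 2^{-m}\weight{f} - 2^{m-1} + 2^{-1}$, as required. Alternatively one can re-check this form directly by plugging in the two allowed weights and confirming that the right-hand side evaluates to $0$ and $1$ respectively.

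There is no real obstacle; the only thing to be careful about is keeping the signs straight in the $\pm 2^{m-1}$ terms and remembering that, for a bent function, the weight is constrained to exactly these two values, so the affine formula in $\weightclass{f}$ is determined by fitting two points. The lemma will then be directly usable in the proof of Lemma~\ref{lm-notes-9b}, where the quantity $2^{-m}\weight{f}$ naturally arises from $2^{-m} W_f(x)$ applied to the translate $y \mapsto f(y) + \langle x, y\rangle$.
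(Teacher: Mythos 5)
Your proposal is correct and takes essentially the same approach as the paper: a direct two-case arithmetic check against Definition~\ref{def-weight-class}, with the two identities related by trivially solving one for the other. The paper happens to verify the first identity directly (plugging in the two allowed weights and confirming the right-hand side gives $0$ and $1$), which is the alternative you mention at the end of your second paragraph.
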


\begin{proof}
If $\weight{f} = 2^{2 m - 1} - 2^{m-1}$ then
\begin{align*}
2^{-m} \weight{f} - 2^{m-1} + 2^{-1}
&=
2^{-m} (2^{2 m - 1} - 2^{m-1}) - 2^{m-1} + 2^{-1} = 0.
\end{align*}
If $\weight{f} = 2^{2 m - 1} + 2^{m-1}$ then
\begin{align*}
2^{-m} \weight{f} - 2^{m-1} + 2^{-1}
&=
2^{-m} (2^{2 m - 1} + 2^{m-1}) - 2^{m-1} + 2^{-1} = 1.
\end{align*}
\end{proof}

\begin{proofof}{Lemma~\ref{lm-notes-9b}}
Let $h(y) := y \mapsto f(y) + \langle x, y \rangle.$
Then
\begin{align*}
(-1)^{\dual{f}(x)}
&=
2^{-m} \sum_{y \in \F_2^{2m}} (-1)^{f(y) + \langle x, y \rangle}
\\
&=
2^{-m} \left( \sum_{f(y) + \langle x, y \rangle = 0} 1 - \sum_{f(y) + \langle x, y \rangle = 1} 1
\right)
\\
&=
2^{-m} \left( 2^{2m} - 2 \weight{h} \right)
=
2^m - 2^{1-m} \weight{h}
\\
&=
2^m - 2^{1-m} (2^{m} \weightclass{h} + 2^{2m-1} - 2^{m-1})
\\
&=
1 - 2 \weightclass{h} = (-1)^{\weightclass{h}},
\end{align*}
where we have used Lemma~\ref{lm-notes-9a}.
\end{proofof}

%
%

\section{Bent functions and strongly regular graphs}
\label{sec-Bent-graphs}
This section defines the Cayley graph of a Boolean function,
and explores the relationships between bent functions, projective two-weight codes, and strongly regular graphs.
\subsection{The Cayley graph of a Bent function}

The Cayley graph of a bent function $f$ with $f(0)=0$ is defined
in terms of the Cayley graph for a general Boolean function with $f$ with $f(0)=0$.
\paragraph*{The Cayley graph of a Boolean function.}
\begin{Definition}
\label{def-Cayley-graph}
For a Boolean function $f : \F_2^{2 m} \To \F_2$, with $f(0)=0$ we consider the simple undirected
\emph{Cayley graph} $\Cay{f}$  \cite[3.1]{BerC99}
where the vertex set $V(\Cay{f}) = \F_2^{2 m}$ and for $i,j \in \F_2^{2 m}$, the edge $(i,j)$ is in
the edge set $E(\Cay{f})$ if and only if $f(i+j)=1$.
\end{Definition}
Note especially that in contrast with the paper of Bernasconi and Codenotti \cite{BerC99},
this paper defines Cayley graphs only for Boolean functions $f$ with $f(0)=0$,
since the use of Definition~\ref{def-Cayley-graph} with a function $f$ for which $f(0)=1$ would
result in a graph with loops rather than a simple graph.

\paragraph*{Bent functions and strongly regular graphs.}
We repeat below in Proposition~\ref{pr-Cayley-bent-strongly-regular}
the result of Bernasconi and Codenotti \cite{BerC99}
that the Cayley graph of a bent function is strongly regular.
The following definition is used to fix the notation used in this paper.
\begin{Definition}
\label{def-strongly-regular-graph}
A simple graph $\Gamma$ of order $v$ is \Emph{strongly regular} \cite{Bos63,BroCN89,Sei79} with
parameters
$(v,k,\lambda,\mu)$ if
\begin{itemize}
 \item
each vertex has degree $k,$
 \item
each adjacent pair of vertices has $\lambda$ common neighbours, and
\item
each nonadjacent pair of vertices has $\mu$ common neighbours.
\end{itemize}
\end{Definition}
%


The following proposition summarizes some of the well-known properties of the Cayley graphs of bent functions.
\begin{Proposition}
\label{pr-Cayley-bent-strongly-regular}
The Cayley graph $\Cay{f}$ of a bent function $f$ on $\F_2^{2m}$
with $f(0)=0$ is a strongly regular graph with $\lambda = \mu$ \cite[Lemma 12]{BerC99}.

In addition, any Boolean function $f$ on $\F_2^{2m}$ with $f(0)=0$,
whose Cayley graph $\Cay{f}$ is a strongly regular graph with $\lambda = \mu$ is a bent function
\cite[Theorem 3]{BerCV01} \cite[Theorem 3.1]{Sta07}.

For a bent function $f$ on $\F_2^{2m}$,
the parameters of $\Cay{f}$ as a strongly regular graph
are \cite[Theorem 6.2.10]{Dil74} \cite[Theorem 3.2]{HuaY04}
\begin{align*}
(v,k,\lambda,\mu) = &(4^m, 2^{2 m - 1} - 2^{m-1}, 2^{2 m - 2} - 2^{m-1}, 2^{2 m - 2} - 2^{m-1})
\\
  \text{or} \quad &(4^m, 2^{2 m - 1} + 2^{m-1}, 2^{2 m - 2} + 2^{m-1}, 2^{2 m - 2} + 2^{m-1}).
\end{align*}
\end{Proposition}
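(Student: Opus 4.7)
I would index the $v=4^m$ vertices of $\Cay{f}$ by $\F_2^{2m}$ and work with the adjacency matrix $A$ defined by $A_{i,j}=f(i+j)$ (symmetric since $f(i+j)=f(j+i)$). Because $A$ lies in the group algebra of $(\F_2^{2m},+)$, it is simultaneously diagonalised by the characters $\chi_x\colon y\mapsto(-1)^{\langle x,y\rangle}$, and the eigenvalue on $\chi_x$ is
\begin{align*}
\eta_x \;=\; \sum_{y\in\F_2^{2m}} f(y)(-1)^{\langle x,y\rangle} \;=\; \tfrac{1}{2}\bigl(2^{2m}\,[x=0]-W_f(x)\bigr),
\end{align*}
so $\eta_0=\weight{f}$ (the common degree, call it $k$) and $\eta_x=-\tfrac{1}{2}W_f(x)$ for $x\ne 0$. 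The bridge to strong regularity is the classical matrix identity: a $k$-regular simple graph is strongly regular with parameters $(v,k,\lambda,\mu)$ precisely when $A^2=kI+\lambda A+\mu(J-I-A)$; specialising to $\lambda=\mu$ this collapses to $A^2=(k-\mu)I+\mu J$.

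\textbf{Forward direction and parameters.} If $f$ is bent then $|W_f(x)|=2^m$ for every $x\ne 0$, so $\eta_x\in\{+2^{m-1},-2^{m-1}\}$ and in particular $\eta_x^2=2^{2m-2}$ on every non-principal character. Comparing the action of $A^2-2^{2m-2}I$ and $J$ on each $\chi_x$ (both annihilate $\chi_x$ for $x\ne 0$ and act as scalars on the all-ones vector $\chi_0$), I conclude $A^2=2^{2m-2}I+\mu J$ for some scalar $\mu$; reading off the coefficient of $\chi_0$ gives $\mu=k-2^{2m-2}$. This is exactly the strong-regularity identity with $\lambda=\mu$. For the numerical parameters, Definition~\ref{def-weight-class} gives $k=\weight{f}\in\{2^{2m-1}-2^{m-1},\,2^{2m-1}+2^{m-1}\}$, and then $\mu=k-2^{2m-2}$ produces the two quadruples stated.

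\textbf{Converse.} Conversely, if $\Cay{f}$ is strongly regular with $\lambda=\mu$ then $A^2=(k-\mu)I+\mu J$ forces $\eta_x^2=k-\mu$ for every $x\ne 0$, so $|W_f(x)|$ takes a common value on $\F_2^{2m}\setminus\{0\}$. Parseval's identity $\sum_x W_f(x)^2=2^{4m}$, combined with $W_f(0)=2^{2m}-2\weight{f}$ and the weight formula of Lemma~\ref{lm-notes-9a}, pins this common value (and also $|W_f(0)|$) down to $2^m$, so $f$ is bent.

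\textbf{Main obstacle.} The only real subtlety is that the usual ``three distinct eigenvalues'' characterisation of strong regularity carries a connectedness caveat (disjoint unions of complete graphs are an exceptional case), and $\Cay{f}$ is not obviously connected in general. I would sidestep this by arguing throughout at the level of the matrix identity $A^2=(k-\mu)I+\mu J$ rather than via eigenvalue-count reasoning, which makes the derivation valid regardless of connectivity. The secondary care point is the converse's final step -- upgrading ``$|W_f(x)|$ is constant off zero'' to ``$|W_f(0)|=2^m$'' as well -- but Parseval together with the weight formula handles this cleanly.
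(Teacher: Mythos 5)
The paper itself gives no proof of this Proposition: it is explicitly presented as a summary of known results, with each assertion delegated to the cited sources (Bernasconi--Codenotti, Bernasconi--Codenotti--Vanderkam, Stanica, Dillon, Huang--You), so your proposal can only be measured against the mathematics rather than against an in-paper argument. Your forward direction is the standard character-theoretic proof and is essentially sound: the $\chi_x$ diagonalise $A$, the eigenvalues are $\eta_0=\weight{f}=k$ and $\eta_x=-W_f(x)/2$, bentness gives $\eta_x^2=2^{2m-2}$ for $x\neq 0$, and hence $A^2=2^{2m-2}I+cJ$ for a scalar $c$. One presentational wrinkle: ``reading off the coefficient of $\chi_0$'' literally gives $c=(k^2-2^{2m-2})/2^{2m}$, and identifying this with $k-2^{2m-2}$ is equivalent to the weight fact $k=2^{2m-1}\pm 2^{m-1}$; the cleaner route is to read $c$ off the diagonal of $A^2$ (which equals $k$ for a $k$-regular loopless graph, forcing $c=k-2^{2m-2}$) and to obtain the weight fact afterwards from the $\chi_0$ equation. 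Either way the forward direction and the parameter quadruples come out correctly.

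The converse, however, has a genuine gap. Parseval buys you nothing: writing $s^2=k-\mu$, the identity $\sum_x W_f(x)^2=2^{4m}$ becomes $(2^{2m}-2k)^2+4(2^{2m}-1)s^2=2^{4m}$, which simplifies to $(2^{2m}-1)s^2=k(2^{2m}-k)$ --- and that is exactly $\operatorname{tr}(A^2)=2^{2m}k$, the handshake lemma for a $k$-regular graph, already implied by applying $A^2=(k-\mu)I+\mu J$ to $\chi_0$. It therefore holds for \emph{every} admissible $(k,s)$ and cannot pin $s$ down to $2^{m-1}$. Appealing to Lemma~\ref{lm-notes-9a} at this point is also circular, since that weight formula presupposes $f$ is bent. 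A concrete failure: let $f$ be the indicator of a single nonzero point of $\F_2^{2m}$, $m\geq 2$. Its Cayley graph is a perfect matching, which satisfies $A^2=(k-\mu)I+\mu J$ with $k=1$, $\lambda=\mu=0$ and meets Definition~\ref{def-strongly-regular-graph} verbatim, yet $f$ is not bent; so your argument as written ``proves'' a false statement. Indeed your chosen sidestep --- arguing only from the matrix identity to avoid the connectivity caveat --- is precisely what admits this degenerate case; for the converse the disjoint-union-of-$K_2$ case must be excluded by convention, not bypassed. The missing ingredient is integrality: the $\eta_x$ are integers, the multiplicities $\tfrac{1}{2}\bigl((2^{2m}-1)\mp k/s\bigr)$ of the eigenvalues $\pm s$ must be non-negative integers (so $s\mid k$ with $k/s$ odd), and substituting $k=st$ into $(2^{2m}-1)s^2=k(2^{2m}-k)$ gives $2^{2m}(t-s)=s(t^2-1)$, whose analysis forces $s=2^{m-1}$, $t=2^m\mp 1$ once the degenerate solution $s=t=1$ is excluded. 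That Diophantine/multiplicity step, which is how the cited sources proceed, is what your converse is missing.
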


%
\subsection{Bent functions, linear codes and strongly regular graphs}
Another well known way to obtain a strongly regular graph from a bent function is via a projective two-weight
code.
This is done via the following definitions.
\paragraph*{Projective two-weight binary codes.}

\begin{Definition}
\label{def-two-weight-codes}
\cite{BouFFWW2006, CalK1986, Del72weights, Din2015, Ton96uniformly}

A \Emph{two-weight binary code} with parameters $[n,k,d]$ is a $k$ dimensional subspace of $\F_2^n$
with
minimum Hamming distance $d$, such that the set of Hamming weights of the non-zero vectors has size
2.

Bouyukliev, Fack, Willems and Winne \cite[p. 60]{BouFFWW2006} define projective codes as follows.
``A \Emph{generator matrix} $G$ of a linear code $[n, k]$ code $C$ is any matrix
of rank $k$ (over $\F_2$) with rows from $C.$ \ldots
A linear $[n, k]$ code is called \Emph{projective} if no two columns of a generator matrix
$G$ are linearly dependent, i.e., if the columns of $G$ are pairwise different points in a
projective $(k-1)$-dimensional space.''

A \Emph{projective two-weight binary code} with parameters $[n, k, d]$ is thus a
two-weight binary code with these parameters which is also projective as an $[n, k]$ linear code.
%
%
%
\end{Definition}

\begin{Remark}
In the case of $\F_2$, no two columns of the generator matrix of a projective code are equal.
\end{Remark}

\paragraph*{From bent function to strongly regular graph via a projective two-weight code.}
There is a standard construction for obtaining a projective two-weight code from a bent function in such a way that
the code can be used to define a strongly regular graph.
This method is equivalent to the following definition.
\begin{Definition}
\label{def-boolean-linear-code}
\cite{Din2015}.
See also \cite[Definition 2A]{CalK1986}

Let $v=2^{2m}$, and let $X$ be the matrix in  $\F_2^{2m \times v}$
whose columns are the $v$ elements of $\F_2^{2m}$ in lexicographic order.
For a Boolean function $f : \F_2^{2m} \To \F_2$,
let $n=\weight{f}$, and let $Y$ be the matrix in $\F_2^{2m \times n}$
whose columns are the $n$ elements of $\support{f}$ in lexicographic order.
That is,
\begin{align*}
Y_{i,j} &= (y_j)_i, \quad \text{for~} i \in \{0,\ldots,v-1\},\ j \in \{0,\ldots,n-1\},
\end{align*}
with $y_j \in \F_2^{2m}$ being an element of $\support{f}$.

The linear code $C(f) \subset \F_2^n$ is then given by the span of the $2 m$ rows of $Y$ within $\F_2^n$,
considered as row vectors.
\end{Definition}
%

\begin{example}
In Sage, using \verb!sage.crypto.boolean_function! and
\newline
\verb!boolean_cayley_graphs.boolean_linear_code!, define:
\begin{sageblock}
from sage.crypto.boolean_function import BooleanFunction
from boolean_cayley_graphs.boolean_linear_code import (
     boolean_linear_code)
f = BooleanFunction([0,1,0,0,0,1,0,0,1,0,0,0,0,0,1,1])
dim = f.nvariables()
Cf = boolean_linear_code(dim, f)
\end{sageblock}
The linear code $Cf := C(f)$ of the Boolean function \verb!f! on $\F_2^4$ then has the following
generator matrix in echelon form:
\begin{align*}
\sage{Cf.generator_matrix().echelon_form()}.
\end{align*}
\cite{Leo16GitHub} \cite[Boolean Functions]{SageMath8418}.
\end{example}

\begin{remarks}~

\begin{enumerate}
 \item
The linear span of the rows of $Y$ is given by the set of rows of the matrix
\begin{align*}
M &:= X^T Y, \quad \text{where}
\\
X &\in \F_2^{2m \times v}
\end{align*}
is the matrix whose columns are all $v$ elements of $\F_2^{2m}$ in lexicographic order.
Thus $M \in \F_2^{v \times n}$.
 \item
Definition \ref{def-boolean-linear-code} is not identical to the generic construction described by
Ding \cite[Section III]{Din2015} since that definition is for
subsets $D \subset \F_v$, and uses the trace form, but for the case where $D$ is the support
of a Boolean function $f \colon \F_v \To F_2$, the two definitions are equivalent.

If the bilinear form $\langle x, y \rangle = x^T y$ is replaced in
the previous remark by a non-degenerate symmetric bilinear form $\langle x, S y \rangle = x^T S y$,
where $S$ is a non-degenerate symmetric matrix in $\F_2^{2m \times 2m}$,
then this would produce the same linear code, since the matrix $S$, being invertible,
produces a bijection on $\F_2^{2m}$ and therefore the matrix $X^T S$ differs from $X^T$ only by a permutation of rows.

Thus by similar arguments to those in Remarks 2 and 3 following Definition \ref{def-Bent-function},
the generic construction described by Ding \cite[Section III]{Din2015} is in this case equivalent
to Definition \ref{def-boolean-linear-code}.
\end{enumerate}
\end{remarks}

When $f$ is a bent function, the linear code $C(f)$
described by Definition \ref{def-boolean-linear-code} has the following properties.
%
\begin{Proposition}
\label{pr-bent-two-weight-code}
\cite[Corollary 10]{Din2015}

For a bent function $f : \F_2^{2m} \To \F_2$, the linear code $C(f)$
is a projective two-weight binary code, with
\begin{align*}
\begin{cases}
n = \weight{f} = 2^{2m-2} - 2^{m-2}, \quad k = 2m & \text{if~} \weightclass{f}=0.
\\
n = \weight{f} = 2^{2m-2} + 2^{m-2}, \quad k = 2m & \text{if~} \weightclass{f}=1.
\end{cases}
\end{align*}
The possible weights of non-zero code words are:
\begin{align*}
\begin{cases}
2^{2m-2}, 2^{2m-2} - 2^{m-1} & \text{if~} \weightclass{f}=0.
\\
2^{2m-2}, 2^{2m-2} + 2^{m-1} & \text{if~} \weightclass{f}=1.
\end{cases}
\end{align*}
\end{Proposition}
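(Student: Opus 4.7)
The plan is to identify the codewords of $C(f)$ explicitly and then compute their Hamming weights via the Walsh--Hadamard transform, letting the bent property collapse the spectrum to at most two values. As noted in the first remark after Definition~\ref{def-boolean-linear-code}, every codeword of $C(f)$ has the form $c_x := x^T Y$ for some $x \in \F_2^{2m}$, and its $j$-th coordinate is $\langle x, y_j \rangle$, where $y_j$ ranges over $\support{f}$ in lexicographic order. Hence $\weight{c_x}$ counts those $y \in \support{f}$ for which $\langle x, y \rangle = 1$, giving a weight enumerator that can be handled analytically.

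Using $\mathbf{1}[\langle x, y \rangle = 1] = \tfrac12(1 - (-1)^{\langle x, y \rangle})$, followed by the identity $f(y) = \tfrac12(1 - (-1)^{f(y)})$ valid for $f(y) \in \{0, 1\}$, and the orthogonality relation $\sum_{y \in \F_2^{2m}} (-1)^{\langle x, y \rangle} = 0$ for $x \neq 0$, I would derive the closed form
\[
\weight{c_x} \;=\; \frac{\weight{f}}{2} + \frac{W_f(x)}{4}, \qquad x \neq 0.
\]
Since $f$ is bent, $W_f(x) = \pm 2^m$ by Definition~\ref{def-Bent-function}, so $\weight{c_x}$ belongs to $\{\weight{f}/2 - 2^{m-2},\ \weight{f}/2 + 2^{m-2}\}$; substituting the two possible values of $\weight{f}$ from Definition~\ref{def-weight-class} and simplifying recovers exactly the two weights stated in the proposition, together with the length formula $n = \weight{f}$.

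For the dimension claim $k = 2m$, the same identity shows that $\weight{c_x} > 0$ for every $x \neq 0$ (the worst case being $\weight{f}/2 - 2^{m-2}$, which is positive for $m \geq 2$), so the linear map $x \mapsto c_x$ from $\F_2^{2m}$ to $C(f)$ is injective. Projectivity is then immediate from the construction of $Y$: its columns are the elements of $\support{f}$ listed in lexicographic order, which are pairwise distinct and all nonzero, the latter because $f(0) = 0$ forces $0 \notin \support{f}$. To confirm that $C(f)$ is genuinely two-weight (not constant-weight), I would write $W_f(x) = 2^m (-1)^{\dual{f}(x)}$ and invoke the Remark after Definition~\ref{def-dual-Bent-function} to note that $\dual{f}$ is itself bent, hence non-constant, so $W_f$ really does take both signs on $\F_2^{2m}\setminus\{0\}$. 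The only genuine obstacle I foresee is careful sign bookkeeping when passing between the codeword-weight count, the exponential sum over $\support{f}$, and $W_f$, together with treating $x = 0$ separately; once the identity $\weight{c_x} = \weight{f}/2 + W_f(x)/4$ is in hand, everything else is routine substitution or a brief appeal to results already established in Section~\ref{sec-Preliminaries}.
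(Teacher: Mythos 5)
Your proposal follows essentially the same route as the paper's proof: both derive the key identity $\weight{x^T Y} = \weight{f}/2 + W_f(x)/4$ for $x \neq 0$ by splitting the Walsh--Hadamard sum and invoking character orthogonality, then substitute $W_f(x) = \pm 2^m$ together with the two possible values of $\weight{f}$, and both obtain $k = 2m$ from the positivity of every nonzero codeword weight. The only difference is that you additionally spell out projectivity (pairwise distinctness of the columns of $Y$) and the attainment of both weights via the non-constancy of $\dual{f}$ --- points the paper's proof leaves implicit.
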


\begin{proof}
We examine $W_f$, the Walsh Hadamard transform of $f$.
\begin{align*}
W_f(x)
&=
\sum_{y \in \F_2^{2 m}} (-1)^{\langle x, y \rangle + f(y)}
=
\sum_{y \in \F_2^{2 m}} (-1)^{\langle x, y \rangle}
- 2\sum_{f(y)=1} (-1)^{\langle x, y \rangle}.
\end{align*}
But
\begin{align*}
\sum_{y \in \F_2^{2 m}} (-1)^{\langle x, y \rangle}
&=
\begin{cases}
4^m &(x=0)
\\
0 & \text{otherwise},
\end{cases}
\end{align*}
as per the Sylvester Hadamard matrices.

So, for $x \neq 0$,
\begin{align*}
W_f(x)
&=
- 2\sum_{f(y)=1} (-1)^{\langle x, y \rangle},
\intertext{so}
\sum_{f(y)=1} (-1)^{\langle x, y \rangle}
&=
\weight{f} - 2 \sum_{\substack{f(y)=1 \\ \langle x, y \rangle =1}} 1
=
- W_f(x)/2.
\intertext{But}
\sum_{\substack{f(y)=1 \\ \langle x, y \rangle =1}} 1
&=
\weight{x^T Y},
\end{align*}
the weight of the code word of $C(f)$ corresponding to $x$.
So
\begin{align*}
\weight{f} - 2 \weight{x^T Y}
&=
- W_f(x)/2,
\intertext{and therefore}
\weight{x^T Y}
&=
\weight{f}/2 + W_f(x)/4.
\end{align*}
We now examine the two possible weight class numbers of $f$.

If $\weightclass{f} = 0$ then $\weight{f} = 2^{2m-1}-2^{m-1}$.
For $x \neq 0$ there are two cases, depending on $\dual{f}(x)$:

If $\dual{f}(x) = 0$ then $W_f(x) = 2^m$, so
\begin{align*}
\weight{x^T Y}
&=
2^{2m-2} - 2^{m-2} + 2^{m-2}
=
2^{2m-2}.
\end{align*}

If $\dual{f}(x) = 1$ then $W_f(x) = -2^m$, so
\begin{align*}
\weight{x^T Y}
&=
2^{2m-2} - 2^{m-2} - 2^{m-2}
=
2^{2m-2} - 2^{m-1}.
\end{align*}

Similarly, if $\weightclass{f} = 1$ then $\weight{f} = 2^{2m-1}+2^{m-1}$,
and so for $x \neq 0$
\begin{align*}
\weight{x^T Y}
&=
\begin{cases}
2^{2m-2} + 2^{m-1} & (\dual{f}(x)=0)
\\
2^{2m-2}           & (\dual{f}(x)=1).
\end{cases}
\end{align*}
As a consequence, all $v = 2^{2m}$ rows of $M = X^T Y$ are distinct,
since the rows of $M$ consist of every linear combination of the rows of $Y$,
every linear combination of the rows of $Y$ is given by some $x^T Y$ where $x$ is a column of $X$,
and for every non-zero $x$, this linear combination has positive weight and is therefore non-zero.
Thus the dimension of $C(f)$ as a subspace of $F_2^n$ is $k=2m$.
\end{proof}

%
\paragraph*{From linear code to strongly regular graph.}
This paper uses the following non-standard definition to obtain a strongly regular graph from a
projective two-weight code.
\begin{Definition}
\label{def-R-f}
Given $f : \F_2^{2m} \To \F_2$, and linear code $C(f)$ defined as per Definition~\ref{def-boolean-linear-code},
define the graph $R(f)$ as follows.

Vertices of $R(f)$ are code words of $C(f)$.

For code words $v,w \in C(f)$, edge $(u,v) \in R(f)$ if and only if
\begin{align*}
\begin{cases}
\weight{u+v} = 2^{2m-2} - 2^{m-1} & (\text{if~}\weightclass{f}=0).
\\
\weight{u+v} = 2^{2m-2} + 2^{m-1} & (\text{if~}\weightclass{f}=1).
\end{cases}
\end{align*}

\end{Definition}
\begin{remarks}~

\begin{enumerate}
 \item
The standard definition uses the lower of the two weights in both cases above.
 \item
Since $C(f)$ is a projective two-weight binary code,
$R(f)$ is a strongly regular graph \cite[Theorem 2]{Del72weights} \cite[Theorem 16.22]{CamVL91}.
\end{enumerate}
\end{remarks}



\paragraph*{The graph $R(f)$ is the Cayley graph of the extended dual.}
The strongly regular graph $R(f)$ of bent function $f$, as per
\ref{def-R-f} has the following remarkable property.

\begin{Theorem}
For a bent function $f : \F_2^{2m} \To \F_2$, with $f(0)=0$,
\begin{align*}
R(f) \equiv \Cay{\dual{f} + \weightclass{f}}.
\end{align*}

\end{Theorem}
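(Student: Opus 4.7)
The plan is to exhibit an explicit graph isomorphism $\varphi : \F_2^{2m} \to V(R(f))$ and check that it preserves adjacency, with the edge condition translating precisely into the Cayley edge condition for $\dual{f}+\weightclass{f}$.

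First I would define $\varphi(x) := x^T Y$, the row of $M = X^T Y$ indexed by $x$. From the proof of Proposition~\ref{pr-bent-two-weight-code} we already know that the $2^{2m}$ rows of $M$ are all distinct, so $\varphi$ is a bijection from $\F_2^{2m}$ onto the $2^{2m}$ code words of $C(f)$. Since $\varphi$ is $\F_2$-linear, $\varphi(x) + \varphi(y) = \varphi(x+y)$, so the weight of the sum of two code words depends only on $z := x+y$ via $\weight{\varphi(x)+\varphi(y)} = \weight{z^T Y}$.

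Next I would recycle the weight computation from the proof of Proposition~\ref{pr-bent-two-weight-code}, namely $\weight{z^T Y} = \weight{f}/2 + W_f(z)/4$ for $z \neq 0$, together with $W_f(z) = 2^m (-1)^{\dual{f}(z)}$. Splitting by weight class:
\begin{align*}
\weightclass{f}=0:\quad \weight{z^T Y} &= \begin{cases} 2^{2m-2} & (\dual{f}(z)=0) \\ 2^{2m-2}-2^{m-1} & (\dual{f}(z)=1), \end{cases}
\\
\weightclass{f}=1:\quad \weight{z^T Y} &= \begin{cases} 2^{2m-2}+2^{m-1} & (\dual{f}(z)=0) \\ 2^{2m-2} & (\dual{f}(z)=1). \end{cases}
\end{align*}
Comparing with Definition~\ref{def-R-f}, an edge $(\varphi(x),\varphi(y))$ lies in $R(f)$ if and only if $z=x+y \neq 0$ and $\dual{f}(z) = 1 - \weightclass{f}$, which is the same as $(\dual{f}+\weightclass{f})(z) = 1$.

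Finally, to make sense of $\Cay{\dual{f}+\weightclass{f}}$ I would verify that the function $\dual{f}+\weightclass{f}$ vanishes at $0$, which is needed by Definition~\ref{def-Cayley-graph}. This follows from the observation that $W_f(0) = 2^{2m} - 2\weight{f}$, so by Lemma~\ref{lm-notes-9a} we have $(-1)^{\dual{f}(0)} = 2^{-m}W_f(0) = (-1)^{\weightclass{f}}$, giving $\dual{f}(0) = \weightclass{f}$; this also handles the otherwise-problematic $z=0$ case in the edge test. Then $\varphi$ sends the edge set of $\Cay{\dual{f}+\weightclass{f}}$ bijectively onto the edge set of $R(f)$, completing the isomorphism. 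The only mildly subtle step is this last bookkeeping at $z=0$ together with the sign convention for $\dual{f}$; everything else is an immediate consequence of linearity of $\varphi$ and the weight formula already derived in Proposition~\ref{pr-bent-two-weight-code}.
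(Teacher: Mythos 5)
Your proposal is correct and follows essentially the same route as the paper's proof: both reuse the weight formula $\weight{x^T Y} = \weight{f}/2 + W_f(x)/4$ and the resulting case analysis from Proposition~\ref{pr-bent-two-weight-code}, together with the observation $\dual{f}(0)=\weightclass{f}$, to match the edge condition of $R(f)$ with the Cayley condition for $\dual{f}+\weightclass{f}$. The only difference is that you make the isomorphism $x \mapsto x^T Y$ and its linearity explicit, which the paper leaves implicit; this is a welcome clarification rather than a new argument.
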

\begin{proof}
As a consequence of Lemma~\ref{lm-notes-9b}, $\weightclass{f} = \dual{f}(0)$,
so if $g(x) := \dual{f}(x) + \weightclass{f}$ then $g(0)=0$ and therefore the Cayley graph of $g$
is well defined.

From the proof of Proposition~\ref{pr-bent-two-weight-code}:

If $\weightclass{f} = 0$ then for $x \neq 0$
\begin{align*}
\weight{x^T Y}
&=
\begin{cases}
2^{2m-2}           & (\dual{f}(x)=0)
\\
2^{2m-2} - 2^{m-1} & (\dual{f}(x)=1).
\end{cases}
\end{align*}
Thus for $x \neq 0$, if $\dual{f}(x) + \weightclass{f} = 1$ if and only if $\weight{x^T Y} = 2^{2m-2} - 2^{m-1}$.

If $\weightclass{f} = 1$ then for $x \neq 0$
\begin{align*}
\weight{x^T Y}
&=
\begin{cases}
2^{2m-2} + 2^{m-1} & (\dual{f}(x)=0)
\\
2^{2m-2}           & (\dual{f}(x)=1).
\end{cases}
\end{align*}
Thus for $x \neq 0$, if $\dual{f}(x) + \weightclass{f} = 1$ if and only if $\weight{x^T Y} = 2^{2m-2} + 2^{m-1}$.

Thus in both cases, $R(f)$ as per Definition~\ref{def-R-f} is isomorphic to the Cayley graph $\Cay{\dual{f} + \weightclass{f}}$.
\end{proof}

\section{Equivalence of bent functions}
\label{sec-Equivalence}
The following concepts of equivalence of Boolean functions are used in this paper,
usually in the case where the Boolean functions are bent.

\paragraph*{Extended affine equivalence.}

\begin{Definition}
For Boolean functions $f,g : \F_2^n \To \F_2$,
$f$ is \Emph{extended affine equivalent} to $g$ \cite[Section 1.4]{Tok15bent} if and only if
\begin{align*}
g(x) &= f(A x + b) + \langle c, x \rangle + \delta
\end{align*}
for some $A \in GL(n,2)$, $b, c \in \F_2^n$, $\delta \in \F_2$.
\end{Definition}

The Boolean function $f$ is extended affine equivalent to $g$
if and only if $f$ and $g$ are in the same orbit
of the action of the \Emph{extended general affine group} $EGA(n, 2)$ on $\F_2^{\F_2^n}$, defined as follows.

\begin{Definition}
\begin{align*}
&EGA(n, 2) := \{ (A,b,c,\delta) \mid A \in GL(n,2),\ b, c \in \F_2^n,\ \delta \in \F_2 \}
\intertext{with}
&(A,b,c,\delta)(A',b',c',\delta') := (A A', A b' + b, A'^T c + c', \langle c, b' \rangle + \delta + \delta'),
\end{align*}
with action
\begin{align*}
(A,b,c,\delta)f(x) &:= f(A x + b) + \langle c, x \rangle + \delta,
\\
\left( (A,b,c,\delta)(A',b',c',\delta') f \right)
& := (A',b',c',\delta') \circ (A,b,c,\delta) f
\\
& = (A',b',c',\delta') \left( (A,b,c,\delta) f \right).
\end{align*}
\cite[Section 2]{Mai91}
\end{Definition}

\begin{Proposition}
\label{prop-EA-class-properties}
The extended affine (EA) equivalence classes of the Boolean functions $\F_2^n \To \F_2$,
that is, the orbits of these functions under $EGA(n, 2)$,
have the following well known and easily verified properties.
\begin{enumerate}
 \item
For a given $f \colon \F_2^n \To \F_2$,
the $2^{n+1}$ functions $x \mapsto f(x) + \langle c, x \rangle + \delta$ are all distinct.
Thus the EA equivalence class of $f$ consists of some number of complete cosets of the Reed-Muller code $RM(1,n)$
described in Section \ref{sec-Boolean-functions}.
 \item
Each general affine transformation $(A,b)f(x) := f(A x + b)$ preserves cosets of the Reed-Muller code $RM(1,n)$
in the sense that $(A,b)$ maps $f + RM(1,n)$ to $g + RM(1,n)$ where $g(x) = f(A x + b)$.
\end{enumerate}
\end{Proposition}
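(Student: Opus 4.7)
The plan is to prove the two items in sequence, using nothing beyond the definitions of $EGA(n,2)$, its action, and $RM(1,n)$ from Section~\ref{sec-Boolean-functions}.

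For item~1, the main point is injectivity of the map $(c,\delta)\mapsto f+\langle c,\cdot\rangle+\delta$. Suppose two pairs $(c,\delta),(c',\delta')$ yield the same function; then $\langle c-c',x\rangle=\delta-\delta'$ for every $x\in\F_2^n$. Setting $x=0$ gives $\delta=\delta'$, and the remaining identity $\langle c-c',x\rangle=0$ for all $x$ forces $c=c'$ by the non-degeneracy of the standard bilinear form on $\F_2^n$. Hence the $2^{n+1}$ functions $x\mapsto f(x)+\langle c,x\rangle+\delta$ are pairwise distinct, and together they are precisely the coset $f+RM(1,n)$, since $RM(1,n)$ is the set of all affine functions $\langle c,\cdot\rangle+\delta$. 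Each of these functions equals $(I,0,c,\delta)\cdot f$, hence lies in the EA orbit of $f$. Applying the same argument to $g=(A,b,c,\delta)\cdot f$ shows the EA orbit is closed under adding any affine function, so it is a union of complete $RM(1,n)$-cosets.

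For item~2, take $h\in f+RM(1,n)$, so $h(x)=f(x)+\langle c,x\rangle+\delta$ for some $c\in\F_2^n$, $\delta\in\F_2$. Then
\begin{align*}
((A,b)h)(x)
&= f(Ax+b)+\langle c,Ax+b\rangle+\delta \\
&= g(x)+\langle A^{T}c,x\rangle+\bigl(\langle c,b\rangle+\delta\bigr),
\end{align*}
which lies in $g+RM(1,n)$. Thus $(A,b)$ sends the coset $f+RM(1,n)$ into $g+RM(1,n)$. To see that the map is onto (hence a bijection of cosets), note that $(A,b)$ is invertible with inverse $(A^{-1},A^{-1}b)$, and $(A^{-1},A^{-1}b)\cdot g=f$ by direct substitution, so the same computation shows $(A^{-1},A^{-1}b)$ maps $g+RM(1,n)$ into $f+RM(1,n)$.

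There is no real obstacle, only one point to watch: the computation in item~2 must use that $c'=A^{T}c$ ranges over all of $\F_2^n$ as $c$ does (because $A\in GL(n,2)$), and that $\delta'=\langle c,b\rangle+\delta$ ranges over all of $\F_2$ as $\delta$ does. This ensures the image fills the whole coset $g+RM(1,n)$ rather than a proper subset, and it is also what makes the transformation law for the $(c,\delta)$-part in the composition rule for $EGA(n,2)$ consistent with the action, which is the only place some care is needed.
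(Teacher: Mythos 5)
Your proof is correct. The paper states this Proposition without proof, describing the properties as ``well known and easily verified'' and merely pointing to MacWilliams--Sloane and Maiorana, so there is no argument in the paper to compare against; your verification (injectivity of $(c,\delta)\mapsto f+\langle c,\cdot\rangle+\delta$ via evaluation at $x=0$ and non-degeneracy of the form, and the computation $\langle c, Ax+b\rangle = \langle A^T c, x\rangle + \langle c,b\rangle$ with invertibility of $A$ giving a bijection of cosets) is exactly the standard one the author intends, and it correctly supplies the omitted details.
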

See also MacWilliams and Sloane \cite[Ch. 13]{MacS77}, and Maiorana \cite{Mai91}.
\paragraph*{General linear equivalence.}
\begin{Definition}
For Boolean functions $f,g : \F_2^n \To \F_2$,
$f$ is \Emph{general linear equivalent} to $g$ if and only if
\begin{align*}
g(x) &= f(A x)
\end{align*}
for some $A \in GL(n,2)$.
\end{Definition}

Thus $f$ is general linear equivalent to $g$
if and only if $f$ and $g$ are in the same orbit
of the action of the general linear group $GL(n, 2)$ on $\F_2^{\F_2^n}$, defined as follows.

\begin{Definition}
\begin{align*}
A f(x) &:= f(A x),
\\
(A A') f & := A' \circ A f = A' (A f).
\end{align*}
\end{Definition}
Some references for the study of the general linear equivalence of Boolean functions
include Harrison \cite{Har64}, Comerford \cite{Com80}, and Maiorana \cite[Section 2]{Mai91}.
\paragraph*{Extended translation equivalence.}

\begin{Definition}
For Boolean functions $f,g : \F_2^n \To \F_2$,
$f$ is \Emph{extended translation equivalent} to $g$ if and only if
\begin{align*}
g(x) &= f(x + b) + \langle c, x \rangle + \delta
\end{align*}
for $b, c \in \F_2^n$, $\delta \in \F_2$.
\end{Definition}

Thus $f$ is extended translation equivalent to $g$
if and only if $f$ and $g$ are in the same orbit
of the action of the \Emph{extended translation group} $ET(n, 2)$ on $\F_2^{\F_2^n}$, defined as follows.

\begin{Definition}
\begin{align*}
&ET(n, 2) := \{ (b,c,\delta) \mid \ b, c \in \F_2^n,\ \delta \in \F_2 \}
\intertext{with}
&(b,c,\delta)(b',c',\delta') := (b' + b, c + c', \langle c, b' \rangle + \delta + \delta'),
\end{align*}
with action
\begin{align*}
(b,c,\delta)f(x) &:= f(x + b) + \langle c, x \rangle + \delta,
\\
\left( (b,c,\delta)(b',c',\delta') \right) f
& := (b',c',\delta') \circ (b,c,\delta) f
\\
& = (b',c',\delta') \left( (b,c,\delta) f \right).
\end{align*}
\end{Definition}

\paragraph*{Cayley equivalence.}
\begin{Definition}
For Boolean functions $f, g : \F_2^n \To \F_2$, with $f(0)=g(0)=0$,
we call $f$ and $g$ \Emph{Cayley equivalent},
and write $f \equiv g$,
if and only if the graphs $\Cay{f}$ and $\Cay{g}$ are isomorphic.

Equivalently, $f \equiv g$ if and only if
there exists a bijection $\pi : \F_2^n \To \F_2^n$ such that
\begin{align*}
g(x+y) &= f \big(\pi(x)+\pi(y)\big) \quad \text{for all~} x,y \in \F_2^n.
\end{align*}
\end{Definition}
Remark: Note that the bijection $\pi$ is not necessarily linear on $\F_2^n$.
Examples of bent functions $f$ and $g$ where $f \equiv g$ but the bijection is not linear
are given in Section~\ref{sec-Empirical}.
\paragraph*{Extended Cayley equivalence.}
While Bernasconi and Codenotti \cite{BerC99} define Cayley graphs for Boolean functions with
$f(0)=1$ and allow Cayley graphs to have loops, this paper defines Cayley
graphs only for Boolean functions where $f(0)=0$.
This has the disadvantage that Cayley equivalence is an equivalence relation on half
of the Boolean functions rather than all of them.
To extend this equivalence relation to all Boolean functions,
we just declare the functions $f$ and $f+1$ to be ``extended'' Cayley equivalent,
resulting in the following definition.
\begin{Definition}
For Boolean functions $f, g : \F_2^n \To \F_2$,
if there exist $\delta, \epsilon \in \{0,1\}$ such that $f + \delta \equiv g + \epsilon$,
we call $f$ and $g$ \Emph{extended Cayley (EC) equivalent} and write $f \cong g$.
\end{Definition}
Extended Cayley equivalence is thus an equivalence relation on the set of all Boolean functions on
$\F_2^n$.
It is easy to verify that $f \cong g$ if and only if $f+f(0) \equiv g+g(0)$.


\subsection{Relationships between different concepts of equivalence}

As stated in the Introduction, in order to classify bent functions by their Cayley graphs,
it helps to understand the relationship between Cayley equivalence and other concepts of equivalence
of Boolean functions, especially if this helps to cut down the search space needed for the classification.
This section lists a few of these useful relationships.


\paragraph*{General linear equivalence implies Cayley equivalence.}

Firstly, general linear equivalence of Boolean functions implies Cayley equivalence.
Specifically, the following result applies.
\begin{Theorem}
\label{th-Linear-Cayley}
If $f$ is a Boolean function with $f(0)=0$ and $g(x) := f(A x)$ where $A \in GL(n,2)$,
then $f \equiv g$.
\end{Theorem}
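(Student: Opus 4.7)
The plan is to exhibit an explicit graph isomorphism between $\Cay{f}$ and $\Cay{g}$ by using the linear map $A$ itself as the vertex bijection, and then check edge preservation using linearity.

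First, I would verify that $\Cay{g}$ is actually well-defined as a simple graph under Definition~\ref{def-Cayley-graph}: since $g(0) = f(A \cdot 0) = f(0) = 0$, the hypothesis $g(0)=0$ holds. Both $\Cay{f}$ and $\Cay{g}$ therefore have the common vertex set $V = \F_2^n$.

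Next, I would take $\pi : \F_2^n \To \F_2^n$ defined by $\pi(x) := A x$. Since $A \in GL(n,2)$, the map $\pi$ is a bijection on $\F_2^n$. To check that $\pi$ maps edges of $\Cay{g}$ to edges of $\Cay{f}$, I would use linearity: for any $x,y \in \F_2^n$,
\begin{align*}
f\bigl(\pi(x) + \pi(y)\bigr)
&= f(A x + A y) = f\bigl(A(x+y)\bigr) = g(x+y).
\end{align*}
Therefore $(x,y) \in E(\Cay{g})$ (i.e.\ $g(x+y) = 1$) if and only if $(\pi(x),\pi(y)) \in E(\Cay{f})$, so $\pi$ is a graph isomorphism and $f \equiv g$ in the sense of the alternative characterization of Cayley equivalence given in Section~\ref{sec-Equivalence}.

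There is no real obstacle here; the entire content of the theorem is that the Cayley graph construction depends only on the additive structure of $\F_2^n$, so any $\F_2$-linear automorphism automatically descends to a graph automorphism relating $\Cay{f}$ and $\Cay{f \circ A}$. The only subtlety worth flagging in the writeup is the distinction between the bijection $\pi$ being linear (as it is here) versus the more general nonlinear bijections allowed in the definition of Cayley equivalence — a point already highlighted in the remark following the definition and illustrated by the examples promised in Section~\ref{sec-Empirical}.
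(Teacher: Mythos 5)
Your proposal is correct and is essentially the paper's own proof: the paper's one-line argument is exactly the identity $g(x+y) = f\bigl(A(x+y)\bigr) = f(Ax+Ay)$ with $\pi(x) := Ax$ as the implicit bijection. Your additional check that $g(0)=0$ is a sensible bit of housekeeping the paper leaves implicit, but it does not change the route.
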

\begin{proof}
\begin{align*}
g(x+y) &= f\big(A(x+y)\big) = f(A x + A y)\quad \text{for all~} x,y \in \F_2^n.
\end{align*}
\end{proof}
Thus, for bent functions, the following result holds.
\begin{Corollary}
\label{corr-bent-Linear-Cayley}
If $f$ is bent with $f(0)=0$ and $g(x) := f(A x)$ where $A \in GL(n,2)$,
then $g$ is bent with $g(0)=0$ and $f \equiv g$.
\end{Corollary}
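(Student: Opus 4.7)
The plan is to observe that this corollary is almost immediate given Theorem~\ref{th-Linear-Cayley}, so the only real work is verifying the two new conditions on $g$, namely that $g(0)=0$ and that $g$ is bent.

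First I would dispatch the easy piece: $g(0) = f(A \cdot 0) = f(0) = 0$, since $A$ is linear. This then makes the Cayley graph $\Cay{g}$ well defined in the sense of Definition~\ref{def-Cayley-graph}, so that the conclusion $f \equiv g$ is meaningful, and this conclusion follows directly from Theorem~\ref{th-Linear-Cayley} without any further argument.

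The bent property of $g$ is where the only nontrivial calculation lives, and the standard trick is a change of variable inside the Walsh Hadamard transform. Writing
\begin{align*}
W_g(x) &= \sum_{y \in \F_2^n} (-1)^{g(y) + \langle x, y \rangle} = \sum_{y \in \F_2^n} (-1)^{f(Ay) + \langle x, y \rangle},
\end{align*}
I would substitute $z = Ay$; since $A \in GL(n,2)$ this is a bijection on $\F_2^n$, and $\langle x, y \rangle = \langle x, A^{-1} z \rangle = \langle (A^{-1})^T x, z \rangle$. Hence $W_g(x) = W_f\bigl((A^{-1})^T x\bigr)$, whose absolute value is constantly $2^m$ because $f$ is bent, so $g$ is bent by Definition~\ref{def-Bent-function}.

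I do not expect any obstacle; the only minor care needed is to make sure the bilinear form $\langle \cdot, \cdot \rangle$ transports correctly under the linear substitution, i.e.\ that the transpose $(A^{-1})^T$ appears in the right place, and to note that invariance of bent-ness under this substitution is exactly what Remark~2 following Definition~\ref{def-Bent-function} anticipates in a more general setting. With $g(0)=0$ and $g$ bent established, the Cayley equivalence $f \equiv g$ is just a restatement of Theorem~\ref{th-Linear-Cayley}.
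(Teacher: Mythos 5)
Your proof is correct and follows essentially the route the paper intends: the paper states this corollary as an immediate consequence of Theorem~\ref{th-Linear-Cayley} and gives no separate proof, implicitly relying on the standard fact that bentness is preserved under general linear equivalence (the same change-of-variable computation $W_g(x) = W_f\bigl((A^{-1})^T x\bigr)$ that underlies Proposition~\ref{prop-dual-linear-equivalence}). Your write-up simply makes explicit the two details the paper leaves unstated, namely $g(0)=0$ and the invariance of the Walsh Hadamard modulus, and both are handled correctly.
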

Thus if $f$ is bent with $f(0)=0$, and $g$ is bent with $g(0)=0$, and $f \not\equiv g$,
then $f$ is not general linear equivalent to $g$.
This result immediately leads to another corollary.
Here, and later in this paper, we make use of the following terminology.
\begin{Definition}
A Boolean function $f \colon \F_2^n \To \F_2$ is said to be \Emph{prolific} if
there is no pair $b, c \in \F_2^n$ with $g(x) = f(x+b) + \langle c, x \rangle + f(b)$ such that $f \cong g$.
Thus the number of extended Cayley classes in the extended translation class of a prolific Boolean function
is $4^n$.
\end{Definition}

\begin{Corollary}
 \label{corr-no-Cayley-no-Linear}
If $f \colon \F^{2m} \To \F_2$ is bent with $f(0)=0$, and $f$ is prolific, then there is no triple
$A \in GL(2m,2)$, $b, c \in \F_2^{2m}$ with $f(Ax) = f(x+b) + \langle c, x \rangle + f(b)$.
\end{Corollary}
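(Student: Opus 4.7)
The plan is to argue by contradiction: suppose a triple $(A, b, c)$ as in the statement exists (with $(b,c)$ nontrivial, which is the intended content of the ``prolific'' hypothesis). I will then use Corollary~\ref{corr-bent-Linear-Cayley} to pass from the equation $f(Ax) = f(x+b) + \langle c, x\rangle + f(b)$ to an extended Cayley equivalence, and read off a direct contradiction with the definition of prolific.

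The key steps, in order, are as follows. First, I would define
\[
g(x) := f(x+b) + \langle c, x\rangle + f(b),
\]
and observe that $g(0) = f(b) + 0 + f(b) = 0$, so that the Cayley graph $\Cay{g}$ is well defined under the convention of Definition~\ref{def-Cayley-graph}. By hypothesis, $g(x) = f(Ax)$ for all $x \in \F_2^{2m}$. Next, I would invoke Corollary~\ref{corr-bent-Linear-Cayley}: since $f$ is bent with $f(0)=0$ and $A \in GL(2m,2)$, the function $x \mapsto f(Ax)$ is bent, vanishes at $0$, and is Cayley equivalent to $f$. Substituting the identity $g = f \circ A$, this gives $g \equiv f$, and in particular $f \cong g$ as extended Cayley classes. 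Finally, $g$ has precisely the form of a nontrivial extended translation of $f$, so the extended Cayley equivalence $f \cong g$ contradicts the prolific hypothesis, which asserts that no such $(b,c)$ produces a function in the same EC class as $f$.

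The main obstacle is really only a matter of interpreting the definition rather than a piece of mathematics: the literal reading of ``there is no pair $b, c$ with $g(x) = f(x+b) + \langle c, x\rangle + f(b)$ such that $f \cong g$'' would be vacuously violated by $(b,c)=(0,0)$, which gives $g = f$. The intended reading, consistent with the subsequent sentence that the ET class of a prolific $f$ carries exactly $4^n$ EC classes, is that no \emph{nontrivial} pair $(b,c)$ produces $g \cong f$; equivalently, the only identifications within the extended translation orbit of $f$ come from the pairing $g \leftrightarrow g+1$. Once this convention is fixed, the corollary is immediate from Corollary~\ref{corr-bent-Linear-Cayley}, and no further calculation is required.
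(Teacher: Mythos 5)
Your argument is exactly the one the paper intends: the paper offers no explicit proof, presenting the corollary as an immediate consequence of Corollary~\ref{corr-bent-Linear-Cayley} together with the definition of prolific, and your chain $g := f(\cdot+b)+\langle c,\cdot\rangle+f(b) = f\circ A \Rightarrow g \equiv f \Rightarrow f \cong g \Rightarrow$ contradiction is precisely that implicit argument spelled out. Your observation that both the definition of prolific and the corollary must be read as excluding the trivial pair $(b,c)=(0,0)$ (equivalently, that ``prolific'' means the ET class yields $4^n$ distinct extended Cayley classes) is a fair and correct reading of the paper's intent, not a gap in your proof.
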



\paragraph*{Extended affine, translation, and Cayley equivalence.}

Secondly, if $f$ is a Boolean function,
and $h$ is a Boolean function $h$ that is extended affine equivalent to $f$,
then a Boolean function $g$ exists that is general linear equivalent to $h$
and extended translation equivalent to $f$:
\begin{Theorem}
\label{th-Affine-Translate-Linear}
For $A \in GL(n,2)$, $b, c \in \F_2^n$, $\delta \in \F_2$,
$f : \F_2^n \To \F_2$,
the function
\begin{align*}
h(x) &:= f(A x + b) + \langle c, x \rangle + \delta
\intertext{can be expressed as $h(x) = g(A x)$ where}
g(x) &:= f(x+b) + \langle (A^{-1})^T c, x \rangle + \delta.
\end{align*}
\end{Theorem}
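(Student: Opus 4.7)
The plan is a direct verification: substitute $Ax$ into the definition of $g$ and simplify using the adjoint property of the standard bilinear form.

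First I would write out
\begin{align*}
g(Ax) &= f(Ax + b) + \langle (A^{-1})^T c,\, Ax \rangle + \delta,
\end{align*}
simply by replacing the argument $x$ in the definition of $g$ with $Ax$. The first and last terms already match those of $h(x)$, so the whole claim reduces to showing that the middle term equals $\langle c, x \rangle$.

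Second, I would invoke the adjoint identity for the standard symmetric bilinear form $\langle u, v \rangle = \sum_i u_i v_i$ on $\F_2^n$, namely $\langle M^T u, v \rangle = \langle u, Mv \rangle$ for any $M \in \F_2^{n \times n}$. Applying this with $M = A^{-1}$ and $u = c$, $v = Ax$ gives
\begin{align*}
\langle (A^{-1})^T c,\, Ax \rangle
&= \langle c,\, A^{-1}(Ax) \rangle
= \langle c, x \rangle,
\end{align*}
where the last step uses $A^{-1}A = I$. Substituting back yields $g(Ax) = f(Ax + b) + \langle c, x \rangle + \delta = h(x)$, as required. Note that $A$ being invertible (i.e.\ lying in $GL(n,2)$) is exactly what is needed to make $(A^{-1})^T$ well-defined and to get the cancellation $A^{-1}A = I$.

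There is no real obstacle here; the proof is essentially a one-line computation in which the only content is the adjoint identity for the bilinear form. The only thing to take a moment over is the bookkeeping of transposes and inverses to ensure that $(A^{-1})^T c$ is the correct choice in the definition of $g$ (as opposed to, say, $A^T c$ or $(A^T)^{-1} c$, which would be the same object since $(A^{-1})^T = (A^T)^{-1}$, but the expression given in the theorem is the natural one arising from the adjoint calculation above).
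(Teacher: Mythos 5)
Your proof is correct and follows essentially the same route as the paper's: both substitute $Ax$ into $g$ and use the adjoint identity $\langle (A^{-1})^T c, Ax \rangle = \langle c, A^{-1}Ax \rangle = \langle c, x \rangle$ for the standard bilinear form. No differences worth noting.
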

\begin{proof}
Let $y:= A x$. Then
\begin{align*}
g(A x) = g(y)
&= f(y+b) + \langle (A^{-1})^T c, y \rangle + \delta
\\
&= f(y+b) + \langle c, A^{-1} y \rangle + \delta
\\
&= f(A x + b) + \langle c, x \rangle + \delta = h(x).
\end{align*}
\end{proof}

\begin{Corollary}
\label{corr-Affine-Translate-Cayley}
If $f$ is a bent Boolean function,
and a bent function $h$ is extended affine equivalent to $f$,
then a bent function $g$ can be found that is extended Cayley equivalent to $h$
and extended translation equivalent to $f$.
\end{Corollary}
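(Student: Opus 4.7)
The plan is to build $g$ directly from the previous theorem and then verify the three required properties: bentness, extended translation equivalence to $f$, and extended Cayley equivalence to $h$. Since $h$ is extended affine equivalent to $f$, write $h(x) = f(Ax+b) + \langle c, x\rangle + \delta$ for some $A \in GL(n,2)$, $b,c \in \F_2^n$, $\delta \in \F_2$, and then take
\begin{align*}
g(x) &:= f(x+b) + \langle (A^{-1})^T c, x\rangle + \delta,
\end{align*}
exactly as in Theorem~\ref{th-Affine-Translate-Linear}, so that $h(x) = g(Ax)$.

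By this definition, $g$ is extended translation equivalent to $f$ with parameters $(b, (A^{-1})^T c, \delta)$. Bentness of $g$ then follows because extended translation equivalence preserves the constant absolute value of the Walsh Hadamard transform (translating $x \mapsto x+b$ merely permutes the summands in Definition~\ref{def-Walsh-Hadamard-transform}, adding $\langle c', x\rangle$ multiplies $W_g$ by a sign character and shifts its argument, and adding $\delta$ flips the sign).

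For the extended Cayley equivalence, the only subtlety is that $g(0)$ need not be $0$, so we must pass to the translates that vanish at the origin. A direct calculation shows $g(0) = f(b) + \delta = h(0)$, so setting $g'(x) := g(x) + g(0)$ and $h'(x) := h(x) + h(0)$ gives two Boolean functions that both vanish at $0$, and the identity $h(x) = g(Ax)$ plus $g(0)=h(0)$ yields
\begin{align*}
h'(x) &= h(x) + h(0) = g(Ax) + g(0) = g'(Ax).
\end{align*}
Theorem~\ref{th-Linear-Cayley} applied to $g'$ and $h'$ (which are bent, vanish at $0$, and differ by precomposition with $A \in GL(n,2)$) then gives $g' \equiv h'$, and hence $g \cong h$ by the definition of extended Cayley equivalence.

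There is no real obstacle here; the work has been done in Theorem~\ref{th-Affine-Translate-Linear} and Theorem~\ref{th-Linear-Cayley}. The mildly delicate point worth writing out carefully is the bookkeeping of the constants: one has to notice that $g(0)$ and $h(0)$ agree (which is why extended rather than plain Cayley equivalence is exactly the right notion here), and that the correction by $g(0)$ commutes with the linear substitution $x \mapsto Ax$, so that the linking identity $h' = g' \circ A$ survives intact.
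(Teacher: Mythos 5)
Your proposal is correct and follows the same route as the paper: construct $g$ via Theorem~\ref{th-Affine-Translate-Linear} so that $h = g \circ A$, then apply Theorem~\ref{th-Linear-Cayley} to conclude extended Cayley equivalence. The paper's own proof is terser and leaves the constant-term bookkeeping (that $g(0)=h(0)$ and that subtracting it commutes with $x \mapsto Ax$) implicit in the word ``extended,'' whereas you write it out explicitly; this is a welcome clarification but not a different argument.
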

\begin{proof}
Let $f$, $g$, and $h$ be as per Theorem \ref{th-Affine-Translate-Linear}.
If $f$ is bent, then so are $g$ and $h$.
Since, by Theorem \ref{th-Affine-Translate-Linear}, $g$ is general linear equivalent to $h$,
by Theorem \ref{th-Linear-Cayley}, $g$ is extended Cayley equivalent to $h$.
\end{proof}

As a consequence, to determine which strongly regular graphs occur, corresponding to each
extended Cayley equivalence classes within the extended affine
equivalence class of a bent function $f : \F_2^{2m} \To \F_2$ with $f(0)=0$,
we need only examine the extended translation equivalent functions of the form
\begin{align*}
f(x+b) + \langle c, x \rangle + f(b),
\end{align*}
for each $b, c \in \F_2^{2m}$.
This cuts down the required search space considerably.


\paragraph*{Quadratic bent functions have only two extended Cayley classes.}
Finally, in the case of quadratic bent functions, there is a complete classification in terms of
weight classes.
\begin{Theorem}
\label{th-Quadratic-Classes}
For each $m>0$, the extended affine equivalence class of quadratic bent functions
$q : \F_2^{2m} \To \F_2$ contains exactly two extended Cayley equivalence classes,
corresponding to the two possible weight classes of
$x \mapsto q(x+b) + \langle c, x \rangle + q(b)$.
\end{Theorem}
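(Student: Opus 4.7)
The plan is to apply Corollary~\ref{corr-Affine-Translate-Cayley} to reduce the problem to classifying the extended translation equivalents of a fixed quadratic bent function, and then to invoke the classification of quadratic forms over $\F_2$ with non-degenerate polarization.

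First, fix a quadratic bent function $q : \F_2^{2m} \To \F_2$ with $q(0)=0$, and for $b, c \in \F_2^{2m}$ form the extended translate
\begin{align*}
g_{b,c}(x) := q(x+b) + \langle c, x \rangle + q(b).
\end{align*}
Since $q$ is quadratic, its polarization $B(x,y) := q(x+y) + q(x) + q(y)$ is a symmetric bilinear form, non-degenerate because $q$ is bent, so
\begin{align*}
g_{b,c}(x) = q(x) + B(x,b) + \langle c, x \rangle.
\end{align*}
Thus $g_{b,c}$ is again a quadratic bent function with $g_{b,c}(0)=0$, differing from $q$ only by a linear form. By Corollary~\ref{corr-Affine-Translate-Cayley}, every element of the EA class of $q$ is extended Cayley equivalent to some such $g_{b,c}$, so it suffices to classify the $g_{b,c}$ up to extended Cayley equivalence.

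Next I would invoke the classification of quadratic forms over $\F_2$ with non-degenerate polarization: the set of such forms vanishing at $0$ falls into exactly two $GL(2m,2)$-orbits, distinguished by the Arf invariant, and for bent quadratic functions the Arf invariant coincides with the weight class, which is the property relegated to the appendix. Consequently, any two of the $g_{b,c}$ sharing a common weight class are general linear equivalent, and therefore Cayley equivalent by Theorem~\ref{th-Linear-Cayley}; this gives at most two extended Cayley classes, one per weight class. Since $g_{b,c}(0)=0$ for every choice of $b,c$, extended Cayley equivalence among these functions reduces to ordinary Cayley equivalence, so no subtlety is hidden in the ``extended'' part of the definition.

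The main obstacle is the identification of the Arf invariant with the weight class, which is precisely what ties the algebraic classification of quadratic forms to the combinatorial data visible to the Cayley graph. To see that the two weight classes really do produce distinct extended Cayley classes, I would appeal to Proposition~\ref{pr-Cayley-bent-strongly-regular}: the Cayley graphs of bent functions of weight class $0$ and weight class $1$ are strongly regular of different degree $k$, namely $2^{2m-1} - 2^{m-1}$ versus $2^{2m-1} + 2^{m-1}$, and so they cannot be isomorphic. Combining the two directions, there are exactly two extended Cayley classes within the EA class of $q$, indexed by weight class, as claimed.
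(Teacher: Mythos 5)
Your proposal is correct, and its first half coincides with the paper's: both arguments reduce, via Corollary~\ref{corr-Affine-Translate-Cayley}, to the extended translation class of a fixed quadratic bent function, and both absorb the translation by $b$ into a linear term using the polar form (the paper does this concretely for $q(x)=\sum_{k}x_k x_{m+k}$ in Lemma~\ref{lm-notes-5}; your $B(x,b)$ is the same computation). Where you genuinely diverge is in the core step. The paper does not cite the classification of quadratic forms: Appendix~\ref{app-proof-of} proves the needed equivalences by explicitly constructing matrices $A\in GL(2m,2)$ (Lemmas~\ref{lm-notes-4}, \ref{lm-notes-6}, \ref{lm-notes-6-b}, \ref{lm-notes-7}) showing that $q(x)+\langle c,x\rangle$ is general linear equivalent to $q$ when $q(c)=0$ and that any two such functions with $q(c)=q(c')=1$ are general linear equivalent to each other; it then separates the two classes via Lemma~\ref{lm-notes-9b} and the self-duality of $q$, which give $\weightclass{x\mapsto q(x)+\langle c,x\rangle}=q(c)$. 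You instead invoke the Arf--Dickson classification (two $GL(2m,2)$-orbits of quadratic forms with non-degenerate polar form, distinguished by the Arf invariant, which for bent quadratics equals the weight class) and separate the classes by comparing the degree $k$ of the strongly regular graphs. Both routes are valid; yours is shorter but outsources precisely the content the paper's appendix establishes from scratch --- the two-orbit classification and the identification of the Arf (democratic) invariant with the weight class --- so you would need to supply a reference or a proof for those facts, as you yourself flag. One small point worth making explicit in your version: both weight classes are actually realized among the $g_{b,c}$ (some $c$ has $q(c)=1$ since a bent function has positive weight), so the count is exactly two rather than at most two.
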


The proof of this theorem is given in Appendix~\ref{app-proof-of}.

\subsection{Relationships between duality of bent functions and different concepts of equivalence}

The following propositions are based on well known results,
but are useful in understanding the relationship
between the duality of bent functions and various concepts of equivalence.

Firstly, general linear equivalence of bent functions $f$ and $g$
implies general linear equivalence of their duals, $\dual{f}$ and $\dual{g}$,
which implies Cayley equivalence of $\dual{f}$ and $\dual{g}$.
\begin{Proposition}
\label{prop-dual-linear-equivalence}
\cite[Remark 6.2.7]{Dil74}

For a bent function $f : \F_2^{2m} \To \F_2$, and $A \in GL(2 m, 2)$, if
\begin{align*}
g(x) &:= f(A x)
\intertext{then}
\dual{g}(x) &= \dual{f}\big((A^T)^{-1} x \big),
\end{align*}
and therefore by Theorem \ref{th-Linear-Cayley}, $\dual{g} \equiv \dual{f}$.

If, in addition, $f=\dual{f}$ then $\dual{g} \equiv g$.
\end{Proposition}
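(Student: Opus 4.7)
\medskip

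\noindent\textbf{Proof plan.} The plan is to compute $\dual{g}$ directly from Definition~\ref{def-dual-Bent-function} by substituting $z = Ay$ in the Walsh--Hadamard sum, thereby moving the action of $A$ from inside $f$ into the bilinear form, where it can be absorbed as an adjoint. The second claim will then follow by transitivity of general linear equivalence.

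First, I would unfold the definition of the dual:
\begin{align*}
(-1)^{\dual{g}(x)}
&= 2^{-m} W_g(x)
= 2^{-m} \sum_{y \in \F_2^{2m}} (-1)^{f(Ay) + \langle x, y \rangle}.
\end{align*}
Next, since $A \in GL(2m,2)$, the map $y \mapsto Ay$ is a bijection on $\F_2^{2m}$, so the substitution $z := Ay$, $y = A^{-1}z$, reindexes the sum without changing its value:
\begin{align*}
(-1)^{\dual{g}(x)}
&= 2^{-m} \sum_{z \in \F_2^{2m}} (-1)^{f(z) + \langle x, A^{-1} z \rangle}.
\end{align*}
Then I would use the adjoint identity $\langle x, A^{-1} z \rangle = \langle (A^T)^{-1} x, z \rangle$, valid for the standard bilinear form on $\F_2^{2m}$, to rewrite the exponent and recognize the result as a Walsh--Hadamard transform of $f$ evaluated at $(A^T)^{-1} x$:
\begin{align*}
(-1)^{\dual{g}(x)}
&= 2^{-m} W_f\bigl((A^T)^{-1} x\bigr)
= (-1)^{\dual{f}((A^T)^{-1} x)}.
\end{align*}
Hence $\dual{g}(x) = \dual{f}\bigl((A^T)^{-1} x\bigr)$. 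Since $(A^T)^{-1} \in GL(2m,2)$, Theorem~\ref{th-Linear-Cayley} applied to $\dual{f}$ immediately yields $\dual{g} \equiv \dual{f}$.

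For the final assertion, suppose additionally $f = \dual{f}$. Then on the one hand $g(x) = f(Ax)$, and on the other $\dual{g}(x) = \dual{f}((A^T)^{-1} x) = f((A^T)^{-1} x)$. Both $g$ and $\dual{g}$ are thus obtained from the same function $f$ by composition with an element of $GL(2m,2)$, so by Theorem~\ref{th-Linear-Cayley} each is Cayley equivalent to $f$; by transitivity of Cayley equivalence, $\dual{g} \equiv g$.

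The computation is essentially routine; the only genuine technical point is the adjoint identity $\langle x, A^{-1} z \rangle = \langle (A^T)^{-1} x, z \rangle$, which is the natural explanation for why the transpose inverse appears rather than $A^{-1}$ itself. A secondary detail is that Theorem~\ref{th-Linear-Cayley} presumes the Boolean function vanishes at the origin, so one should observe that when $f(0)=0$ the values $g(0) = f(0) = 0$ and $\dual{f}(0) = \weightclass{f}$, $\dual{g}(0) = \weightclass{g}$ (via Lemma~\ref{lm-notes-9b}) are compatible with the Cayley graphs being well-defined in the relevant cases.
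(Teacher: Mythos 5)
Your computation is correct: the change of variables $z=Ay$ in the Walsh--Hadamard sum together with the adjoint identity $\langle x, A^{-1}z\rangle = \langle (A^T)^{-1}x, z\rangle$ is exactly the standard argument, and your handling of the final claim via transitivity is fine. Note that the paper itself offers no proof here --- it presents the statement as a known result cited to Dillon's thesis --- so there is nothing to compare against; your proof supplies the missing details, and your closing remark about $\dual{f}(0)=\weightclass{f}$ correctly flags the only point where the appeal to Theorem~\ref{th-Linear-Cayley} needs the (extended) Cayley-equivalence caveat.
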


Remark: Functions of the form
\begin{align*}
f(x) := \sum_{k=0}^{m-1} x_{2k} x_{2k+1}
\end{align*}
are self dual bent functions, $f=\dual{f}$ \cite[Remark 6.3.2]{Dil74}.
There are many other self dual bent functions \cite{CarDPS10self,FeuSSW2013}.

Secondly, the following proposition displays a relationship between the extended translation
class of a bent function $f$, and that of its dual $\dual{f}$.
\begin{Proposition}
\label{prop-dual-affine-equivalence}
\cite[Remark 6.2.7]{Dil74} \cite[Proposition 8.7]{Car10boolean}.
%

For a bent function $f$ on $\F_2^{2m}$, and $b,c \in \F_2^{2m}$,
if
\begin{align*}
g(x) &:= f(x+b) + \langle c, x \rangle
\intertext{then}
\dual{g}(x) &= \dual{f}(x+c) + \langle b, x \rangle + \langle b, c \rangle.
\end{align*}
\end{Proposition}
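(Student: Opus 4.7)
The plan is to prove this directly from the definitions of the Walsh Hadamard transform and the dual bent function, by substituting the translated-and-linearized form of $g$ into $W_g$ and performing a change of variable. First I would write
\begin{align*}
W_g(x) &= \sum_{y \in \F_2^{2m}} (-1)^{g(y) + \langle x, y \rangle}
= \sum_{y \in \F_2^{2m}} (-1)^{f(y+b) + \langle c, y \rangle + \langle x, y \rangle}
\\
&= \sum_{y \in \F_2^{2m}} (-1)^{f(y+b) + \langle x + c, y \rangle},
\end{align*}
collecting the two linear terms via bilinearity.

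Next I would substitute $z := y + b$ so that $y = z + b$ and $\langle x+c, y \rangle = \langle x+c, z \rangle + \langle x+c, b \rangle$. The term $\langle x+c, b \rangle$ is independent of the summation index, so it factors out as an overall sign:
\begin{align*}
W_g(x)
&= (-1)^{\langle x+c, b \rangle} \sum_{z \in \F_2^{2m}} (-1)^{f(z) + \langle x+c, z \rangle}
= (-1)^{\langle x+c, b \rangle} W_f(x+c).
\end{align*}
Applying Definition~\ref{def-dual-Bent-function} to both sides gives $(-1)^{\dual{g}(x)} = 2^{-m} W_g(x) = (-1)^{\langle b, x \rangle + \langle b, c \rangle + \dual{f}(x+c)}$, where I use symmetry of the bilinear form to rewrite $\langle x+c, b \rangle = \langle b, x \rangle + \langle b, c \rangle$. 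Reading off the exponent (mod $2$) yields the claimed identity.

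There is essentially no obstacle here; the proof is a routine computation. The only point requiring any care is keeping track of where the constant $\langle b, c \rangle$ comes from — specifically, that it arises from the cross-term in $\langle x+c, z+b \rangle$ after the substitution $z = y+b$ and not from the dual itself, and that symmetry of $\langle \cdot, \cdot \rangle$ lets us convert $\langle x+c, b \rangle$ into the form stated. No appeal is needed to any of the structural results earlier in the paper beyond the basic definitions.
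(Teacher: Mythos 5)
Your computation is correct and is the standard argument (it is essentially the proof of \cite[Proposition 8.7]{Car10boolean}); the paper itself states this result as a Proposition with citations and supplies no proof, so there is nothing to diverge from. The one step worth double-checking, the change of variable $z:=y+b$ with the cross-term $\langle x+c,b\rangle$ factoring out of the sum and the use of symmetry to rewrite it as $\langle b,x\rangle+\langle b,c\rangle$, is handled correctly.
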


This result has an implication for the relationship between the set of bent functions within
an extended translation (ET) equivalence class, and the set of their duals.
Recall that a bent function is not necessarily extended affine (EA) equivalent to its dual
\cite{LanLM08Kasami}.
The following ``all or nothing'' property holds within an extended translation equivalence class of bent functions.
\begin{Corollary}
\label{cor-dual-ET-EC}
For bent functions $f, g$ on $\F_2^{2m}$,
if $f$ is EA equivalent to $\dual{f}$ and $g$ is ET equivalent to $f$,
then $\dual{g}$ is EA equivalent to $g$.
Thus, by Corollary~\ref{corr-Affine-Translate-Cayley},
the set of isomorphism classes of Cayley graphs of the \Emph{duals} of the bent functions in
the ET class of $f$ equals the set of isomorphism classes of Cayley graphs of
the bent functions themselves.

Conversely, for a bent function $f$ on $\F_2^{2m}$,
if there is any bent function $g$ that is ET equivalent to $f$,
such that $\dual{g}$ is not EA equivalent to $g$, then no bent function in the ET class is EA
equivalent to its dual, including $f$ itself.
\end{Corollary}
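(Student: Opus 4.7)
The plan is to derive the first assertion directly from Proposition~\ref{prop-dual-affine-equivalence} combined with transitivity of EA equivalence, then obtain the Cayley graph statement via Corollary~\ref{corr-Affine-Translate-Cayley}, and finally obtain the converse by contraposition. First I would fix notation: since $g$ is ET equivalent to $f$, write $g(x) = f(x+b) + \langle c, x \rangle + \delta$ for some $b, c \in \F_2^{2m}$, $\delta \in \F_2$; and since $f$ is EA equivalent to $\dual{f}$, write $\dual{f}(x) = f(Ax+b') + \langle c', x \rangle + \delta'$ for some $A \in GL(2m,2)$, $b', c' \in \F_2^{2m}$, $\delta' \in \F_2$. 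Because ET equivalence is a special case of EA equivalence, it suffices to prove that $\dual{g}$ is EA equivalent to $f$; then transitivity yields $\dual{g}$ EA equivalent to $g$.

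Next I would compute $\dual{g}$. Applying Proposition~\ref{prop-dual-affine-equivalence}, together with the immediate observation from Definition~\ref{def-dual-Bent-function} that adding the constant $\delta$ to $g$ adds $\delta$ to $\dual{g}$, one obtains
\begin{align*}
\dual{g}(x) &= \dual{f}(x+c) + \langle b, x \rangle + \langle b, c \rangle + \delta.
\end{align*}
Substituting $\dual{f}(x+c) = f\bigl(A(x+c)+b'\bigr) + \langle c', x+c \rangle + \delta'$ and collecting terms gives
\begin{align*}
\dual{g}(x) &= f\bigl(Ax + Ac + b'\bigr) + \langle c'+b,\, x \rangle + \bigl(\langle c', c \rangle + \langle b, c \rangle + \delta + \delta'\bigr),
\end{align*}
which exhibits $\dual{g}$ explicitly as $f(A'' x + b'') + \langle c'', x \rangle + \delta''$ with $A'' = A$, $b'' = Ac + b'$, $c'' = c' + b$, and $\delta''$ as shown. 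Thus $\dual{g}$ is EA equivalent to $f$, and hence EA equivalent to $g$.

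For the Cayley graph claim, Corollary~\ref{corr-Affine-Translate-Cayley} converts the EA equivalence between $\dual{g}$ and $g$ into an EC equivalence: for each bent $g$ in the ET class of $f$ there is some $g'$ in the ET class of $g$, which coincides with the ET class of $f$, with $g' \cong \dual{g}$. Hence every Cayley graph isomorphism class arising from a dual also arises from a member of the ET class, and the reverse inclusion follows by applying the same reasoning to the involution $g \mapsto \dual{g}$. For the converse in the second assertion, I would argue by contraposition: if some $h$ in the ET class of $f$ were EA equivalent to $\dual{h}$, then because ET equivalence is an equivalence relation the ET class of $h$ equals the ET class of $f$, and the first assertion applied with $h$ in place of $f$ would force $\dual{g}$ to be EA equivalent to $g$ for the hypothesized $g$, contradicting the assumption. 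The main obstacle is purely the bookkeeping of affine parameters; the one subtlety is that Proposition~\ref{prop-dual-affine-equivalence} is stated without the additive constant $\delta$, which must be restored separately from Definition~\ref{def-dual-Bent-function}.
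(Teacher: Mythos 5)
Your proof is correct and follows exactly the route the paper intends: the paper states this Corollary without an explicit proof, deriving it implicitly from Proposition~\ref{prop-dual-affine-equivalence} (with the constant $\delta$ restored, as you note) together with transitivity of EA equivalence, Corollary~\ref{corr-Affine-Translate-Cayley} for the Cayley-graph statement, and contraposition for the converse. Your write-up simply makes that intended derivation explicit, and the parameter bookkeeping checks out.
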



\section{Bent functions and block designs}
\label{sec-Bent-designs}

This section examines the relationships between bent functions and symmetric block designs.



%
\subsection{The two block designs of a bent function}

The first block design of a bent function $f$ on $\F_2^{2m}$ is obtained by interpreting
the adjacency matrix of $\Cay{f}$ as the incidence matrix of a block design.
In this case we do not need $f(0)=0$ \cite[p. 160]{DilS87block}.

The second block design of a bent function $f$ involves the
\Emph{symmetric difference property}, which was first investigated by Kantor
\cite[Section 5]{Kan75symplectic}.
\begin{Definition}
\label{def-Symmetric-difference-property}
\cite[p. 49]{Kan75symplectic}.

A symmetric block design $\mathcal{D}$ has the symmetric difference property (SDP)
if, for any three blocks, $B, C, D$ of $\mathcal{D},$ the symmetric difference
$B \bigtriangleup C \bigtriangleup D$ is either a block or the complement of a block.
\end{Definition}

This second block design is defined as follows.
\begin{Definition}
\label{def-SDP-design}
For a bent function  $f$ on $\F_2^{2m}$, define the matrix $M_D(f) \in \F_2^{2^{2m} \times 2^{2m}}$ where
\begin{align}
M_D(f)_{c,x} &:= f(x) + \langle c, x \rangle + \dual{f}(c),
\label{D-f-def}
\end{align}
and use it as the incidence matrix of a symmetric block design, which
we call it the \emph{SDP design} of $f$.
\end{Definition}

Kantor describes the special case where $f$ is quadratic
\cite[Section 5]{Kan75symplectic},
and Dillon and Schatz \cite{DilS87block} describe the general case.
See also Cameron and van Lint \cite[pp. 77-78 and Ex. 13, p. 152]{CamVL91}.

The following properties of SDP designs of bent functions are well known.
\begin{Proposition}
\label{prop-SDP-design}
\cite[p. 160]{DilS87block} \cite[Theorem 3.29]{Neu06bent}

For any bent function $f$ on $\F_2^{2m}$, the SDP design of $f$ has the symmetric difference property.
\end{Proposition}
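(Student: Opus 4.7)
The plan is to work directly with the characteristic functions of the blocks and reduce the SDP condition to a one-line algebraic identity. For each $c \in \F_2^{2m}$, let $B_c$ denote the block of the SDP design indexed by $c$, so that the characteristic function of $B_c$ is the row of $M_D(f)$ indexed by $c$, namely
\begin{align*}
\chi_{B_c}(x) &= f(x) + \langle c, x \rangle + \dual{f}(c).
\end{align*}
Since the symmetric difference of sets corresponds to $\F_2$-addition of characteristic functions, the symmetric difference $B_a \bigtriangleup B_b \bigtriangleup B_c$ is the set on which $\chi_{B_a}(x) + \chi_{B_b}(x) + \chi_{B_c}(x) = 1$.

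First I would add the three characteristic functions in $\F_2$. Using $3 f(x) = f(x)$ and the bilinearity of $\langle \cdot, \cdot \rangle$, the sum collapses to
\begin{align*}
\chi_{B_a}(x) + \chi_{B_b}(x) + \chi_{B_c}(x)
&= f(x) + \langle a+b+c, x \rangle + \dual{f}(a) + \dual{f}(b) + \dual{f}(c).
\end{align*}
Next I would compare this with the characteristic function of the candidate block $B_{a+b+c}$, which is
\begin{align*}
\chi_{B_{a+b+c}}(x) &= f(x) + \langle a+b+c, x \rangle + \dual{f}(a+b+c).
\end{align*}
Subtracting, the two expressions differ by the constant
\begin{align*}
\varepsilon &:= \dual{f}(a) + \dual{f}(b) + \dual{f}(c) + \dual{f}(a+b+c) \in \F_2.
\end{align*}

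Finally, I would conclude by cases on $\varepsilon$. If $\varepsilon = 0$, then $B_a \bigtriangleup B_b \bigtriangleup B_c = B_{a+b+c}$, a block of the design. If $\varepsilon = 1$, then $B_a \bigtriangleup B_b \bigtriangleup B_c$ is the complement of $B_{a+b+c}$. Either way the symmetric difference property holds, which is what was to be shown. There is no serious obstacle here: the only real observation is the natural guess $d = a+b+c$, which is forced by the linear part $\langle a+b+c, x \rangle$ appearing after the characteristic functions are added, and the rest is bookkeeping in $\F_2$. In particular the argument does not use bentness of $f$ in any deep way; it only uses that $\dual{f}$ is a well-defined Boolean function on $\F_2^{2m}$, which is guaranteed by Definition~\ref{def-dual-Bent-function}.
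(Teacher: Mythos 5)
Your argument is correct, but note that the paper does not actually prove this proposition: following its stated convention, it presents the result as a known fact and defers the proof to Dillon and Schatz \cite[p.~160]{DilS87block} and Neumann \cite[Theorem 3.29]{Neu06bent}. Your direct computation supplies the missing argument, and it is the standard one: since symmetric difference of sets is $\F_2$-addition of characteristic functions, the row shape $f(x) + \langle c, x \rangle + \dual{f}(c)$ of $M_D(f)$ forces $B_a \bigtriangleup B_b \bigtriangleup B_c$ to agree with $B_{a+b+c}$ up to the additive constant $\varepsilon = \dual{f}(a)+\dual{f}(b)+\dual{f}(c)+\dual{f}(a+b+c)$, giving the block itself when $\varepsilon=0$ and its complement when $\varepsilon=1$; the candidate index $a+b+c$ is indeed dictated by the linear term, and the rest is bookkeeping. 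Your closing remark deserves emphasis: this symmetric-difference closure is purely formal and would hold with any Boolean function substituted for $\dual{f}$ in \eqref{D-f-def}. Where bentness genuinely enters is not here but in the presupposition, built into Definition~\ref{def-SDP-design}, that $M_D(f)$ is the incidence matrix of a \emph{symmetric block design} at all --- e.g.\ by Lemma~\ref{lm-notes-9b} every row has weight class $0$, hence constant weight $2^{2m-1}-2^{m-1}$, and the rows are the minimum-weight words of the code spanned by $f$ and $RM(1,2m)$ as in Lemma~\ref{lm-SDP-design-rows}. Your proof, like the proposition as stated, takes that design structure for granted, which is legitimate for the claim being made but worth flagging so the role of bentness is not misread as entirely absent.
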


\begin{Proposition}
\label{prop-SDP-design-affine-equivalence}
\cite[p. 161]{DilS87block} \cite{Kan83exponential}

For bent functions $f, g$ on $\F_2^{2m}$,
the two SDP designs $D(f)$ and $D(g)$ are isomorphic as symmetric block designs
if and only if $f$ and $g$ are affine equivalent.
\end{Proposition}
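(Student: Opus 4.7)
The plan is to treat the two directions separately. For the forward implication, I would verify directly that an extended affine equivalence $g(x) = f(Ax + b) + \langle c_0, x\rangle + \delta$ induces an isomorphism $D(f) \cong D(g)$ by exhibiting explicit permutations of the index sets. After first extending Proposition~\ref{prop-dual-affine-equivalence} to cover the linear substitution $x \mapsto Ax$, giving a formula for $\dual{g}$ in terms of $\dual{f}$, one substitutes into~\eqref{D-f-def} and uses $\langle c, x\rangle = \langle (A^{-1})^T c, Ax\rangle$ to cancel the cross-terms, obtaining $M_D(g)_{c,x} = M_D(f)_{(A^{-1})^T(c+c_0),\,Ax+b}$. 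This is routine bookkeeping with no essential subtlety.

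The converse is the substantive direction. A design isomorphism $D(f) \cong D(g)$ is encoded by a pair $(\pi,\sigma)$ of bijections of $\F_2^{2m}$ satisfying
\begin{align*}
f(x) + \langle c, x\rangle + \dual{f}(c) \;=\; g(\pi(x)) + \langle \sigma(c), \pi(x)\rangle + \dual{g}(\sigma(c)) \quad \text{for all } c,x.
\end{align*}
If one can establish that $\pi$ is an affine map $x \mapsto Ax + b$, then specialising this identity at $c=0$ and rearranging expresses $g(y)$ as $f(A^{-1}(y-b))$ plus an affine correction, yielding the EA equivalence; the data of $\sigma$ then supplies the remaining parameters $c_0$ and $\delta$ consistently.

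The main obstacle is therefore showing $\pi$ is affine, which is a rigidity statement about $\mathrm{Aut}(D(f))$. The approach I would follow, drawing on Kantor~\cite{Kan75symplectic} and Dillon--Schatz~\cite{DilS87block}, is to identify an intrinsic point-regular elementary abelian $2$-subgroup $T \le \mathrm{Aut}(D(f))$: each translation $x \mapsto x + t$ lifts to a design automorphism because, by the SDP (Proposition~\ref{prop-SDP-design}), the symmetric difference of three blocks is a block, and the action of $t$ on points is compatible with a permutation of the $c$-indices preserving the block-structure. One then argues that $T$ is characteristic in $\mathrm{Aut}(D(f))$, for instance by recognising it as the unique point-regular normal elementary abelian $2$-subgroup, or by extracting it intrinsically from the combinatorics of block triples. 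Any isomorphism of designs must normalise $T$ and hence induce an affine map on $\F_2^{2m}$, which reduces the problem to the case already handled above.

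The hardest technical step will be the characteristic-subgroup claim for $T$; everything else is either a cancellation exercise or a straightforward rearrangement. Once that rigidity is in hand, the biconditional of the proposition follows by combining the forward verification with the extraction of $(A,b,c_0,\delta)$ from $(\pi,\sigma)$ described above.
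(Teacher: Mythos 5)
First, note that the paper itself offers no proof of this Proposition: it is presented as a known result with citations to Dillon--Schatz and Kantor, so there is no in-paper argument to compare against. Judged on its own terms, your forward direction is sound: the identity $M_D(g)_{c,x}=M_D(f)_{(A^{-1})^T(c+c_0),\,Ax+b}$ does hold (it follows from Propositions~\ref{prop-dual-linear-equivalence} and~\ref{prop-dual-affine-equivalence} together with Theorem~\ref{th-Affine-Translate-Linear}), and it exhibits the required pair of permutations.

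The converse, however, contains a genuine error at exactly the step you flag as the crux. The translation $x\mapsto x+t$ is \emph{not} an automorphism of the SDP design in general. It sends the block $B_c=\support{x\mapsto f(x)+\langle c,x\rangle+\dual{f}(c)}$ to the support of $y\mapsto f(y+t)+\langle c,y\rangle+(\text{const})$, and since two Boolean functions with equal supports are equal, this is again a block only if the derivative $f(y+t)+f(y)$ is an affine function of $y$. For a bent function of degree $\geq 3$ this fails for generic $t$ (e.g.\ for $f_{6,2}$ and $t=e_0$ the derivative is $y_1y_2+y_3$), so the point-regular elementary abelian subgroup $T$ you want to locate inside $\mathrm{Aut}(D(f))$ simply is not there; no characteristic-subgroup argument can repair this, because the obstruction is membership, not uniqueness. (Translations \emph{are} automorphisms of the first design of Section~\ref{sec-Bent-designs}, the one coming from the adjacency matrix of $\Cay{f}$; you may be conflating the two designs.) The standard route to the rigidity you need is code-theoretic and is already half-supplied by Lemma~\ref{lm-SDP-design-rows}: the blocks of $D(f)$ are precisely the minimum-weight words of the code $C_f$ spanned by $\support{f}$ and $RM(1,2m)$, and these words span $C_f$, so a design isomorphism is a coordinate permutation carrying $C_f$ onto $C_g$. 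Inside each such code the subcode $RM(1,2m)$ is singled out by the weight distribution (weights $0$, $2^{2m-1}$, $2^{2m}$ versus $2^{2m-1}\pm 2^{m-1}$ on the nontrivial coset), hence is preserved; since the permutation automorphism group of $RM(1,2m)$ is the general affine group, $\pi$ is affine, and your final rearrangement at $c=0$ then goes through as written.
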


%
%

\paragraph*{Weight classes and the SDP design matrix.}


Definition~\ref{def-SDP-design} is different from
but equivalent to the one given by Dillon and Schatz \cite[p. 160]{DilS87block}:
\begin{Lemma}
\label{lm-SDP-design-rows}
\cite[3.29]{Neu06bent}

For any bent function $f$ on $\F_2^{2m}$, the rows of the incidence matrix $M_D(f)$
are given by the words of minimum weight in the code spanned by the support of $f$ and the Reed-Muller code $RM(1,2m)$.
\end{Lemma}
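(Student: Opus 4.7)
The plan is to identify each row of $M_D(f)$ with a specific Boolean function, compute its Hamming weight, enumerate all codewords of the code $C$ spanned by $f$ and $RM(1,2m)$, determine the minimum weight of $C$, and check that the rows of $M_D(f)$ are precisely the minimum-weight codewords.

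First I would let $h_c(x) := f(x) + \langle c, x \rangle + \dual{f}(c)$, so that row $c$ of $M_D(f)$ is exactly the truth table of $h_c$. Using Definitions \ref{def-Bent-function} and \ref{def-dual-Bent-function},
\begin{align*}
\sum_{x \in \F_2^{2m}} (-1)^{h_c(x)}
 &= (-1)^{\dual{f}(c)} W_f(c)
 = (-1)^{2 \dual{f}(c)} 2^m = 2^m,
\end{align*}
so $\weight{h_c} = 2^{2m-1} - 2^{m-1}$. Thus every row of $M_D(f)$ has the same weight.

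Next I would describe the codewords of $C := \langle f \rangle + RM(1,2m)$. Each has the form $\alpha f(x) + \langle c, x \rangle + \delta$ with $\alpha, \delta \in \F_2$ and $c \in \F_2^{2m}$. If $\alpha = 0$, the codeword is affine with weight $0$, $2^{2m-1}$ or $2^{2m}$. If $\alpha = 1$, the codeword is bent (since adding an affine function to a bent function is bent, as a short calculation with the Walsh--Hadamard transform shows), so by Definition \ref{def-weight-class} its weight is either $2^{2m-1} - 2^{m-1}$ or $2^{2m-1} + 2^{m-1}$.

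Finally, comparing these weights shows that the minimum nonzero weight of $C$ is $2^{2m-1} - 2^{m-1}$, attained only when $\alpha = 1$ and the resulting bent function lies in weight class $0$. By Lemma \ref{lm-notes-9b}, $y \mapsto f(y) + \langle c, y \rangle$ has weight class $\dual{f}(c)$, and adding $\delta$ flips the weight class exactly when $\delta = 1$. Hence $f(x) + \langle c, x \rangle + \delta$ is in weight class $0$ if and only if $\delta = \dual{f}(c)$. This produces exactly $2^{2m}$ minimum-weight codewords, one for each $c \in \F_2^{2m}$, and these are precisely the rows $h_c$ of $M_D(f)$. The only mild obstacle is verifying that no affine codeword can match the minimum weight $2^{2m-1} - 2^{m-1}$ and that the parity of $\delta$ required to reach weight class $0$ is exactly $\dual{f}(c)$; both reduce to one application of Lemma \ref{lm-notes-9b} together with the Walsh--Hadamard identity above.
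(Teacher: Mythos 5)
Your proof is correct and follows essentially the same route as the paper's: identify row $c$ of $M_D(f)$ with the function $x \mapsto f(x) + \langle c, x\rangle + \dual{f}(c)$ and use Lemma~\ref{lm-notes-9b} to conclude it lies in weight class $0$, i.e.\ has weight $2^{2m-1}-2^{m-1}$. You are in fact more complete than the paper, which leaves implicit the enumeration showing that the affine codewords (weights $0$, $2^{2m-1}$, $2^{2m}$) and the weight-class-$1$ bent codewords all exceed this value, so that the weight-class-$0$ words are exactly the minimum-weight words of the code.
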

(Here we have used an ordering of the elements of $\F_2^{2m}$ to define an ordering of the columns of the incidence matrix.)


\begin{proof}
Firstly, as mentioned in Section \ref{sec-Boolean-functions},
the Reed-Muller code $RM(1,2m)$ consists of the words spanned by the affine functions on $Z_2^{2m}$.
Thus, the incidence matrix $M_{RM(1,2m)}$ is defined by
\begin{align*}
{M_{RM(1,2m)}}_{c,x} &:= \langle c, x \rangle + d,
\end{align*}
where $d \in \F_2$.

Therefore the incidence matrix of the code spanned by the support of $f$ and $RM(1,2m)$ is defined by
\begin{align*}
{M_{f,RM(1,2m)}}_{c,x} &:= f(x) + \langle c, x \rangle + d.
\end{align*}
Finally, from Lemma~\ref{lm-notes-9b} we know that
\begin{align*}
\weightclass{x \mapsto f(x) + \langle c, x \rangle}
&=
\dual{f}(c),
\intertext{so that}
\weightclass{x \mapsto f(x) + \langle c, x \rangle + \dual{f}(c)}
&=
0.
\end{align*}
\end{proof}

The following characterization of the SDP design of a bent function $f$ also relies on
Lemma~\ref{lm-notes-9b} for its proof.
We first define the matrix of weight classes corresponding to the extended translation class of $f$.
\begin{Definition}
\label{def-weight-class-matrix}

For a bent function $f : \F_2^{2m} \To \F_2$,
define the \Emph{weight class matrix} of $f$ by
\begin{align*}
M_{wc}(f)_{c,b}
&:=
\weightclass{x \mapsto f(x+b) + \langle c, x \rangle + f(b)}
\end{align*}
for $b,c \in \F_2^{2m}$.
\end{Definition}

\begin{Theorem}
\label{th-Dillon-Schatz}
The weight class matrix of $f$ as given by Definition~\ref{def-weight-class-matrix}
equals the incidence matrix of the SDP design of $f$.
Specifically,
\begin{align*}
M_{wc}(f)_{c,b}
&=
f(b) + \langle c, b \rangle + \dual{f}(c)
\\
&=
M_D(f)_{c,b},
\end{align*}
where $M_D(f)$ is defined by \eqref{D-f-def}.
\end{Theorem}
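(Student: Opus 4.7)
The plan is to rewrite $M_{wc}(f)_{c,b}$ by peeling off the constant $f(b)$, performing the change of variable $y := x+b$, peeling off the resulting constant $\langle c, b \rangle$, and then invoking Lemma~\ref{lm-notes-9b} to identify a $\dual{f}(c)$ term. The key auxiliary fact I would first record is that for any Boolean function $h : \F_2^{2m} \To \F_2$ and any constant $\delta \in \F_2$, one has $\weightclass{h+\delta} = \weightclass{h} + \delta$. This is immediate: if $\delta = 0$ nothing changes, and if $\delta = 1$ the function is complemented, sending weight $w$ to $2^{2m}-w$, which swaps the two possible bent weights $2^{2m-1} \pm 2^{m-1}$ and hence their weight class numbers.

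Applying this observation to $g(x) := f(x+b) + \langle c, x \rangle + f(b)$ with constant $\delta = f(b)$ gives
\begin{align*}
M_{wc}(f)_{c,b} = \weightclass{g} = \weightclass{x \mapsto f(x+b) + \langle c, x \rangle} + f(b).
\end{align*}
Next, the substitution $y := x+b$ is a bijection of $\F_2^{2m}$ and so preserves the support and the weight. Since $\langle c, y+b \rangle = \langle c, y \rangle + \langle c, b \rangle$, this gives
\begin{align*}
\weightclass{x \mapsto f(x+b) + \langle c, x \rangle}
&= \weightclass{y \mapsto f(y) + \langle c, y \rangle + \langle c, b \rangle} \\
&= \weightclass{y \mapsto f(y) + \langle c, y \rangle} + \langle c, b \rangle,
\end{align*}
where the second line is the constant-shift observation applied again with $\delta = \langle c, b \rangle$.

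Finally, Lemma~\ref{lm-notes-9b} identifies $\weightclass{y \mapsto f(y) + \langle c, y \rangle}$ with $\dual{f}(c)$. Combining the three steps yields
\begin{align*}
M_{wc}(f)_{c,b} = \dual{f}(c) + \langle c, b \rangle + f(b) = M_D(f)_{c,b},
\end{align*}
as required. This argument is almost all bookkeeping over $\F_2$; the only real content is the constant-shift observation, and that is where I expect the sole pitfall to lie, since forgetting that complementing a bent function flips its weight class would break the identity. Because both change of variable and translation of the linear form introduce only constants, nothing beyond Lemma~\ref{lm-notes-9b} and this one additional observation is needed.
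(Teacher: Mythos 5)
Your proof is correct and follows essentially the same route as the paper's: a change of variable $y := x+b$, extraction of the constants $f(b)$ and $\langle c, b\rangle$ from inside the weight class, and an appeal to Lemma~\ref{lm-notes-9b}. The only difference is that you explicitly isolate and justify the constant-shift identity $\weightclass{h+\delta} = \weightclass{h} + \delta$, which the paper uses silently in passing from its first to its second displayed line; making that step explicit is a small improvement in rigour, not a different argument.
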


\begin{proof}
Let $g(x) := f(x+b) + \langle c, x \rangle + f(b)$.
Then by change of variable $y:=x+b$,
\begin{align*}
\weightclass{g}
&=
\weightclass{y \mapsto f(y) + \langle c, y \rangle + \langle c, b \rangle + f(b)}
\\
&=
\weightclass{y \mapsto f(y) + \langle c, y \rangle} + \langle c, b \rangle + f(b)
\\
&=
\dual{f}(c) + \langle c, b \rangle + f(b),
\end{align*}
as a consequence of Lemma~\ref{lm-notes-9b}.
\end{proof}

\section{SageMath and CoCalc code}
\label{sec-Code}
The computational results listed in this paper were obtained by the
use of code written in Sage \cite{JoyEtAl13Sage} \cite{SageMath7517} and Python.
This code base is called \texttt{Boolean-Cayley-graphs} and it is available both as a GitHub
repository \cite{Leo16GitHub} and as a public CoCalc \cite{CoCalc} folder
\cite{Leo17CoCalc}. 
For an introduction to other aspects of coding theory and cryptography in Sage,
see the article by Joyner et al. \cite{JoyEtAl13Sage}.

\paragraph*{Description of the Sage code.}

This section contains a brief description of some of the code included in
\texttt{Boolean-Cayley-graphs}.
More detailed documentation is being developed and this is intended to be included as part of the code base.
The code itself is subject to review and revision, and may change as a result of the advice of
those more experienced with Sage code.
The description in this section applies to the code base as it exists in 2018.

The code base is structured as a set of Sage script files. 
These in turn use Python scripts, found in a subdirectory called \texttt{Boolean\_Cayley\_graphs}.
The Python code is used to define a number of useful Python classes.

The key class is \texttt{BentFunctionCayleyGraphClassification}.
This class is used to store the classification of Cayley graphs within the extended translation
class of a given bent function $f$, as well as the classification of Cayley graphs of the duals of
each function in the extended translation class.
The class therefore contains the algebraic normal form of the given bent function,
a list of graphs
stored as strings obtained via the \texttt{graph6\_string} \cite{McKP13nauty}
method of the \texttt{Graph} class, and two matrices,
used to store the list indices corresponding to
the Cayley graph for each bent function in the extended translation class, and the dual of each bent
function, respectively.
The class also contains the weight class matrix
corresponding to the given bent function.

The class is initialized by enumerating the bent functions of the form
$x \mapsto f(x+b) + \langle c, x \rangle + f(b)$,
and determining the Cayley graph of each.
For each Cayley graph, the \texttt{Graph} method \texttt{canonical\_label} is used
to invoke the Bliss package \cite{JunK07Bliss,JunK11conflict} to calculate the canonical label
of the graph, and then \texttt{graph6\_string} is used to obtain a string.
Each new graph is compared for isomorphism to each of the graphs in the current list,
by simply comparing the string against each of the existing strings.
If the new graph is not isomorphic to any existing graph, it is added to the list.
Each list of pairwise non-isomorphic graphs can be checked by a function called \texttt{check\_graph\_class\_list}
which uses the Nauty package to check the non-isomorphism \cite{McKP13nauty,McKP14practical}.

It is the efficiency of the Bliss canonical labelling algorithm, and the speed of its implementation, that makes this approach feasible.
Even so, for an 8 dimensional bent function, the initialization of its Cayley graph classification
can take more than 24 hours on an Intel\textregistered Core\texttrademark i7 CPU 870 running at 2.93 GHz.
For this reason, each computed classification is saved, and a class method (\texttt{load\_mangled})
is provided to load existing saved classifications.

\paragraph*{History of the Sage code.}

The Sage code originated in 2015 as a series of worksheets on Sage\-Math\-Cloud (now CoCalc).
While these were useful for investigating extended Cayley classes for bent functions in up to 6 dimensions,
they were too slow to use for bent functions in 8 dimensions.

The \texttt{Boolean-Cayley-graphs} GitHub project \cite{Leo16GitHub} and public Sage\-Math\-Cloud folder \cite{Leo17CoCalc} were begun in 2016
with the intention of refactoring the code to make it fast enough to use for bent functions in 8 dimensions
up to degree 3.
The use of canonical labelling made this possible.

Further improvements were made in 2017 to enable the classification of any bent function in 8 dimensions or less
to be computed in a reasonable time on a commodity personal computer.
In late 2017, code was added so that the Cayley graph classifications could be accessed via a relational database \cite{Leo18Database},
with implementations using SQLite3 \cite{SQLite} and PostgreSQL \cite{PostgreSQL}.
Also, parallel versions of the classification functions were written using MPI4Py,
and used on the NCI Raijin supercomputer to complete the classifications for CAST-128 and compute
the classifications of the $\mathcal{PS}^{(+)}$ bent functions in 8 dimensions.
In 2018, the code was continually improved, and computation of the
classifications of the $\mathcal{PS}^{(-)}$ bent functions in 8 dimensions was begun.

\section{Discussion}
\label{sec-Discussion}
The investigation of the extended Cayley classes of bent functions is just beginning, and there are many open questions.

The following questions have been settled only for dimensions 2, 4 and 6.
\small{}
\begin{enumerate}
\item
How many extended Cayley classes are there for each dimension?
Are there ``Exponential numbers'' of classes  \cite{Kan83exponential}?
\item
In $n$ dimensions,
which extended translation classes contain the maximum number, $4^n$, of different extended Cayley classes?
\item
Which extended Cayley classes overlap more than one extended translation class?
\item
Which bent functions are Cayley equivalent to their dual?
\end{enumerate}
\normalsize{}

In 8 dimensions, what are the extended affine and extended Cayley classes of bent functions of degree 4 \cite{LanL11counting}?
What are the key properties of the strongly regular graphs obtained as  Cayley graphs of bent functions, e.g., what is the distribution of clique numbers, clique polynomials, etc.? How do these distributions vary with the type of bent function, e.g., how do the distributions for the $\mathcal{PS}^{(-)}$ and $\mathcal{PS}^{(+)}$ partial spread bent functions differ from each other and from the general case?

Finally, how does the concept of extended Cayley classes of bent functions generalize to bent functions
over number fields of prime order $p \neq 2$ \cite{CheTZ11}?



\paragraph*{Acknowledgements.}
This work was begun in 2014 while the author was a Visiting Fellow at the Australian National University,
continued while the author was a Visiting Fellow and a Casual Academic at the University of Newcastle, Australia,
and concluded while the author was an Honorary Fellow at the University of Melbourne,
and an employee of the Bureau of Meteorology.

Thanks to Christine Leopardi for her hospitality at Long Beach.
Thanks to
Robert Craigen,
Joanne Hall,
Kathy Horadam,
David Joyner,
Philippe Langevin,
William Martin,
Padraig {\'O} Cath{\'a}in,
Judy-anne Osborn,
Dima Pasechnik and
William Stein
for valuable questions, discussions and advice;
David Joyner and Caroline Melles for suggestions for improvements based on the first draft of this paper;
Nathan Clisby, for the opportunity to become an Honorary Fellow at the University of Melbourne;
Kodlu, a member of MathOverflow, for asking Philippe Langevin about the affine classification
of bent functions of degree 4 in 8 dimensions;
Gray Chan for a citation tool for IETF RFCs;
and finally, thanks to the authors of SageMath, Bliss, and Nauty for these valuable tools
without which I would not have been able to conduct this research.

\newpage

\appendix

\section{Proof of Theorem \ref{th-Quadratic-Classes}}
\label{app-proof-of}

The proof of Theorem \ref{th-Quadratic-Classes} relies on a number of supporting lemmas,
which are stated and proved here.
\begin{Lemma}
\label{lm-notes-5}
Let $q(x) := x^T L x$ where $L \in \F_2^{2 m \times 2 m}$,
\begin{align*}
L
&:=
\left[
\begin{array}{cc}
0 & I
\\
0 & 0
\end{array}
\right],
\intertext{so that}
q(x) &= \sum_{k=0}^{m-1} x_k x_{m+k}.
\end{align*}

Let $f(x) := q(x+b) + \langle c,x \rangle + q(b)$.
Then there exists $c' \in \F_2^{2m}$ such that
\begin{align*}
f(x)
&=
q(x) + \langle c',x \rangle.
\end{align*}

\end{Lemma}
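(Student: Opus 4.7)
The plan is to expand $q(x+b)$ directly using the matrix description and collect the resulting terms, showing that everything except $q(x)$ is either linear in $x$ or independent of $x$.

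First I would write
\begin{align*}
q(x+b) &= (x+b)^T L (x+b) = x^T L x + x^T L b + b^T L x + b^T L b.
\end{align*}
The first term is $q(x)$ and the last is $q(b)$. Over $\F_2$, the scalar $b^T L x$ equals its own transpose $x^T L^T b$, so the two cross terms combine into $x^T (L+L^T) b = \langle (L+L^T) b,\, x \rangle$. Hence
\begin{align*}
q(x+b) &= q(x) + q(b) + \langle (L+L^T) b,\, x \rangle.
\end{align*}

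Substituting into the definition of $f$, and using that $q(b)+q(b)=0$ in $\F_2$, I would conclude
\begin{align*}
f(x) &= q(x) + \langle (L+L^T) b + c,\, x \rangle,
\end{align*}
so the lemma holds with $c' := (L+L^T) b + c$. (For the specific $L$ of the lemma, $L+L^T$ is the block-antidiagonal matrix with identity blocks, so $c'$ is obtained from $c$ by adding to $c$ the vector $b$ with its two halves swapped; but the explicit form of $c'$ is not needed for the statement.)

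There is essentially no obstacle here: the argument is just the standard observation that adding $b$ to the argument of a quadratic form changes it by a linear form in $x$ plus a constant, which in characteristic 2 is determined by the symmetric bilinear form associated with $L$, namely $x \mapsto \langle (L+L^T)b,\, x\rangle$. The only small care needed is to keep track of characteristic-2 cancellations, in particular that $q(b)$ appears twice and cancels, and that $b^T L x$ and $x^T L b$ add (rather than subtract) to produce the symmetrization $L+L^T$.
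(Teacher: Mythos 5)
Your proposal is correct and follows essentially the same route as the paper's own proof: expand $(x+b)^T L (x+b)$, combine the cross terms into $\langle (L+L^T)b,\,x\rangle$, cancel the two copies of $q(b)$ over $\F_2$, and take $c' = (L+L^T)b + c$. Nothing further is needed.
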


\begin{proof}
\begin{align*}
q(x) = x^T L x, \quad \text{so~}
q(x+b)
&=
(x^T+b^T) L (x+b)
\\
&= q(x) + x^T L b + b^T L x + q(b)
\\
&= q(x) + \langle (L + L^T) b, x \rangle + q(b),
\intertext{and therefore}
q(x+b) + \langle c, x \rangle + q(b)
&=
q(x) + \langle (L+L^T) b + c, x \rangle.
\end{align*}

\end{proof}

\begin{Lemma}
\label{lm-notes-3}
Let $Z \in \F_2^{2 m \times 2 m}$ be symmetric with zero diagonal.
In other words, $Z = Z^T$, $\diag{Z} = 0$.
Then for any $M \in \F_2^{2 m \times 2 m}$,
\begin{align*}
x^T (M + Z) x  &= x^T M x
\end{align*}
for all $x \in \F_2^{2 m}$.
\end{Lemma}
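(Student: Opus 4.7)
The plan is to show directly that $x^T Z x = 0$ for all $x \in \F_2^{2m}$ whenever $Z$ is symmetric with zero diagonal, since then linearity gives $x^T(M+Z)x = x^T M x + x^T Z x = x^T M x$.

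To handle $x^T Z x$, I would expand it as the double sum $\sum_{i,j} Z_{ij} x_i x_j$ and split it into diagonal and off-diagonal contributions. The diagonal part $\sum_i Z_{ii} x_i^2$ vanishes term by term because the hypothesis $\diag{Z} = 0$ says $Z_{ii} = 0$ for every $i$. For the off-diagonal part, I would pair each $(i,j)$ with $(j,i)$ and use symmetry $Z_{ij} = Z_{ji}$ to rewrite it as $\sum_{i<j} (Z_{ij} + Z_{ji}) x_i x_j = \sum_{i<j} 2 Z_{ij} x_i x_j$, which is $0$ in $\F_2$.

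There is no real obstacle here — the lemma is essentially the standard observation that in characteristic $2$ a symmetric matrix with zero diagonal determines the trivial quadratic form. The only thing to be careful about is the role of the zero-diagonal hypothesis: over $\F_2$, symmetry alone does not kill $x^T Z x$ (the diagonal entries survive because $x_i^2 = x_i$), which is precisely why $\diag{Z} = 0$ is needed. Once $x^T Z x = 0$ is established, the conclusion $x^T(M+Z)x = x^T M x$ follows by distributing the product, with no further calculation.
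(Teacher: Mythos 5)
Your proposal is correct and follows essentially the same route as the paper's own proof: expand $x^T Z x$ as a double sum, kill the diagonal terms using $\diag{Z}=0$, and pair the off-diagonal terms $(i,j)$ with $(j,i)$ so that symmetry gives $Z_{i,j}+Z_{j,i}=0$ over $\F_2$. Your added remark that the zero-diagonal hypothesis is genuinely needed (since $x_i^2 = x_i$ in characteristic $2$) is a correct and worthwhile observation, but the argument itself is the same.
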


\begin{proof}
Let $Z$, $x$ be as above.
Then
\begin{align*}
x^T Z x
&=
\sum_{i=0}^{2m-1} \sum_{j=0}^{2m-1} x_i Z_{i,j} x_j
\\
&=
\sum_{i=0}^{2m-1} \sum_{j<i} x_i Z_{i,j} x_j\, +
\sum_{i=0}^{2m-1} x_i Z_{i,i} x_i\, +
\sum_{i=0}^{2m-1} \sum_{j>i} x_i Z_{i,j} x_j
\\
&=
\sum_{i=0}^{2m-1} \sum_{j<i} x_i (Z_{i,j} + Z_{j,i})
= 0.
\intertext{Therefore}
x^T (M + Z) x  &= x^T M x + x^T Z x = x^T M x.
\end{align*}
\end{proof}

\begin{Lemma}
\label{lm-notes-4}
Let $q$ be defined as per Lemma \ref{lm-notes-5}.
Then for all $c \in Z_2^{2 m}$ with $q(c)=0$, there exists $A \in GL(2 m, 2)$ such that
\begin{align*}
q(A x) &= q(x) + \langle c, x \rangle.
\end{align*}
\end{Lemma}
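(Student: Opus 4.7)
The plan is to exhibit $A$ explicitly as a rank-one perturbation of the identity. Let $\sigma := L + L^T$, viewed as the coordinate-swap involution interchanging $x_k$ with $x_{m+k}$ for each $k = 0, \ldots, m-1$, and set
\[
A := I + \sigma(c)\, c^T.
\]

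First I would check that $A \in GL(2m, 2)$. Computing $A^2 = I + \sigma(c)\, (c^T \sigma(c))\, c^T$, and observing
\[
c^T \sigma(c) = \sum_{k=0}^{m-1} \bigl( c_k c_{m+k} + c_{m+k} c_k \bigr) = 2\, q(c) = 0
\]
in $\F_2$, we get $A^2 = I$, so $A$ is an involution and hence invertible.

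Next I would compute $q(Ax)$. Writing $Ax = x + \langle c, x \rangle\, \sigma(c)$, and applying the polar decomposition $q(x + y) = q(x) + q(y) + x^T(L + L^T) y$ (which follows from $q(x) = x^T L x$ exactly as in the proof of Lemma~\ref{lm-notes-5}), together with the identity $t^2 = t$ in $\F_2$, yields
\[
q(Ax) = q(x) + \langle c, x \rangle\, q(\sigma(c)) + \langle c, x \rangle\, x^T (L + L^T) \sigma(c).
\]
Since $\sigma$ merely permutes the paired coordinates $(k, m+k)$, we have $q(\sigma(c)) = q(c) = 0$ by hypothesis. Since $(L + L^T)^2 = I$, we have $(L + L^T)\sigma(c) = c$, so $x^T (L + L^T) \sigma(c) = \langle c, x \rangle$. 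Substituting gives $q(Ax) = q(x) + \langle c, x \rangle^2 = q(x) + \langle c, x \rangle$, as required.

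The only real obstacle is guessing the right $A$; the candidate $I + \sigma(c)\, c^T$ is the characteristic-two orthogonal analogue of a transvection attached to the isotropic vector $c$, and once it is written down the verification consists of two short $\F_2$ computations that hinge on the two facts $c^T \sigma(c) = 2 q(c)$ and $(L+L^T)^2 = I$.
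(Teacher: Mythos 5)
Your proof is correct, and it takes a genuinely different and considerably shorter route than the paper's. The paper reduces the problem to solving the matrix equation $A^T L A = L + S$ with $S = Z + C$, $C = \diag{c}$, and then constructs a block matrix $A = \bigl(\begin{smallmatrix} I & T+C_{1,1} \\ T+C_{0,0} & I \end{smallmatrix}\bigr)$, where $T$ encodes an arbitrary pairing of the (even-cardinality) set $K = \{k \mid c_k c_{m+k} = 1\}$; verifying $A^2 = I$ and $A^T L A = L+S$ then takes about two pages of block computation. You instead write down the single rank-one perturbation $A = I + \sigma(c)c^T$ with $\sigma = L + L^T$ and verify everything by polarization; the computation is a few lines, requires no arbitrary choices, and isolates exactly where the hypothesis $q(c)=0$ enters (in $q(\sigma(c)) = q(c) = 0$; note that if $q(c)=1$ your formula collapses to $q(Ax)=q(x)$, consistent with the lemma being false in that case). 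Both constructions yield involutions. One tiny quibble: your justification of invertibility writes $c^T\sigma(c) = 2q(c) = 0$ as if it used the hypothesis, but this quantity is $0$ in $\F_2$ for every $c$ simply because of the factor $2$; this is harmless and if anything makes the argument cleaner. What the paper's longer route buys is an explicit normal form for $S$ and the auxiliary identities (e.g.\ $A^T e^{(\ell)} = e^{(\ell)}$) that are reused verbatim in the proof of the subsequent lemma on the case $q(c')=1$; if you adopt your construction you would need to supply a replacement for that step, which is easy but not automatic.
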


\begin{proof}
Let $C \in \F_2^{2 m \times 2 m}$ be such that $C_{i,j} = \delta_{i,j} c_i$, where $\delta$ is the
\Emph{Dirac delta}: $\delta_{i,j}=1$ if $i=j$ and $0$ otherwise.
In other words $\diag{C} = c$.
Then
\begin{align*}
\langle c, x \rangle
&=
\sum_{i=0}^{2m-1} c_i x_i
\\
&=
\sum_{i=0}^{2m-1} x_i c_i x_i
=
x^T C x.
\end{align*}
Therefore, by Lemma \ref{lm-notes-3},
\begin{align*}
q(x) + \langle c, x \rangle
&=
x^T (L + Z + C) x,
\end{align*}
where $Z \in \F_2^{2 m \times 2 m}$ is symmetric with zero diagonal.

For such $Z$, let $S := Z + C$.
We want to find $A \in \F_2^{2 m \times 2 m}$ such that $q(A x) = q(x) + \langle c, x \rangle.$
In other words,
\begin{align*}
q(A x)
&=
(A x)^T L (A x)
=
x^T A^T L A x
=
x^T (L + S) x.
\end{align*}
This will be true if $A^T L A = L + S.$

Let
\begin{align*}
A
&:=
\left[
\begin{array}{cc}
A_{0,0} & A_{0,1}
\\
A_{1,0} & A_{1,1}
\end{array}
\right],
\quad
S
&:=
\left[
\begin{array}{cc}
S_{0,0} & S_{0,1}
\\
S_{0,1}^T & S_{1,1}
\end{array}
\right]
=:
\left[
\begin{array}{cc}
Z_{0,0} + C_{0,0} & Z_{0,1}
\\
Z_{0,1}^T & Z_{1,1} + C_{1,1}
\end{array}
\right].
\end{align*}
Since
\begin{align*}
L A
&=
\left[
\begin{array}{cc}
0 & I
\\
0 & 0
\end{array}
\right]
\left[
\begin{array}{cc}
A_{0,0} & A_{0,1}
\\
A_{1,0} & A_{1,1}
\end{array}
\right]
=
\left[
\begin{array}{cc}
A_{1,0} & A_{1,1}
\\
0 & 0
\end{array}
\right],
\end{align*}
we require that
\begin{align*}
A^T L A
&=
\left[
\begin{array}{cc}
A_{0,0} & A_{1,0}
\\
A_{0,1} & A_{1,1}
\end{array}
\right]
\left[
\begin{array}{cc}
A_{1,0} & A_{1,1}
\\
0 & 0
\end{array}
\right]
\\
&=
\left[
\begin{array}{cc}
A_{0,0}^T A_{1,0} & A_{0,0}^T A_{1,1}
\\
A_{0,1}^T A_{1,0} & A_{0,1}^T A_{1,1}
\end{array}
\right]
\\
&=
L + S
=
\left[
\begin{array}{cc}
S_{0,0} & I + S_{0,1}
\\
S_{0,1}^T & S_{1,1}
\end{array}
\right],
\end{align*}
and therefore
\begin{align*}
A_{0,0}^T A_{1,0}
&=
S_{0,0},
\quad
A_{0,0}^T A_{1,1}
=
I + S_{0,1},
\\
A_{0,1}^T A_{1,0}
&=
S_{0,1}^T,
\quad
A_{0,1}^T A_{1,1}
=
S_{1,1}.
\end{align*}
If $S_{0,1}=0$ and $A_{0,0}=I$ then
$A_{1,0}=S_{0,0}$, $A_{1,1}=I$ and $A_{0,1}=S_{1,1}$.
In this case, we have $A_{0,1}^T A_{1,0} = S_{0,1}^T = 0$,
i.e. $S_{1,1} S_{0,0} = 0$, and
\begin{align*}
A
&=
\left[
\begin{array}{cc}
I & S_{1,1}
\\
S_{0,0} & I
\end{array}
\right],
\intertext{so that}
A^T L A
&=
\left[
\begin{array}{cc}
I & S_{0,0}
\\
S_{1,1} & I
\end{array}
\right]
\left[
\begin{array}{cc}
S_{0,0} & I
\\
0 & 0
\end{array}
\right]
\\
&=
\left[
\begin{array}{cc}
S_{0,0} & I
\\
0 & S_{1,1}
\end{array}
\right]
\\
&=
L + S.
\end{align*}

Also
\begin{align*}
S
&=
\left[
\begin{array}{cc}
Z_{0,0} + C_{0,0} & 0
\\
0 & Z_{1,1} + C_{1,1}
\end{array}
\right].
\end{align*}

Since $q(c)=0$ we have
\begin{align*}
q(c)
&=
\sum_{k=0}^{m-1} c_k c_{m+k}
=
0.
\end{align*}
Let $K := \{ k \mid c_k c_{m+k} = 1 \}$.
Then we must have $\abs{K} = 2 r$ for some integer $r \geqslant 0$, i.e. $\abs{K}$ is even.
We therefore arbitrarily group the elements of $K$ into pairs $(i_p, j_p)$ for $p=0,\ldots,r-1$,
and define the matrix $T \in \F_2^{m \times m}$ by
\begin{align*}
T_{i,j}
&:=
\sum_{p=0}^{r-1} (\delta_{i,i_p} \delta_{j,j_p} + \delta_{i,j_p} \delta_{j,i_p}),
\end{align*}
so that
\begin{align*}
\begin{cases}
T_{i_p,j_p}
=
T_{j_p,i_p}
=
1
&\text{for~} p \in \{0,\ldots,r-1\},
\\
T_{i,j} = 0
&\text{otherwise.}
\end{cases}
\end{align*}
Since the $r$ pairs $(i_p, j_p)$ partition the set $K$,
the matrix $T$ has at most one non-zero in each row and column.

Recalling that
\begin{align*}
(T^2)_{i,j}
&=
\sum_{k=0}^{m-1} T_{i,k} T_{k,j},
\end{align*}
we see that the general term $T_{i,k} T_{k,j}$ of this sum is non-zero only if either
\begin{align*}
\begin{cases}
i = j = i_p,&\text{and}\ k=j_p,\ \text{or}
\\
i = j = j_p,&\text{and}\ k=i_p,
\end{cases}
\end{align*}
for some $p \in \{0,\ldots,r-1\}$, with all $2r$ of these cases being mutually exclusive.
So $T^2$ is diagonal with $2r$ non-zeros at the elements of $K$.

But $C_{1,1} C_{0,0}$ is diagonal, and $(C_{1,1} C_{0,0})_{i,i} = c_{m+i} c_i$.
Therefore
\begin{align}
T^2 &= C_{1,1} C_{0,0}.
\label{eq-t-2}
\end{align}

Now, let $Z_{0,0}=Z_{1,1}=T$. Then $S_{0,0} = T + C_{0,0}$, $S_{1,1} = T + C_{1,1}$, and
\begin{align*}
S_{1,1} S_{0,0}
&=
(T + C_{1,1})(T + C_{0,0})
=
T^2 + T C_{0,0} + C_{1,1} T + C_{1,1} C_{0,0}
\\
&=
T C_{0,0} + C_{1,1} T,
\end{align*}
where in the last step, we have used \eqref{eq-t-2}.

Now,
\begin{align*}
(T C_{0,0} + C_{1,1} T)_{i,j}
&=
\sum_{k=0}^{m-1} T_{i,k} (C_{0,0})_{k,j} + (C_{1,1})_{i,k} T_{k,j}
\\
&=
T_{i,j} (C_{0,0})_{j,j} + (C_{1,1})_{i,i} T_{i,j}
\\
&=
T_{i,j} \left( c_j + c_{m+i} \right).
\end{align*}
As above, $T_{i,j}$ is non-zero only when $(i,j)=(i_p,j_p)$ or $(i,j)=(j_p,i_p)$
for some $p \in \{0,\ldots,r-1\}$, but in all those cases $c_j=c_{m+j}=1$.

Therefore
\begin{align*}
S_{1,1} S_{0,0} &= T C_{0,0} + C_{1,1} T = 0.
\end{align*}
Similarly, $S_{0,0} S_{1,1} = 0$, and therefore
\begin{align*}
A^2
&=
\left[
\begin{array}{cc}
I & S_{1,1}
\\
S_{0,0} & I
\end{array}
\right]
\left[
\begin{array}{cc}
I & S_{1,1}
\\
S_{0,0} & I
\end{array}
\right]
\\
&=
\left[
\begin{array}{cc}
I + S_{1,1} S_{0,0} & S_{1,1} + S_{1,1}
\\
S_{0,0} + S_{0,0} & I + S_{0,0} S_{1,1}
\end{array}
\right]
=
\left[
\begin{array}{cc}
I & 0
\\
0 & I
\end{array}
\right].
\end{align*}

We have therefore shown that
\begin{align}
A
&:=
\left[
\begin{array}{cc}
I & T + C_{1,1}
\\
T + C_{0,0} & I
\end{array}
\right],
\quad
S
:=
\left[
\begin{array}{cc}
T + C_{0,0} & 0
\\
0 & T + C_{1,1}
\end{array}
\right]
\label{eq-a-s-def}
\end{align}
is a solution to $A^T L A = L + S$ with $A \in GL(2 m, 2)$.

Finally, given $c$ with $q(c)=0$, the matrix $A$ as defined by \eqref{eq-a-s-def} is such that
$q(A x) = q(x) + \langle c, x \rangle$.
\end{proof}

\begin{Lemma}
\label{lm-notes-6}
For $k \in \{0,\ldots,m-1\}$ define $e^{(k)}$ by
\begin{align}
e_i^{(k)} &:= \delta_{i,k} + \delta_{i,m+k}
\label{eq-e-def}
\end{align}
for $i \in \{0,\ldots,2 m - 1\}$.

Let $h(x) := q(x) + \langle e^{(0)}, x \rangle$, where $q$ is defined as per Lemma \ref{lm-notes-5}.
Then for any $c'$ such that $q(c')=1$, there exists $B \in GL(2 m, 2)$ such that
\begin{align}
h(B x) &= q(x) + \langle c',x \rangle.
\label{eq-h-B-x}
\end{align}
\end{Lemma}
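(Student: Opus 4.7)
My plan is to reduce to the special case $c'_0 = c'_m = 1$ via a coordinate permutation, and then to construct $B$ from the matrix $A$ supplied by Lemma~\ref{lm-notes-4}, exploiting the fact that this $A$ can be arranged to fix the coordinates at positions $0$ and $m$.

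First, I would use $q(c') = \sum_{k=0}^{m-1} c'_k c'_{m+k} = 1$ to pick an index $\kappa$ with $c'_\kappa c'_{m+\kappa} = 1$, and let $P \in GL(2m, 2)$ be the symmetric permutation matrix swapping $0 \leftrightarrow \kappa$ and $m \leftrightarrow m+\kappa$. Pairing the $k=0$ and $k=\kappa$ summands of $q$ shows $q \circ P = q$, and $c'' := Pc'$ satisfies $c''_0 = c''_m = 1$. It then suffices to find $A \in GL(2m, 2)$ with $h(Ay) = q(y) + \langle c'', y \rangle$, because $B := AP$ will yield
\begin{align*}
h(Bx) = h(A(Px)) = q(Px) + \langle c'', Px\rangle = q(x) + \langle Pc'', x \rangle = q(x) + \langle c', x \rangle,
\end{align*}
by the hypothesis on $A$, the facts $q \circ P = q$ and $P^T = P$, and the identity $Pc'' = P^2 c' = c'$.

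Second, I would set $d := c'' + e^{(0)}$, so that $d_0 = d_m = 0$. The polarisation of $q$ is $B_q(x,y) = \langle x, (L+L^T) y\rangle$, and a short computation gives $(L+L^T) e^{(0)} = e^{(0)}$, so
\begin{align*}
q(d) = q(c'') + q(e^{(0)}) + \langle c'', e^{(0)}\rangle = 1 + 1 + (c''_0 + c''_m) = 0.
\end{align*}
Lemma~\ref{lm-notes-4}, applied to $d$, then produces $A \in GL(2m, 2)$ with $q(Ay) = q(y) + \langle d, y \rangle$. The main obstacle I anticipate is the inspection of the explicit block form~\eqref{eq-a-s-def} of this $A$: because $d_0 = d_m = 0$, the index $0$ lies outside the set $K = \{k : d_k d_{m+k} = 1\}$ used to build $T$, so the $0$th row of $T$ vanishes, and $(C_{0,0})_{0,0} = d_0 = 0 = d_m = (C_{1,1})_{0,0}$; reading off \eqref{eq-a-s-def}, rows $0$ and $m$ of $A$ are then standard basis vectors, whence $(Ay)_0 = y_0$ and $(Ay)_m = y_m$ for every $y$.

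Granting that row-inspection, the special case is immediate:
\begin{align*}
h(Ay) &= q(Ay) + (Ay)_0 + (Ay)_m = q(y) + \langle d, y \rangle + y_0 + y_m \\
      &= q(y) + \langle d + e^{(0)}, y \rangle = q(y) + \langle c'', y \rangle,
\end{align*}
and $B := AP$ then satisfies \eqref{eq-h-B-x} by the reduction recorded above. The whole argument is conceptually light; the only nontrivial bookkeeping is verifying that Lemma~\ref{lm-notes-4}'s construction, when specialised to a vector $d$ vanishing in positions $0$ and $m$, leaves those two rows as standard basis vectors.
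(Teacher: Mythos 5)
Your proposal is correct and follows essentially the same route as the paper: both write the target linear part as $e^{(\cdot)}$ plus a vector $d$ with $q(d)=0$, invoke Lemma~\ref{lm-notes-4} to produce $A$ with $q(Ax)=q(x)+\langle d,x\rangle$, and then verify from the explicit block form \eqref{eq-a-s-def} that $A^T$ fixes the relevant $e^{(\cdot)}$ because the corresponding rows of $T$, $C_{0,0}$ and $C_{1,1}$ vanish. The only cosmetic difference is that you normalise the distinguished index to $0$ up front with the permutation $P$, whereas the paper works at an arbitrary $\ell \in K'$ and relegates the move to index $0$ to Lemma~\ref{lm-notes-6-b} (used in Lemma~\ref{lm-notes-7}); your version actually lands exactly on the stated equation \eqref{eq-h-B-x}.
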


\begin{proof}
Let $K'=\{k \mid c'_k c'_{m+k} = 1\}$. Since $q(c')=1$, $\abs{K'}$ is odd.
Choose any $\ell \in K'$, and let $c := c' + e^{(\ell)}$.
Then $c_{\ell} = c_{m+\ell} = 0$ and $q(c)=0$.

Now let $h^{(\ell)}(x) := q(x) + \langle e^{(\ell)}, x \rangle$.
We calculate
\begin{align*}
h^{(\ell)}(A x)
&=
q(A x) + \langle e^{(\ell)}, A x \rangle
=
q(x) + \langle c, x \rangle + \langle A^T e^{(\ell)}, x \rangle
\\
&=
q(x) + \langle c + A^T e^{(\ell)}, x \rangle
\end{align*}
for $A$ given by the proof of Lemma \ref{lm-notes-4}.

If we let $K := \{ k \mid c_k c_{m+k} = 1 \}$, we see that $K = K' \setminus \{\ell\}$.
Applying the other definitions and techniques used in the proof of Lemma \ref{lm-notes-4},
we see that since $c_{\ell} = c_{m+\ell} = 0$ and $K$ does not contain $\ell$,
column $\ell$ of each of $S_{0,0} := T + C_{0,0}$
and $S_{1,1} := T + C_{1,1}$ is $0$, and therefore columns $\ell$ and $m + \ell$ of
\begin{align*}
A^T + I
&:=
\left[
\begin{array}{cc}
I & T + C_{0,0}
\\
T + C_{1,1} & I
\end{array}
\right]
\end{align*}
are both $0$.
Therefore $A^T e^{(\ell)} = e^{(\ell)}$, and therefore
\begin{align*}
h^{(\ell)}(A x)
&=
q(x) + \langle c', x \rangle.
\end{align*}

\end{proof}

\begin{Lemma}
\label{lm-notes-6-b}
For distinct $k,\ell \in \{0,\ldots,m-1\}$ let $e^{(k)}, e^{(\ell)}$ be defined as per Lemma
\ref{lm-notes-6}.
Let $h(x) := q(x) + \langle e^{(k)}, x \rangle$, where $q$ is defined as per Lemma \ref{lm-notes-5}.
Then there exists $A \in GL(2 m, 2)$ such that
\begin{align}
h(A x)
&=
q(x) + \langle e^{(\ell)},x \rangle.
\end{align}
\end{Lemma}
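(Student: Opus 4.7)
The plan is to produce $A$ explicitly as a permutation matrix rather than to invoke Lemma \ref{lm-notes-6} in a roundabout way. Specifically, I would take $A \in GL(2m,2)$ to be the matrix of the permutation $\sigma$ of $\{0,\dots,2m-1\}$ that swaps $k \leftrightarrow \ell$, swaps $m+k \leftrightarrow m+\ell$, and fixes every other index. Since $\sigma$ is an involution, $A$ lies in $GL(2m,2)$ automatically.

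The first step is to verify $q(Ax) = q(x)$. This follows because $q(x) = \sum_{j=0}^{m-1} x_j x_{m+j}$ is a sum over the $m$ "pairs" $(j, m+j)$, and $\sigma$ merely swaps the pair $(k,m+k)$ with the pair $(\ell,m+\ell)$ while fixing the remaining pairs, so the sum is unchanged. The second step is to check that $\langle e^{(k)}, Ax \rangle = \langle e^{(\ell)}, x \rangle$; by the definition of $e^{(k)}$ in \eqref{eq-e-def}, this inner product equals $(Ax)_k + (Ax)_{m+k} = x_{\sigma(k)} + x_{\sigma(m+k)} = x_\ell + x_{m+\ell}$, which is exactly $\langle e^{(\ell)}, x \rangle$. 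Combining the two gives $h(Ax) = q(Ax) + \langle e^{(k)}, Ax \rangle = q(x) + \langle e^{(\ell)}, x \rangle$, as required.

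There is no serious obstacle: the whole point is that $q$ and the linear forms $\langle e^{(i)}, \cdot \rangle$ are compatible with the natural $S_m$-action on the $m$ coordinate pairs, and the transposition exchanging the $k$-th and $\ell$-th pairs is visibly in this action. As a sanity check and alternative route, one can also apply Lemma \ref{lm-notes-6} twice, with $c' = e^{(k)}$ and $c' = e^{(\ell)}$ (both satisfy $q(c')=1$ since $k \neq \ell$ plays no role in that hypothesis), obtaining matrices $B_k, B_\ell \in GL(2m,2)$ with $h^{(0)}(B_k x) = q(x) + \langle e^{(k)}, x \rangle$ and $h^{(0)}(B_\ell x) = q(x) + \langle e^{(\ell)}, x \rangle$, and then take $A := B_k^{-1} B_\ell$; but this is strictly weaker in content and less transparent than the permutation argument above, so the direct construction is what I would present.
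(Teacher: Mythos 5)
Your proposal is correct and takes essentially the same approach as the paper: the paper's proof consists of the single sentence that $A$ is the permutation matrix of $(k\ \ell)(m+k\ m+\ell)$, and your argument simply supplies the verification that this permutation preserves $q$ and sends $\langle e^{(k)},\cdot\rangle$ to $\langle e^{(\ell)},\cdot\rangle$.
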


\begin{proof}
The matrix $A$ is the permutation matrix for the the permutation $(k\ \ell)(m+k\ m+\ell)$ (defined
using cycle notation.)
\end{proof}

\begin{Lemma}
\label{lm-notes-7}
Let $q$ be defined as per Lemma \ref{lm-notes-5}.
Then for all $c, c' \in Z_2^{2 m}$ with $q(c)=q(c')=1$, there exists $A \in GL(2 m, 2)$ such that
if $h(x) := q(x) + \langle c, x \rangle$, then
\begin{align*}
h(A x) &= q(x) + \langle c', x \rangle.
\end{align*}
\end{Lemma}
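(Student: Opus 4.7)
The plan is to treat general linear equivalence as the orbit relation of $GL(2m,2)$ acting on Boolean functions via $(A,f) \mapsto f \circ A$, and to use Lemma \ref{lm-notes-6} as a normal form reduction. Since this orbit relation is symmetric and transitive, it suffices to show that every function of the form $h_c(x) := q(x) + \langle c, x \rangle$ with $q(c)=1$ lies in a single fixed $GL(2m,2)$-orbit. A convenient base point is $h_{e^{(0)}}(x) := q(x) + \langle e^{(0)}, x \rangle$, since Lemma \ref{lm-notes-6} already ties every admissible $c'$ to $e^{(0)}$.

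Concretely, I would first apply Lemma \ref{lm-notes-6} with its ``$c'$'' specialized to the given $c$. This is legitimate since $q(c)=1$ by hypothesis, and it produces $B_c \in GL(2m,2)$ with $h_{e^{(0)}}(B_c\, x) = h_c(x)$. Then I would apply Lemma \ref{lm-notes-6} a second time, with its ``$c'$'' specialized to the given $c'$, again legitimately since $q(c')=1$. This produces $B_{c'} \in GL(2m,2)$ with $h_{e^{(0)}}(B_{c'}\, x) = h_{c'}(x)$.

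Finally, I would set $A := B_c^{-1} B_{c'}$, which lies in $GL(2m,2)$ because the invertible matrices form a group. A direct substitution then gives
\begin{align*}
h_c(A x) = h_{e^{(0)}}\bigl(B_c B_c^{-1} B_{c'}\, x\bigr) = h_{e^{(0)}}(B_{c'}\, x) = h_{c'}(x),
\end{align*}
which is the conclusion of the lemma.

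The main obstacle has already been surmounted in the proof of Lemma \ref{lm-notes-6} (which in turn leans on Lemma \ref{lm-notes-4}): the difficult constructive step of exhibiting an explicit matrix realizing each orbit equivalence, in terms of the pairing matrix $T$ and the diagonal blocks $C_{0,0}, C_{1,1}$, was carried out there. The present lemma then reduces to the formal observation that equivalence to a common base point yields equivalence to one another, so no further calculation beyond the composition above is required.
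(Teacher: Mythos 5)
Your proof is correct and matches the paper's (one-line) argument, which simply declares the lemma a consequence of Lemmas \ref{lm-notes-6} and \ref{lm-notes-6-b}: reduce both $h_c$ and $h_{c'}$ to the common base point $q + \langle e^{(0)}, \cdot \rangle$ and compose one transformation with the inverse of the other. The only difference is that you rely on Lemma \ref{lm-notes-6} exactly as stated and therefore never invoke Lemma \ref{lm-notes-6-b}, which the paper cites only because the written proof of Lemma \ref{lm-notes-6} actually normalizes to $e^{(\ell)}$ for an $\ell$ depending on $c'$ rather than to $e^{(0)}$ itself.
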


\begin{proof}
This is a consequence of Lemmas \ref{lm-notes-6} and \ref{lm-notes-6-b}.
\end{proof}

\begin{proofof}{Theorem \ref{th-Quadratic-Classes}}
It is well known that all quadratic bent functions are contained in one Extended Affine equivalence
class.
As a consequence of Corollary \ref{corr-Affine-Translate-Cayley}, without loss of generality, we need
only examine
the Extended Translation equivalence class of the quadratic function $q$ as defined in Lemma
\ref{lm-notes-5}.

As a result of Lemma \ref{lm-notes-5}, we actually need only examine functions of the form
$f(x) = q(x) + \langle c,x \rangle$
for some $c \in \F_2^{2m}$.
Lemma \ref{lm-notes-4} implies that all such functions for which $q(c)=0$ are Cayley equivalent to
$q$.
Lemma \ref{lm-notes-7} implies that any two such functions $q(x) + \langle c, x \rangle$ and $q(x)\, +
\langle c', x \rangle$
with $q(c)=q(c')=1$ are Cayley equivalent to each other.

The functions where $q(c)=0$ are not Cayley equivalent to the functions where $q(c)=1$ because
Lemma \ref{lm-notes-9b} implies that
\begin{align*}
\weightclass{x \mapsto q(x) + \langle c,x \rangle}
&=
\dual{q}(c) = q(c),
\end{align*}
since $q$ is self-dual.
\end{proofof}


\section{Computational results for low dimensions}
\label{sec-Empirical}
This section lists some properties of bent functions and their extended affine (EA) classes, extended translation (ET) classes, and extended Cayley classes
that have been computed for 2, 4, 6 and 8 dimensions.
The computations were made using Sage \cite{SageMath7517} and CoCalc \cite{CoCalc}.
Larger scale computations, involving millions of ET classes,
were conducted on the Raijin supercomputer of the National Computational Infrastructure.
Sage and Python code for these computations are available on GitHub \cite{Leo16GitHub} and CoCalc \cite{Leo17CoCalc}.
Some CoCalc worksheets also illustrate these and related computations \cite{Leo17CoCalc}.
The Sage and Python code is briefly described in Section \ref{sec-Code}.

In the tables below, each bent function is defined by its algebraic normal form, and each Cayley class is described by
its number within the ET class of the bent function (from 0, in the order in which Sage identified non-isomorphic graphs),
followed by three properties of the Cayley graph: its parameters as a strongly regular graph,
the 2-rank of its adjacency matrix \cite{Brov92}, and its clique polynomial \cite{HoeL94}.
The 2-rank is included for comparison with Tonchev's tables of 2-weight codes in 6 dimensions \cite{Ton96uniformly,Ton07codes}.
The clique polynomial is included for interest's sake, and to illustrate the variety of strongly regular graphs
that exist with the same parameters, even for low dimensions.

The plots below are produced by the function \texttt{sage.}\texttt{plot.}\texttt{matrix\_plot},
with \texttt{gist\_stern} as the \texttt{colormap}.
Thus the smallest number is coloured black and the largest number is coloured white.

The plotted matrices all contain non-negative integers.
The weight class matrices are defined by Definition~\ref{def-weight-class-matrix}, and are $\{0,1\}$ matrices,
so their matrix plots are therefore black and white, with black representing 0 and white representing 1.
The other matrices record the number of the Cayley class within the
ET class, starting from 0, as per corresponding table of extended Cayley classes.

Some highlights of the computational results include:
\begin{enumerate}
\item Verification of Theorem \ref{th-Quadratic-Classes}: the quadratic bent functions have two Cayley classes
 corresponding to the two weight classes.
\item In 6 dimensions, identification of the Cayley classes corresponding to
Tonchev's tables of 2-weight codes \cite{Ton96uniformly,Ton07codes}.
\item In 6 and 8 dimensions, extended Cayley equivalence between a quadratic bent function
and a bent function of degree 3.
In each case, the isomorphism between Cayley graphs is not a linear function on $\F_2^{2m}$.
\item In 8 dimensions, the result that two of Braeken's extended affine classes of bent functions of degree
at most 3 \cite{Bra06thesis, Tok15bent} are actually the same class.
Thus the list only contains 9 distinct classes and not 10.
\item In 8 dimensions, 8 of the 256 bent functions used for the S-boxes of the CAST-128 cypher \cite{RFC2144,Ada97}
are exceptional in the sense that,
for each of these 8 bent functions, the $65\,536$ bent functions in the extended translation class,
and their  $65\,536$ duals do not yield $131\,072$ distinct Cayley graphs.
In contrast, for the remaining 248 bent functions, these $131\,072$ Cayley graphs are all non-isomorphic.
\end{enumerate}

\subsection{Bent functions in 2 dimensions}
The bent functions on $\F_2^2$ consist of one EA class, containing the ET class: $[f_{2,1}]$
where $f_{2,1}(x) := x_0 x_1$ is self dual.
The ET class contains two extended Cayley classes as per Table~\ref{tab-c2_1_EC_classes}.
Note that the Cayley graph for class 1 is $K_4$, which is not considered to be strongly regular, by convention.
\begin{table}[!bhpt] 
\small{
\begin{align*}
\def\arraystretch{1.2}
\begin{array}{|cccl|}
\hline
\text{Class} &
\text{Parameters} &
\text{2-rank} &
\text{Clique polynomial}
\\
\hline
0 &
(4, 1, 0, 0) &
4 &
\begin{array}{l}
2t^{2} + 4t + 1
\end{array}
\\
1 &
K_4 &
4 &
\begin{array}{l}
t^{4} + 4t^{3} + 6t^{2} + 4t + 1
\end{array}
\\
\hline
\end{array}
\end{align*}
}
\caption{$[f_{2,1}]$ extended Cayley classes.}
\label{tab-c2_1_EC_classes}
\end{table}

\begin{figure}[!ht]
\centering
\begin{minipage}{.48\textwidth}
  \centering
  \includegraphics[width=.9\linewidth]{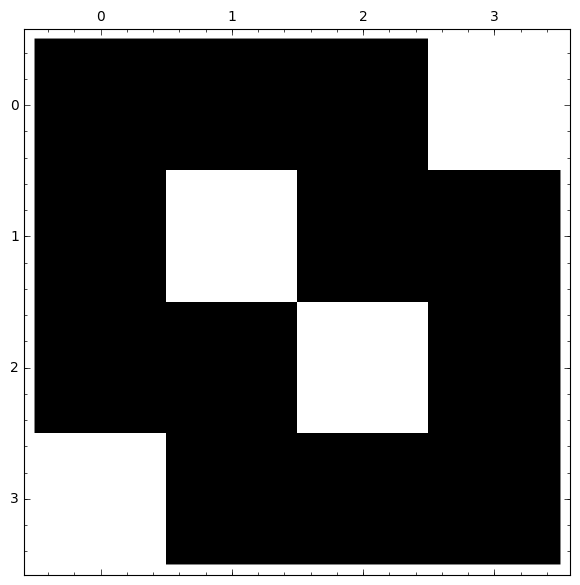}
  \captionof{figure}{$[f_{2,1}]$: weight classes. ~~~~\\~~~~}
  \label{fig:c2_1_weight_class_matrix}
\end{minipage}%
~~~~
\begin{minipage}{.48\textwidth}
  \centering
  \includegraphics[width=.9\linewidth]{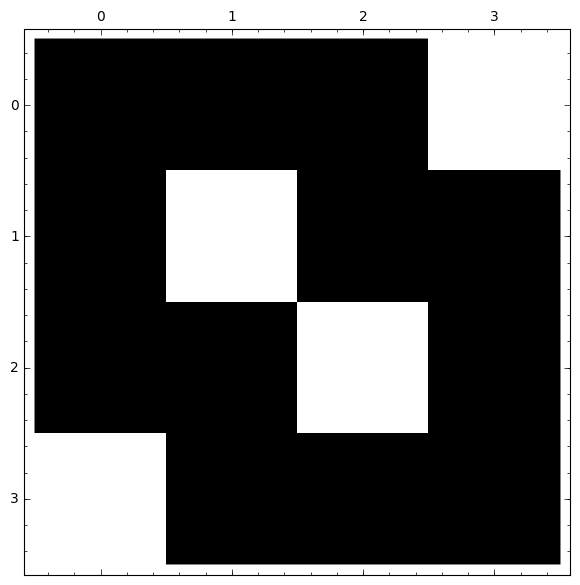}
  \captionof{figure}{$[f_{2,1}]$: extended Cayley classes.}
  \label{fig:c2_1_bent_cayley_graph_index_matrix}
\end{minipage}
\end{figure}
As expected from Theorem~\ref{th-Quadratic-Classes},
the two extended Cayley classes correspond to the two weight classes,
as shown in Figures~\ref{fig:c2_1_weight_class_matrix} and~\ref{fig:c2_1_bent_cayley_graph_index_matrix}.

\subsection{Bent functions in 4 dimensions}
The bent functions on $\F_2^4$ consist of one EA class, containing the ET class $[f_{4,1}]$ where
$f_{4,1}(x) := x_0 x_1 + x_2 x_3$ is self dual.
The ET class contains two extended Cayley classes as per Table~\ref{tab-c4_1_EC_classes}.
\begin{table}[!bhpt] 
\small{
\begin{align*}
\def\arraystretch{1.2}
\begin{array}{|cccl|}
\hline
\text{Class} &
\text{Parameters} &
\text{2-rank} &
\text{Clique polynomial}
\\
\hline
0 &
(16, 6, 2, 2) &
6 &
\begin{array}{l}
8t^{4} + 32t^{3} + 48t^{2} + 16t + 1
\end{array}
\\
1 &
(16, 10, 6, 6) &
6 &
\begin{array}{l}
16t^{5} + 120t^{4} + 160t^{3}
\,+
\\
 80t^{2} + 16t + 1
\end{array}
\\
\hline
\end{array}
\end{align*}
}
\caption{$[f_{4,1}]$ extended Cayley classes.}
\label{tab-c4_1_EC_classes}
\end{table}

%

\begin{figure}[!bhpt] 
\centering
\begin{minipage}{.48\textwidth}
  \centering
  \includegraphics[width=.9\linewidth]{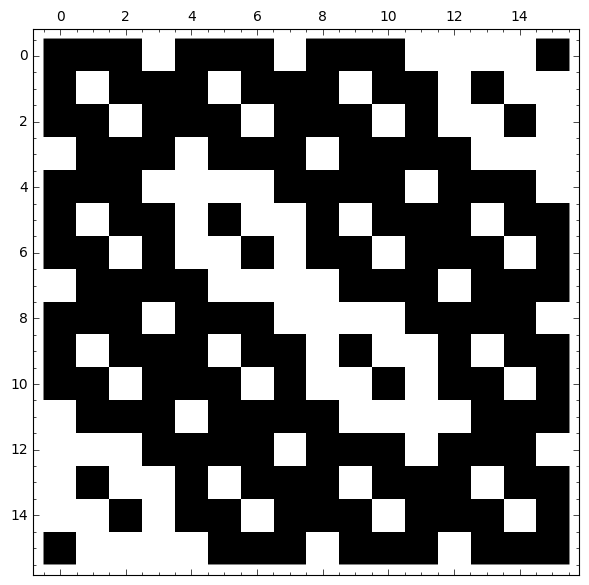}
  \captionof{figure}{$[f_{4,1}]$: weight classes. ~~~~\\~~~~}
  \label{fig:c4_1_weight_class_matrix}
\end{minipage}%
~~~~
\begin{minipage}{.48\textwidth}
  \centering
  \includegraphics[width=.9\linewidth]{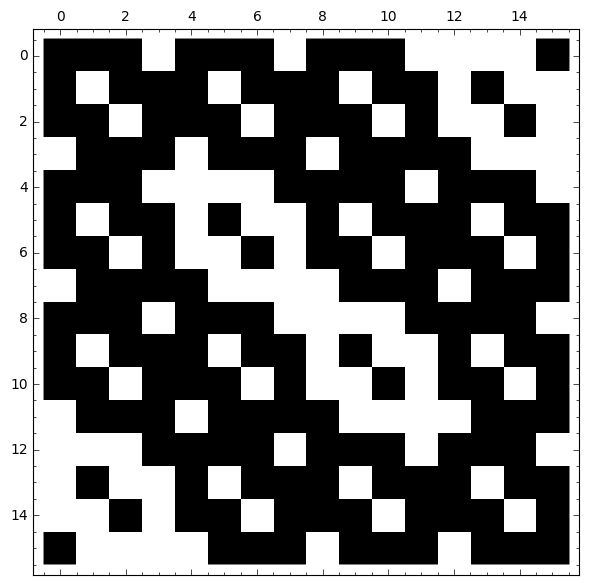}
  \captionof{figure}{$[f_{4,1}]$: extended Cayley classes.}
  \label{fig:c4_1_bent_cayley_graph_index_matrix}
\end{minipage}
\end{figure}
The two extended Cayley classes correspond to the two weight classes,
as shown in Figures~\ref{fig:c4_1_weight_class_matrix} and~\ref{fig:c4_1_bent_cayley_graph_index_matrix}.

\newpage
\subsection{Bent functions in 6 dimensions}
\paragraph*{Extended affine classes.}
The bent functions on $\F_2^6$ consist of four
EA classes, containing the ET classes as listed in Table~\ref{tab-c6_ET_classes}
\cite[p. 303]{Rot76} \cite[Section 7.2]{Tok15bent}.
\begin{table}[!bhpt] 
\small{
\begin{align*}
\def\arraystretch{1.2}
\begin{array}{|cl|}
\hline
\text{Class} &
\text{Representative}
\\
\hline
\,[f_{6,1}] & f_{6,1} :=
\begin{array}{l}
x_{0} x_{1} + x_{2} x_{3} + x_{4} x_{5}
\end{array}
\\
\,[f_{6,2}] & f_{6,2} :=
\begin{array}{l}
x_{0} x_{1} x_{2} + x_{0} x_{3} + x_{1} x_{4} + x_{2} x_{5}
\end{array}
\\
\,[f_{6,3}] & f_{6,3} :=
\begin{array}{l}
x_{0} x_{1} x_{2} + x_{0} x_{1} + x_{0} x_{3} + x_{1} x_{3} x_{4} + x_{1} x_{5}\, +
\\
x_{2} x_{4} + x_{3} x_{4}
\end{array}
\\
\,[f_{6,4}] & f_{6,4} :=
\begin{array}{l}
x_{0} x_{1} x_{2} + x_{0} x_{3} + x_{1} x_{3} x_{4} + x_{1} x_{5} + x_{2} x_{3} x_{5}\, +
\\
x_{2} x_{3} + x_{2} x_{4} + x_{2} x_{5} + x_{3} x_{4} + x_{3} x_{5}
\end{array}
\\
\hline
\end{array}
\end{align*}
}
\caption{6 dimensions: ET classes.}
\label{tab-c6_ET_classes}
\end{table}

In 1996, Tonchev classified the binary projective two-weight $[27,21,3]$ and $[35,6,16]$ codes
listing them in Tables 1 and 2, respectively, of his paper \cite{Ton96uniformly}.
These tables are repeated as Tables 1.155 and 1.156 in Chapter VII.1 of the Handbook of
Combinatorial Designs, Second Edition \cite{Ton07codes},
with a different numbering.
For each of the codes listed in these two tables, the characteristics of the corresponding
strongly regular graph is also listed.

In the classification given below, the Cayley graph of each Cayley class is matched by isomorphism
with a strongly regular graph corresponding to one
or more of Tonchev's projective two-weight codes, or the complement of such a graph.
Tonchev's strongly regular graphs were checked using the function
\verb!strongly_regular_from_two_weight_code!, which uses the smaller of the two weights to create the graph
\cite{SageMath7517}.
%
\paragraph*{ET class $[f_{6,1}]$.}
This is the ET class of the bent function
$f_{6,1}(x) := x_0 x_1 + x_2 x_3 + x_4 x_5.$
This function is quadratic and self-dual.

The ET class contains two extended Cayley classes as per Table~\ref{tab-c6_1_EC_classes}.

\begin{table}[!bhpt] 
\small{}
\begin{align*}
\def\arraystretch{1.2}
\begin{array}{|cccl|}
\hline
\text{Class} &
\text{Parameters} &
\text{2-rank} &
\text{Clique polynomial}
\\
\hline
0 &
(64, 28, 12, 12) &
8 &
\begin{array}{l}
64t^{8} + 512t^{7} + 1792t^{6} + 3584t^{5}
\,+
\\
 5376t^{4} + 3584t^{3} + 896t^{2} + 64t + 1
\end{array}
\\
1 &
(64, 36, 20, 20) &
8 &
\begin{array}{l}
2304t^{6} + 13824t^{5} + 19200t^{4} + 7680t^{3}
\,+
\\
 1152t^{2} + 64t + 1
\end{array}
\\
\hline
\end{array}
\end{align*}
\caption{$[f_{6,1}]$ extended Cayley classes.}
\label{tab-c6_1_EC_classes}
\end{table}

The Cayley graphs for classes 0 and 1 are isomorphic to those those obtained from
Tonchev's projective two-weight codes \cite{Ton07codes} as per Table~\ref{tab-c6_1_codes}.

\begin{table}[!bhpt] 
\small{
\begin{align*}
\def\arraystretch{1.2}
\begin{array}{|ccl|}
\hline
\text{Class} &
\text{Parameters} & \text{Reference}
\\
\hline
0 & [35,6,16] & \text{Table 1.156 1, 2 (complement)}
\\
1 & [27,6,12] & \text{Table 1.155 1 }
\\
\hline
\end{array}
\end{align*}
}
\caption{$[f_{6,1}]$ Two-weight projective codes.}
\label{tab-c6_1_codes}
\end{table}


The two extended Cayley classes correspond to the two weight classes,
as shown in Figures~\ref{fig:c6_1_weight_class_matrix} and~\ref{fig:c6_1_bent_cayley_graph_index_matrix}.

\begin{figure}[!bhpt] 
\centering
\begin{minipage}{.48\textwidth}
  \centering
  \includegraphics[width=.9\linewidth]{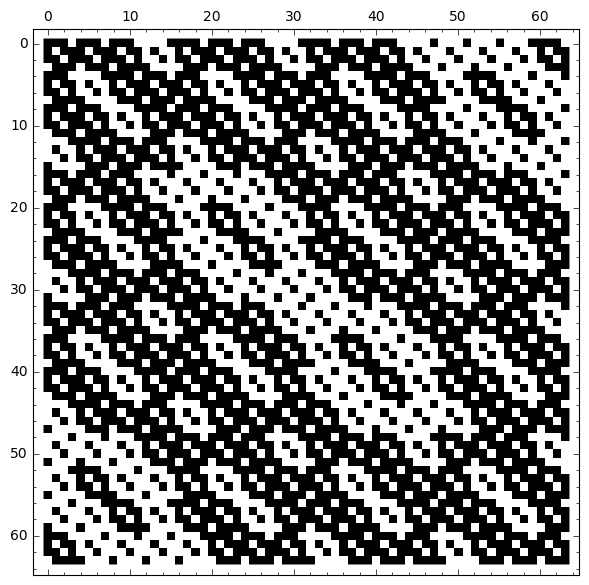}
  \captionof{figure}{$[f_{6,1}]$: weight classes. ~~~~\\~~~~}
  \label{fig:c6_1_weight_class_matrix}
\end{minipage}%
~~~~
\begin{minipage}{.48\textwidth}
  \centering
  \includegraphics[width=.9\linewidth]{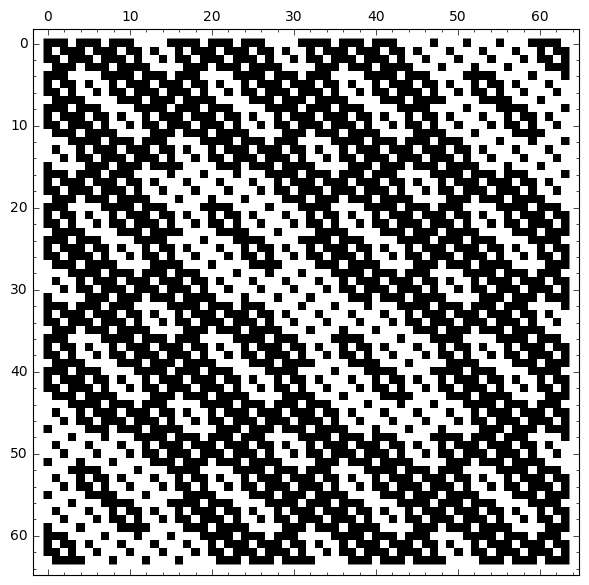}
  \captionof{figure}{$[f_{6,1}]$: extended Cayley classes.}
  \label{fig:c6_1_bent_cayley_graph_index_matrix}
\end{minipage}
\end{figure}
Remark: The sequence of Figures~\ref{fig:c2_1_weight_class_matrix}, \ref{fig:c4_1_weight_class_matrix},
and \ref{fig:c6_1_weight_class_matrix} displays a fractal-like self-similar quality.

\paragraph*{ET class $[f_{6,2}]$.}
This is the ET class of the bent function
$f_{6,2}(x) := x_{0} x_{1} x_{2} + x_{0} x_{3} + x_{1} x_{4} + x_{2} x_{5}$.

The ET class contains three extended Cayley classes as per Table~\ref{tab-c6_2_EC_classes}.

\begin{table}[!bhpt] 
\small{}
\begin{align*}
\def\arraystretch{1.2}
\begin{array}{|cccl|}
\hline
\text{Class} &
\text{Parameters} &
\text{2-rank} &
\text{Clique polynomial}
\\
\hline
0 &
(64, 28, 12, 12) &
8 &
\begin{array}{l}
64t^{8} + 512t^{7} + 1792t^{6} + 3584t^{5}
\,+
\\
 5376t^{4} + 3584t^{3} + 896t^{2} + 64t + 1
\end{array}
\\
1 &
(64, 28, 12, 12) &
8 &
\begin{array}{l}
256t^{6} + 1536t^{5} + 4352t^{4} + 3584t^{3}
\,+
\\
 896t^{2} + 64t + 1
\end{array}
\\
2 &
(64, 36, 20, 20) &
8 &
\begin{array}{l}
192t^{8} + 1536t^{7} + 8960t^{6} + 19968t^{5}
\,+
\\
 20224t^{4} + 7680t^{3} + 1152t^{2} + 64t + 1
\end{array}
\\
\hline
\end{array}
\end{align*}
\caption{$[f_{6,2}]$ extended Cayley classes.}
\label{tab-c6_2_EC_classes}
\end{table}

The Cayley graph for class 0 is isomorphic to graph 0 of ET class $[f_{6,1}]$,
This reflects the fact that $f_{6,1} \equiv f_{6,2}$, even though these two functions are not
EA equivalent.
This is therefore an example of an isomorphism between Cayley graphs of bent functions on
$\F_2^6$ that is not a linear function.

The Cayley graph for class 0 is also isomorphic to the complement of Royle's $(64,35,18,20)$ strongly regular graph $X$
\cite{Roy08normal}.

%
%

The Cayley graphs for classes 0 to 2 are isomorphic to those those obtained from
Tonchev's projective two-weight codes \cite{Ton07codes} as per Table~\ref{tab-c6_2_codes}.

\begin{table}[!bhpt] 
\small{
\begin{align*}
\def\arraystretch{1.2}
\begin{array}{|ccl|}
\hline
\text{Class} &
\text{Parameters} & \text{Reference}
\\
\hline
0 & [35,6,16] & \text{Table 1.156 1, 2 (complement)}
\\
1 & [35,6,16] & \text{Table 1.156 3 (complement)}
\\
2 & [27,6,12] & \text{Table 1.155 2 }
\\
\hline
\end{array}
\end{align*}
}
\caption{$[f_{6,2}]$ Two-weight projective codes.}
\label{tab-c6_2_codes}
\end{table}

The three extended Cayley classes are distributed between the two weight classes,
as shown in Figures~\ref{fig:c6_2_weight_class_matrix} and~\ref{fig:c6_2_bent_cayley_graph_index_matrix}.

\begin{figure}[!bhpt] 
\centering
\begin{minipage}{.48\textwidth}
  \centering
  \includegraphics[width=.9\linewidth]{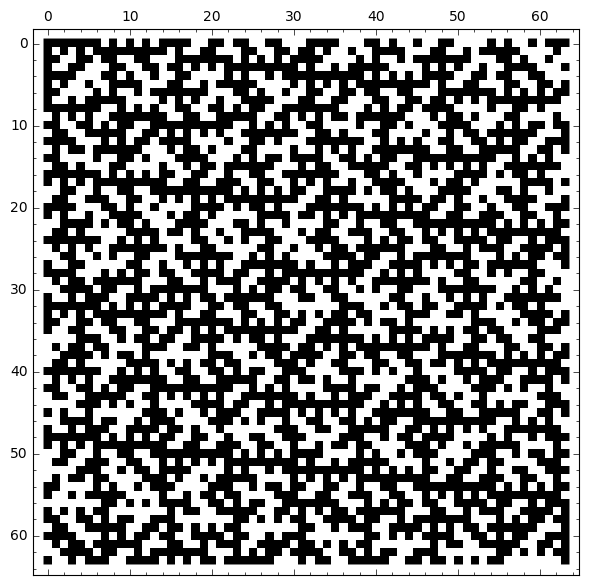}
  \captionof{figure}{$[f_{6,2}]$: weight classes. ~~~~\\~~~~}
  \label{fig:c6_2_weight_class_matrix}
\end{minipage}%
~~~~
\begin{minipage}{.48\textwidth}
  \centering
  \includegraphics[width=.9\linewidth]{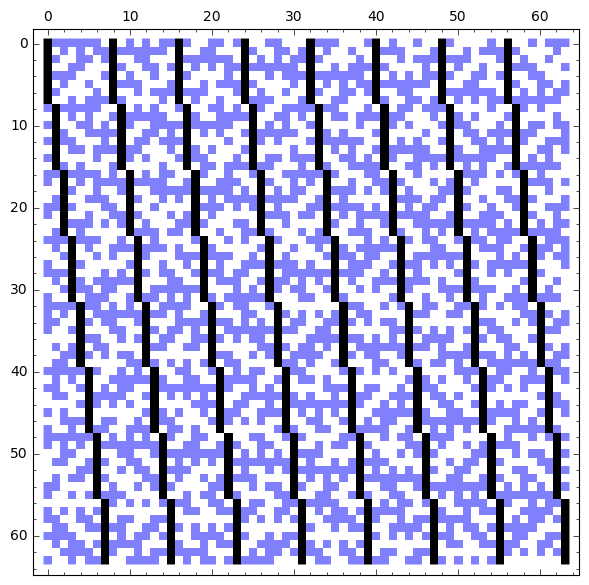}
  \captionof{figure}{$[f_{6,2}]$: extended Cayley classes.}
  \label{fig:c6_2_bent_cayley_graph_index_matrix}
\end{minipage}
\end{figure}

%
\paragraph*{ET class $[f_{6,3}]$.}
This is the ET class of the bent function
\begin{align*}
f_{6,3}(x) &= x_{0} x_{1} x_{2} + x_{0} x_{1} + x_{0} x_{3} + x_{1} x_{3} x_{4}
\\
           &+ x_{1} x_{5} + x_{2} x_{4} + x_{3} x_{4}.
\end{align*}

The ET class contains four extended Cayley classes as per Table~\ref{tab-c6_3_EC_classes}.

\begin{table}[!bhpt] 
\small{}
\begin{align*}
\def\arraystretch{1.2}
\begin{array}{|cccl|}
\hline
\text{Class} &
\text{Parameters} &
\text{2-rank} &
\text{Clique polynomial}
\\
\hline
0 &
(64, 28, 12, 12) &
12 &
\begin{array}{l}
32t^{8} + 256t^{7} + 896t^{6} + 2048t^{5} + 4608t^{4}
\,+
\\
 3584t^{3} + 896t^{2} + 64t + 1
\end{array}
\\
1 &
(64, 36, 20, 20) &
12 &
\begin{array}{l}
160t^{8} + 1280t^{7} + 9344t^{6} + 21504t^{5}
\,+
\\
 20480t^{4} + 7680t^{3} + 1152t^{2} + 64t + 1
\end{array}
\\
2 &
(64, 28, 12, 12) &
12 &
\begin{array}{l}
64t^{6} + 1024t^{5} + 4096t^{4} + 3584t^{3}
\,+
\\
 896t^{2} + 64t + 1
\end{array}
\\
3 &
(64, 36, 20, 20) &
12 &
\begin{array}{l}
160t^{8} + 1664t^{7} + 9792t^{6} + 21504t^{5}
\,+
\\
 20480t^{4} + 7680t^{3} + 1152t^{2} + 64t + 1
\end{array}
\\
\hline
\end{array}
\end{align*}
\caption{$[f_{6,3}]$ extended Cayley classes.}
\label{tab-c6_3_EC_classes}
\end{table}

The Cayley graphs for classes 0 to 3 are isomorphic to those those obtained from
Tonchev's projective two-weight codes \cite{Ton07codes} as per Table~\ref{tab-c6_3_codes}.

\begin{table}[!bhpt] 
\small{
\begin{align*}
\def\arraystretch{1.2}
\begin{array}{|ccl|}
\hline
\text{Class} &
\text{Parameters} & \text{Reference}
\\
\hline
0 & [35,6,16] & \text{Table 1.156 4 (complement)}
\\
1 & [27,6,12] & \text{Table 1.155 3 }
\\
2 & [35,6,16] & \text{Table 1.156 5 (complement)}
\\
3 & [27,6,12] & \text{Table 1.155 4 }
\\
\hline
\end{array}
\end{align*}
}
\caption{$[f_{6,3}]$ Two-weight projective codes.}
\label{tab-c6_3_codes}
\end{table}

The four extended Cayley classes are distributed between the two weight classes,
as shown in Figures~\ref{fig:c6_3_weight_class_matrix} and~\ref{fig:c6_3_bent_cayley_graph_index_matrix}.

\begin{figure}[!ht] 
\centering
\begin{minipage}{.48\textwidth}
  \centering
  \includegraphics[width=.9\linewidth]{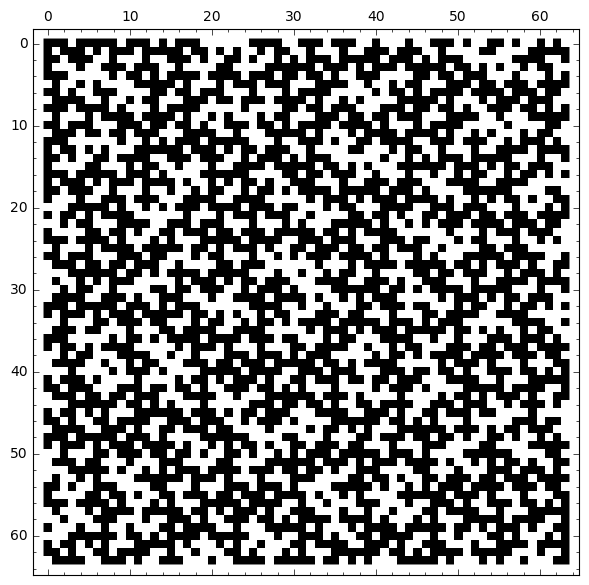}
  \captionof{figure}{$[f_{6,3}]$: weight classes. ~~~~\\~~~~}
  \label{fig:c6_3_weight_class_matrix}
\end{minipage}%
~~~~
\begin{minipage}{.48\textwidth}
  \centering
  \includegraphics[width=.9\linewidth]{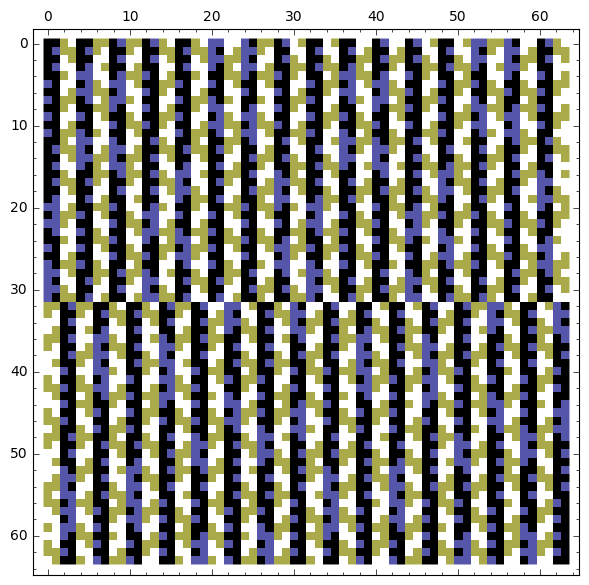}
  \captionof{figure}{$[f_{6,3}]$: extended Cayley classes.}
  \label{fig:c6_3_bent_cayley_graph_index_matrix}
\end{minipage}
\end{figure}

\paragraph*{ET class $[f_{6,4}]$.}
This is the ET class of the bent function
\begin{align*}
f_{6,4}(x) &= x_{0} x_{1} x_{2} + x_{0} x_{3} + x_{1} x_{3} x_{4} + x_{1} x_{5} + x_{2} x_{3} x_{5}
\\
           &+ x_{2} x_{3} + x_{2} x_{4} + x_{2} x_{5} + x_{3} x_{4} + x_{3} x_{5}.
\end{align*}

The ET class contains three extended Cayley classes as per Table~\ref{tab-c6_4_EC_classes}.

\begin{table}[!bhpt] 
\small{}
\begin{align*}
\def\arraystretch{1.2}
\begin{array}{|cccl|}
\hline
\text{Class} &
\text{Parameters} &
\text{2-rank} &
\text{Clique polynomial}
\\
\hline
0 &
(64, 28, 12, 12) &
14 &
\begin{array}{l}
32t^{8} + 256t^{7} + 896t^{6} + 1792t^{5} + 4480t^{4}
\,+
\\
 3584t^{3} + 896t^{2} + 64t + 1
\end{array}
\\
1 &
(64, 28, 12, 12) &
14 &
\begin{array}{l}
16t^{8} + 128t^{7} + 448t^{6} + 1280t^{5} + 4224t^{4}
\,+
\\
 3584t^{3} + 896t^{2} + 64t + 1
\end{array}
\\
2 &
(64, 36, 20, 20) &
14 &
\begin{array}{l}
176t^{8} + 1408t^{7} + 9664t^{6} + 22272t^{5}
\,+
\\
 20608t^{4} + 7680t^{3} + 1152t^{2} + 64t + 1
\end{array}
\\
\hline
\end{array}
\end{align*}
\caption{$[f_{6,4}]$ extended Cayley classes.}
\label{tab-c6_4_EC_classes}
\end{table}

The Cayley graphs for classes 0 to 2 are isomorphic to those those obtained from
Tonchev's projective two-weight codes \cite{Ton07codes} as per Table~\ref{tab-c6_4_codes}.

\begin{table}[!bhpt] 
\small{
\begin{align*}
\def\arraystretch{1.2}
\begin{array}{|ccl|}
\hline
\text{Class} &
\text{Parameters} & \text{Reference}
\\
\hline
0 & [35,6,16] & \text{Table 1.156 7 (complement)}
\\
1 & [35,6,16] & \text{Table 1.156 6 (complement)}
\\
2 & [27,6,12] & \text{Table 1.155 5 }
\\
\hline
\end{array}
\end{align*}
}
\caption{$[f_{6,4}]$ Two-weight projective codes.}
\label{tab-c6_4_codes}
\end{table}


The three extended Cayley classes are distributed between the two weight classes,
as shown in Figures~\ref{fig:c6_4_weight_class_matrix} and~\ref{fig:c6_4_bent_cayley_graph_index_matrix}.

\begin{figure}[!hpt] 
\centering
\begin{minipage}{.48\textwidth}
  \centering
  \includegraphics[width=.9\linewidth]{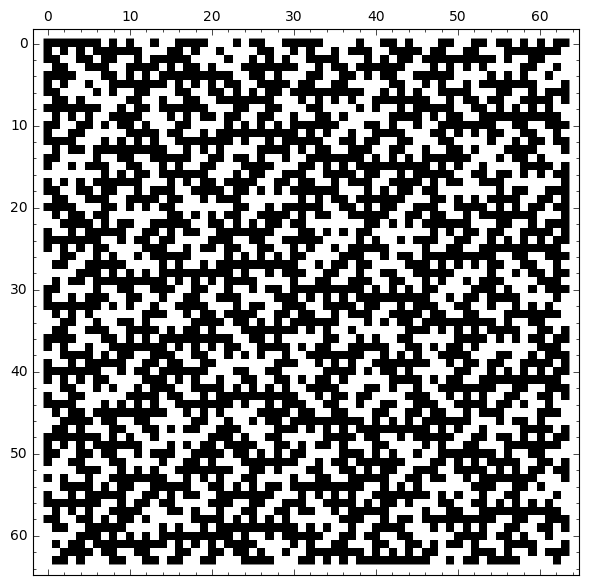}
  \captionof{figure}{$[f_{6,4}]$: weight classes. ~~~~\\~~~~}
  \label{fig:c6_4_weight_class_matrix}
\end{minipage}%
~~~~
\begin{minipage}{.48\textwidth}
  \centering
  \includegraphics[width=.9\linewidth]{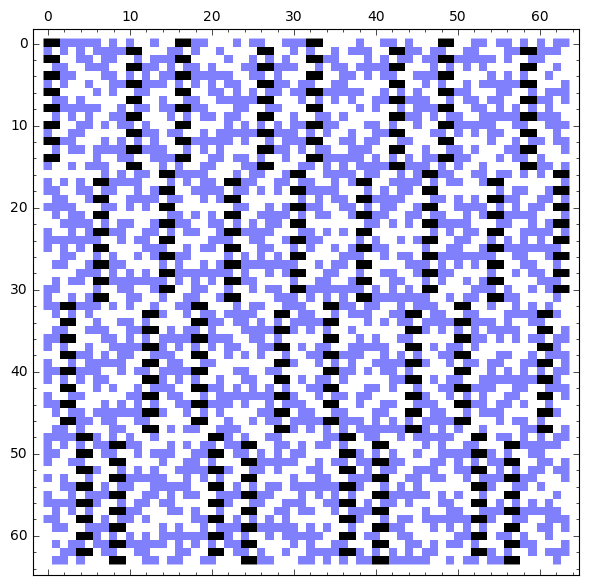}
  \captionof{figure}{$[f_{6,4}]$: extended Cayley classes.}
  \label{fig:c6_4_bent_cayley_graph_index_matrix}
\end{minipage}
\end{figure}
\newpage
\subsection{Bent functions in 8 dimensions}

There are
$99\,270\,589\,265\,934\,370\,305\,785\,861\,242\,880 \approx 2^{106}$ bent functions in 8 dimensions,
according to Langevin and Leander \cite{LanL11counting}.
%
%
The number of EA classes has not yet been published,
let alone a list of representative bent functions.
The lists of EA classes of bent functions that have so far been published include those
for the bent functions of degree at most 3 \cite[Section 5.5.2]{Bra06thesis} \cite[Section 7.3]{Tok15bent},
and the partial spread bent functions \cite{Lan10psf,LanH11counting}.
The bent functions used in the S-boxes of the CAST-128 encryption algorithm \cite{Ada97,RFC2144}
are also representatives of disjoint EA classes.
\paragraph*{Extended affine classes of degree at most 3.}
According to a list contained in Braeken's PhD thesis \cite[Section 5.5.2]{Bra06thesis},
and repeated in Tokareva's table \cite[Section 7.3]{Tok15bent},
the bent functions on $\F_2^8$, of degree at most 3, consist of 10
EA classes, whose representatives are listed in Table~\ref{tab-c8_ET_classes}.
\begin{table}[!bhpt] 
\small{}
\begin{align*}
\def\arraystretch{1.2}
\begin{array}{|cl|}
\hline
\text{Class} &
\text{Representative}
\\
\hline
\,[f_{ 8 , 1 }] & f_{ 8 , 1 } :=
\begin{array}{l}
x_{0} x_{1} + x_{2} x_{3} + x_{4} x_{5} + x_{6} x_{7}
\end{array}
\\
\,[f_{ 8 , 2 }] & f_{ 8 , 2 } :=
\begin{array}{l}
x_{0} x_{1} x_{2} + x_{0} x_{3} + x_{1} x_{4} + x_{2} x_{5} + x_{6} x_{7}
\end{array}
\\
\,[f_{ 8 , 3 }] & f_{ 8 , 3 } :=
\begin{array}{l}
x_{0} x_{1} x_{2} + x_{0} x_{6} + x_{1} x_{3} x_{4} + x_{1} x_{5} + x_{2} x_{3} + x_{4} x_{7}
\end{array}
\\
\,[f_{ 8 , 4 }] & f_{ 8 , 4 } :=
\begin{array}{l}
x_{0} x_{1} x_{2} + x_{0} x_{2} + x_{0} x_{4} + x_{1} x_{3} x_{4} + x_{1} x_{5} + x_{2} x_{3}\, +
x_{6} x_{7}
\end{array}
\\
\,[f_{ 8 , 5 }] & f_{ 8 , 5 } :=
\begin{array}{l}
x_{0} x_{1} x_{2} + x_{0} x_{6} + x_{1} x_{3} x_{4} + x_{1} x_{4} + x_{1} x_{5} + x_{2} x_{3} x_{5}
+ x_{2} x_{4} + x_{3} x_{7}
\end{array}
\\
\,[f_{ 8 , 6 }] & f_{ 8 , 6 } :=
\begin{array}{l}
x_{0} x_{1} x_{2} + x_{0} x_{2} + x_{0} x_{3} + x_{1} x_{3} x_{4} + x_{1} x_{6} + x_{2} x_{3} x_{5}
+ x_{2} x_{4} + x_{5} x_{7}
\end{array}
\\
\,[f_{ 8 , 7 }] & f_{ 8 , 7 } :=
\begin{array}{l}
x_{0} x_{1} x_{2} + x_{0} x_{1} + x_{0} x_{2} + x_{0} x_{3} + x_{1} x_{3} x_{4} + x_{1} x_{4}\, +
x_{1} x_{5}\, +
\\
x_{2} x_{3} x_{5} + x_{2} x_{4} + x_{6} x_{7}
\end{array}
\\
\,[f_{ 8 , 8 }] & f_{ 8 , 8 } :=
\begin{array}{l}
x_{0} x_{1} x_{2} + x_{0} x_{5} + x_{1} x_{3} x_{4} + x_{1} x_{6} + x_{2} x_{3} x_{5} + x_{2} x_{4}
+ x_{3} x_{7}
\end{array}
\\
\,[f_{ 8 , 9 }] & f_{ 8 , 9 } :=
\begin{array}{l}
x_{0} x_{1} x_{6} + x_{0} x_{3} + x_{1} x_{4} + x_{2} x_{3} x_{6} + x_{2} x_{5} + x_{3} x_{4}\, +
x_{4} x_{5} x_{6} + x_{6} x_{7}
\end{array}
\\
\,[f_{ 8 , 10 }] & f_{ 8 , 10 } :=
\begin{array}{l}
x_{0} x_{1} x_{2} + x_{0} x_{3} x_{6} + x_{0} x_{4} + x_{0} x_{5} + x_{1} x_{3} x_{4} + x_{1} x_{6}
+ x_{2} x_{3} x_{5}\, +
\\
x_{2} x_{4} + x_{3} x_{7}
\end{array}
\\
\hline
\end{array}
\end{align*}
\normalsize{}
\caption{8 dimensions to degree 3: ET classes.}
\label{tab-c8_ET_classes}
\end{table}
We here examine the corresponding ET classes in detail.
\newpage
\paragraph*{ET class $[f_{8,1}]$.}
This is the ET class of the bent function
\small{}
\begin{align*}
f_{ 8 , 1 } &=
\begin{array}{l}
x_{0} x_{1} + x_{2} x_{3} + x_{4} x_{5} + x_{6} x_{7}.
\end{array}
\end{align*}
\normalsize{}
This function is quadratic and self-dual.
The ET class contains two extended Cayley classes as per Table~\ref{tab-c8_1_EC_classes}.

\begin{table}[!bhpt] 
\small{}
\begin{align*}
\def\arraystretch{1.2}
\begin{array}{|cccl|}
\hline
\text{Class} &
\text{Parameters} &
\text{2-rank} &
\text{Clique polynomial}
\\
\hline
0 &
(256, 120, 56, 56) &
10 &
\begin{array}{l}
245760t^{9} + 3317760t^{8} + 8847360t^{7}
\,+
\\
 10321920t^{6} + 6193152t^{5} + 2007040t^{4}
\,+
\\
 286720t^{3} + 15360t^{2} + 256t + 1
\end{array}
\\
1 &
(256, 136, 72, 72) &
10 &
\begin{array}{l}
417792t^{8} + 3342336t^{7} + 11698176t^{6}
\,+
\\
 11698176t^{5} + 3760128t^{4} + 417792t^{3}
\,+
\\
 17408t^{2} + 256t + 1
\end{array}
\\
\hline
\end{array}
\end{align*}
\caption{$[f_{8,1}]$ extended Cayley classes.}
\label{tab-c8_1_EC_classes}
\end{table}

As expected from Theorem~\ref{th-Quadratic-Classes},
the two extended Cayley classes correspond to the two weight classes,
as shown in Figures~\ref{fig:c8_1_weight_class_matrix} and~\ref{fig:c8_1_bent_cayley_graph_index_matrix}.

Remark: The fractal-like self-similar quality of Figures~\ref{fig:c2_1_weight_class_matrix}, \ref{fig:c4_1_weight_class_matrix},
and \ref{fig:c6_1_weight_class_matrix} continues with Figure~\ref{fig:c8_1_weight_class_matrix}.

\begin{figure}[!bhpt] 
\centering
\begin{minipage}{.48\textwidth}
  \centering
  \includegraphics[width=.9\linewidth]{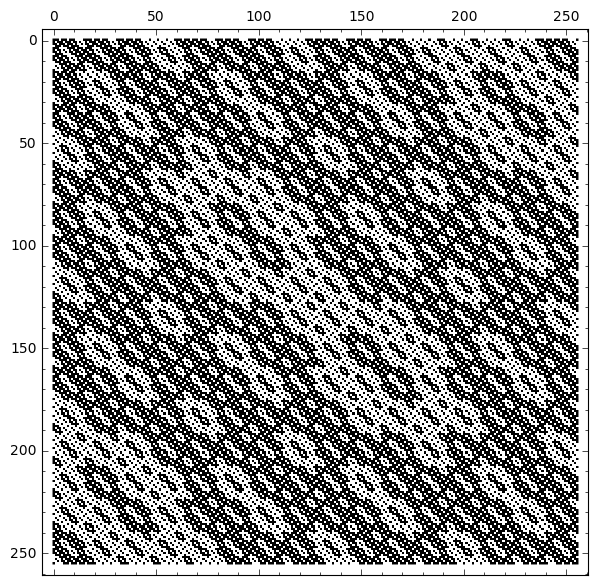}
  \captionof{figure}{$[f_{8,1}]$: weight classes. ~~~~\\~~~~}
  \label{fig:c8_1_weight_class_matrix}
\end{minipage}%
~~~~
\begin{minipage}{.48\textwidth}
  \centering
  \includegraphics[width=.9\linewidth]{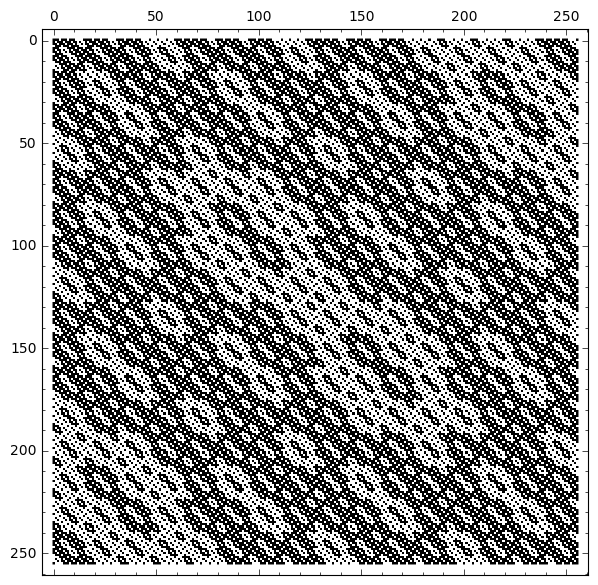}
  \captionof{figure}{$[f_{8,1}]$: extended Cayley classes.}
  \label{fig:c8_1_bent_cayley_graph_index_matrix}
\end{minipage}
\end{figure}

%
\paragraph*{ET class $[f_{8,2}]$.}
This is the ET class of the bent function
\small{}
\begin{align*}
f_{ 8 , 2 } &=
\begin{array}{l}
x_{0} x_{1} x_{2} + x_{0} x_{3} + x_{1} x_{4} + x_{2} x_{5} + x_{6} x_{7}.
\end{array}
\end{align*}
\normalsize{}
The ET class contains four extended Cayley classes as per Table~\ref{tab-c8_2_EC_classes}.

\begin{table}[!bhpt] 
\small{}
\begin{align*}
\def\arraystretch{1.2}
\begin{array}{|cccl|}
\hline
\text{Class} &
\text{Parameters} &
\text{2-rank} &
\text{Clique polynomial}
\\
\hline
0 &
(256, 120, 56, 56) &
10 &
\begin{array}{l}
245760t^{9} + 3317760t^{8} + 8847360t^{7}
\,+
\\
 10321920t^{6} + 6193152t^{5} + 2007040t^{4}
\,+
\\
 286720t^{3} + 15360t^{2} + 256t + 1
\end{array}
\\
1 &
(256, 120, 56, 56) &
10 &
\begin{array}{l}
49152t^{9} + 663552t^{8} + 2555904t^{7}
\,+
\\
 5079040t^{6} + 4620288t^{5} + 1875968t^{4}
\,+
\\
 286720t^{3} + 15360t^{2} + 256t + 1
\end{array}
\\
2 &
(256, 136, 72, 72) &
10 &
\begin{array}{l}
327680t^{9} + 4055040t^{8} + 13828096t^{7}
\,+
\\
 22183936t^{6} + 14319616t^{5} + 3891200t^{4}
\,+
\\
 417792t^{3} + 17408t^{2} + 256t + 1
\end{array}
\\
3 &
(256, 136, 72, 72) &
10 &
\begin{array}{l}
417792t^{8} + 3342336t^{7} + 11698176t^{6}
\,+
\\
 11698176t^{5} + 3760128t^{4} + 417792t^{3}
\,+
\\
 17408t^{2} + 256t + 1
\end{array}
\\
\hline
\end{array}
\end{align*}
\caption{$[f_{8,2}]$ extended Cayley classes.}
\label{tab-c8_2_EC_classes}
\end{table}

\begin{figure}[!ht] 
\centering
\begin{minipage}{.48\textwidth}
  \centering
  \includegraphics[width=.9\linewidth]{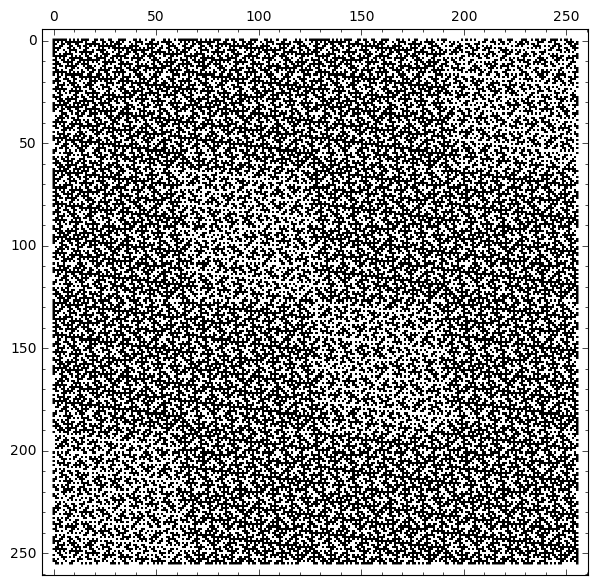}
  \captionof{figure}{$[f_{8,2}]$: weight classes. ~~~~\\~~~~}
  \label{fig:c8_2_weight_class_matrix}
\end{minipage}%
~~~~
\begin{minipage}{.48\textwidth}
  \centering
  \includegraphics[width=.9\linewidth]{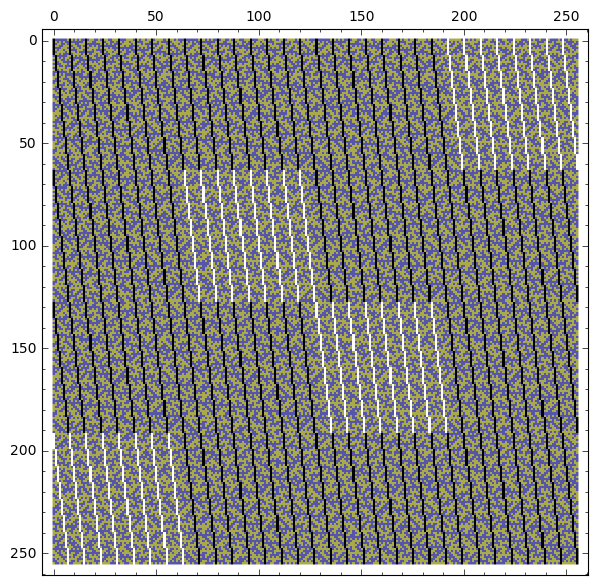}
  \captionof{figure}{$[f_{8,2}]$: extended Cayley classes.}
  \label{fig:c8_2_bent_cayley_graph_index_matrix}
\end{minipage}
\end{figure}
~
\newpage
The Cayley graph for class 0 is isomorphic to graph 0 of ET class $[f_{8,1}]$,
This reflects the fact that $f_{8,1} \equiv f_{8,2}$, even though these two functions are not
EA equivalent.
This is therefore an example of an isomorphism between Cayley graphs of bent functions on
$\F_2^8$ that is not a linear function.

The four extended Cayley classes are distributed between the two weight classes,
as shown in Figures~\ref{fig:c8_2_weight_class_matrix} and~\ref{fig:c8_2_bent_cayley_graph_index_matrix}.

%
\paragraph*{ET class $[f_{8,3}]$.}
This is the ET class of the bent function
\small{}
\begin{align*}
f_{ 8 , 3 } &=
\begin{array}{l}
x_{0} x_{1} x_{2} + x_{0} x_{6} + x_{1} x_{3} x_{4} + x_{1} x_{5} + x_{2} x_{3} + x_{4} x_{7}.
\end{array}
\end{align*}
\normalsize{}
The ET class contains six extended Cayley classes as per Table~\ref{tab-c8_3_EC_classes}.

\begin{table}[!bhpt] 
%
\small{}
\begin{align*}
\def\arraystretch{1.2}
\begin{array}{|cccl|}
\hline
\text{Class} &
\text{Parameters} &
\text{2-rank} &
\text{Clique polynomial}
\\
\hline
0 &
(256, 120, 56, 56) &
12 &
\begin{array}{l}
81920t^{9} + 1368064t^{8} + 4653056t^{7}
\,+
\\
 7176192t^{6} + 5406720t^{5} + 1941504t^{4}
\,+
\\
 286720t^{3} + 15360t^{2} + 256t + 1
\end{array}
\\
1 &
(256, 136, 72, 72) &
12 &
\begin{array}{l}
294912t^{9} + 6299648t^{8} + 21692416t^{7}
\,+
\\
 27951104t^{6} + 15630336t^{5} + 3956736t^{4}
\,+
\\
 417792t^{3} + 17408t^{2} + 256t + 1
\end{array}
\\
2 &
(256, 120, 56, 56) &
12 &
\begin{array}{l}
16384t^{9} + 221184t^{8} + 1277952t^{7}
\,+
\\
 3768320t^{6} + 4227072t^{5} + 1843200t^{4}
\,+
\\
 286720t^{3} + 15360t^{2} + 256t + 1
\end{array}
\\
3 &
(256, 136, 72, 72) &
12 &
\begin{array}{l}
262144t^{9} + 4399104t^{8} + 16220160t^{7}
\,+
\\
 24281088t^{6} + 14974976t^{5} + 3923968t^{4}
\,+
\\
 417792t^{3} + 17408t^{2} + 256t + 1
\end{array}
\\
4 &
(256, 120, 56, 56) &
12 &
\begin{array}{l}
49152t^{9} + 729088t^{8} + 2686976t^{7}
\,+
\\
 5079040t^{6} + 4620288t^{5} + 1875968t^{4}
\,+
\\
 286720t^{3} + 15360t^{2} + 256t + 1
\end{array}
\\
5 &
(256, 136, 72, 72) &
12 &
\begin{array}{l}
196608t^{9} + 3399680t^{8} + 13172736t^{7}
\,+
\\
 21659648t^{6} + 14319616t^{5} + 3891200t^{4}
\,+
\\
 417792t^{3} + 17408t^{2} + 256t + 1
\end{array}
\\
\hline
\end{array}
\end{align*}
\caption{$[f_{8,3}]$ extended Cayley classes.}
\label{tab-c8_3_EC_classes}
\end{table}

The six extended Cayley classes are distributed between the two weight classes,
as shown in Figures~\ref{fig:c8_3_weight_class_matrix} and~\ref{fig:c8_3_bent_cayley_graph_index_matrix}.

\begin{figure}[!bhpt] 
\centering
\begin{minipage}{.48\textwidth}
  \centering
  \includegraphics[width=.9\linewidth]{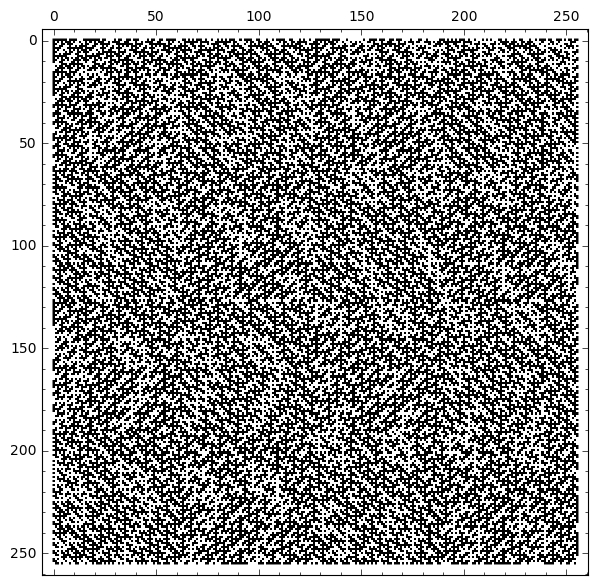}
  \captionof{figure}{$[f_{8,3}]$: weight classes. ~~~~\\~~~~}
  \label{fig:c8_3_weight_class_matrix}
\end{minipage}%
~~~~
\begin{minipage}{.48\textwidth}
  \centering
  \includegraphics[width=.9\linewidth]{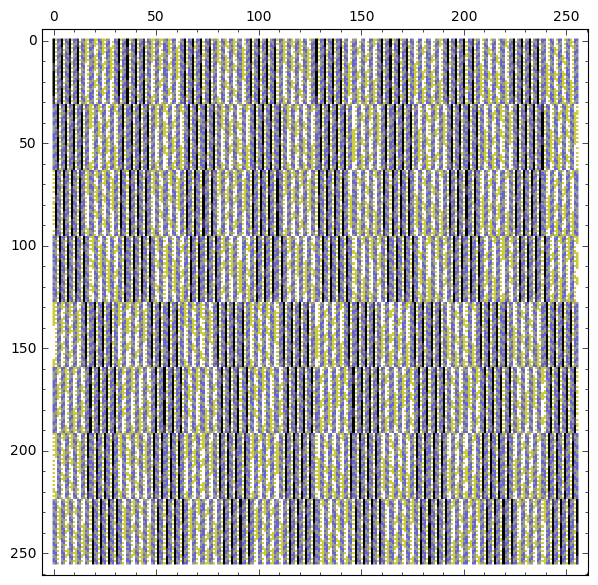}
  \captionof{figure}{$[f_{8,3}]$: extended Cayley classes.}
  \label{fig:c8_3_bent_cayley_graph_index_matrix}
\end{minipage}
\end{figure}
~
\newpage
%
\paragraph*{ET class $[f_{8,4}]$.}
This is the ET class of the bent function
\small{}
\begin{align*}
f_{ 8 , 4 } &=
\begin{array}{l}
x_{0} x_{1} x_{2} + x_{0} x_{2} + x_{0} x_{4} + x_{1} x_{3} x_{4} + x_{1} x_{5} + x_{2} x_{3}\, +
x_{6} x_{7}.
\end{array}
\end{align*}
\normalsize{}
The ET class contains six extended Cayley classes as per Table~\ref{tab-c8_4_EC_classes}.

The six extended Cayley classes are distributed between the two weight classes,
as shown in Figures~\ref{fig:c8_4_weight_class_matrix} and~\ref{fig:c8_4_bent_cayley_graph_index_matrix}.

\begin{table}[!bhpt] 
\small{}
\begin{align*}
\def\arraystretch{1.2}
\begin{array}{|cccl|}
\hline
\text{Class} &
\text{Parameters} &
\text{2-rank} &
\text{Clique polynomial}
\\
\hline
0 &
(256, 120, 56, 56) &
14 &
\begin{array}{l}
69632t^{9} + 1099776t^{8} + 3784704t^{7}
\,+
\\
 6160384t^{6} + 5013504t^{5} + 1908736t^{4}
\,+
\\
 286720t^{3} + 15360t^{2} + 256t + 1
\end{array}
\\
1 &
(256, 136, 72, 72) &
14 &
\begin{array}{l}
225280t^{9} + 4319232t^{8} + 16203776t^{7}
\,+
\\
 24313856t^{6} + 14974976t^{5} + 3923968t^{4}
\,+
\\
 417792t^{3} + 17408t^{2} + 256t + 1
\end{array}
\\
2 &
(256, 120, 56, 56) &
14 &
\begin{array}{l}
1536t^{10} + 15360t^{9} + 209920t^{8}
\,+
\\
 1280000t^{7} + 3751936t^{6} + 4227072t^{5}
\,+
\\
 1843200t^{4} + 286720t^{3} + 15360t^{2} + 256t + 1
\end{array}
\\
3 &
(256, 136, 72, 72) &
14 &
\begin{array}{l}
7680t^{10} + 230400t^{9} + 4228096t^{8}
\,+
\\
 16058368t^{7} + 24166400t^{6} + 14974976t^{5}
\,+
\\
 3923968t^{4} + 417792t^{3} + 17408t^{2} + 256t + 1
\end{array}
\\
4 &
(256, 136, 72, 72) &
14 &
\begin{array}{l}
110592t^{9} + 2344960t^{8} + 10305536t^{7}
\,+
\\
 18939904t^{6} + 13664256t^{5} + 3858432t^{4}
\,+
\\
 417792t^{3} + 17408t^{2} + 256t + 1
\end{array}
\\
5 &
(256, 120, 56, 56) &
14 &
\begin{array}{l}
20480t^{9} + 337920t^{8} + 1556480t^{7}
\,+
\\
 3932160t^{6} + 4227072t^{5} + 1843200t^{4}
\,+
\\
 286720t^{3} + 15360t^{2} + 256t + 1
\end{array}
\\
\hline
\end{array}
\end{align*}
\caption{$[f_{8,4]}$ extended Cayley classes.}
\label{tab-c8_4_EC_classes}
\end{table}

\begin{figure}[!bhpt] 
\centering
\begin{minipage}{.48\textwidth}
  \centering
  \includegraphics[width=.9\linewidth]{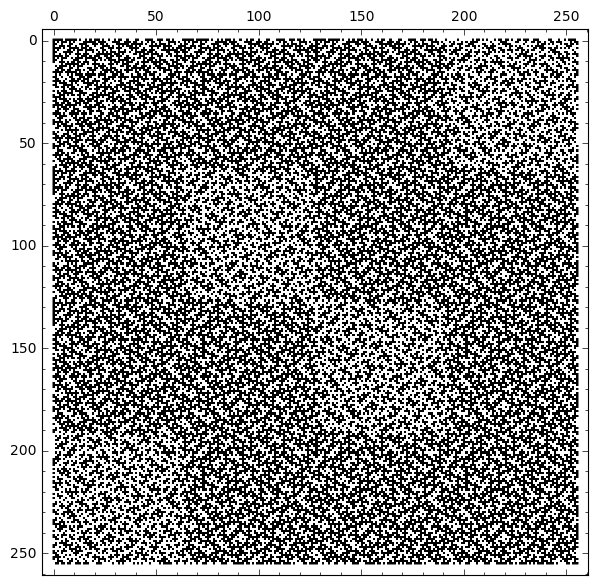}
  \captionof{figure}{$[f_{8,4}]$: weight classes. ~~~~\\~~~~}
  \label{fig:c8_4_weight_class_matrix}
\end{minipage}%
~~~~
\begin{minipage}{.48\textwidth}
  \centering
  \includegraphics[width=.9\linewidth]{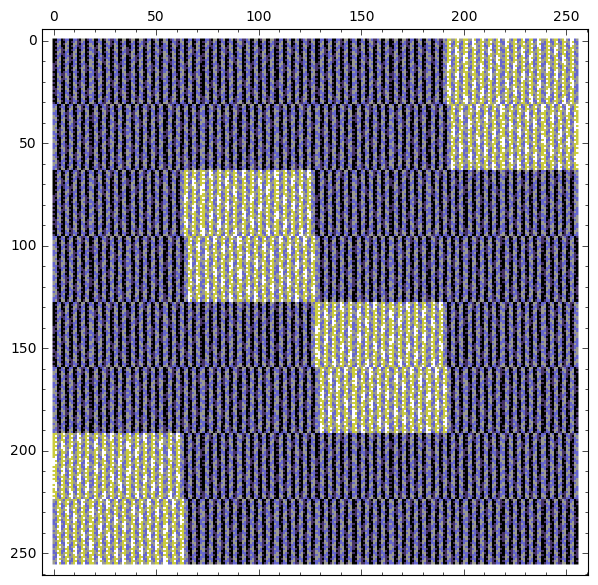}
  \captionof{figure}{$[f_{8,4}]$: extended Cayley classes.}
  \label{fig:c8_4_bent_cayley_graph_index_matrix}
\end{minipage}
\end{figure}
~
\paragraph*{ET class $[f_{8,5}]$.}
This is the ET class of the bent function
\small{}
\begin{align*}
f_{ 8 , 5 } &=
\begin{array}{l}
x_{0} x_{1} x_{2} + x_{0} x_{6} + x_{1} x_{3} x_{4} + x_{1} x_{4} + x_{1} x_{5} + x_{2} x_{3} x_{5}
+ x_{2} x_{4} + x_{3} x_{7}.
\end{array}
\end{align*}
\normalsize{}
The ET class contains 9 extended Cayley classes as per
Table~\ref{tab-c8_5_EC_classes}.

\begin{table}[!bhpt] 
\small{}
\begin{align*}
\def\arraystretch{1.2}
\begin{array}{|cccl|}
\hline
\text{Class} &
\text{Parameters} &
\text{2-rank} &
\text{Clique polynomial}
\\
\hline
0 &
(256, 120, 56, 56) &
14 &
\begin{array}{l}
32768t^{9} + 731136t^{8} + 3096576t^{7}
\,+
\\
 5767168t^{6} + 5013504t^{5} + 1908736t^{4}
\,+
\\
 286720t^{3} + 15360t^{2} + 256t + 1
\end{array}
\\
1 &
(256, 120, 56, 56) &
14 &
\begin{array}{l}
28672t^{9} + 534528t^{8} + 2211840t^{7}
\,+
\\
 4718592t^{6} + 4620288t^{5} + 1875968t^{4}
\,+
\\
 286720t^{3} + 15360t^{2} + 256t + 1
\end{array}
\\
2 &
(256, 136, 72, 72) &
14 &
\begin{array}{l}
159744t^{9} + 4753408t^{8} + 19021824t^{7}
\,+
\\
 26804224t^{6} + 15630336t^{5} + 3956736t^{4}
\,+
\\
 417792t^{3} + 17408t^{2} + 256t + 1
\end{array}
\\
3 &
(256, 120, 56, 56) &
14 &
\begin{array}{l}
24576t^{9} + 526336t^{8} + 2342912t^{7}
\,+
\\
 4849664t^{6} + 4620288t^{5} + 1875968t^{4}
\,+
\\
 286720t^{3} + 15360t^{2} + 256t + 1
\end{array}
\\
4 &
(256, 136, 72, 72) &
14 &
\begin{array}{l}
90112t^{9} + 2795520t^{8} + 12402688t^{7}
\,+
\\
 21168128t^{6} + 14319616t^{5} + 3891200t^{4}
\,+
\\
 417792t^{3} + 17408t^{2} + 256t + 1
\end{array}
\\
5 &
(256, 120, 56, 56) &
14 &
\begin{array}{l}
16384t^{9} + 284672t^{8} + 1392640t^{7}
\,+
\\
 3735552t^{6} + 4227072t^{5} + 1843200t^{4}
\,+
\\
 286720t^{3} + 15360t^{2} + 256t + 1
\end{array}
\\
6 &
(256, 136, 72, 72) &
14 &
\begin{array}{l}
131072t^{9} + 3577856t^{8} + 15319040t^{7}
\,+
\\
 23855104t^{6} + 14974976t^{5} + 3923968t^{4}
\,+
\\
 417792t^{3} + 17408t^{2} + 256t + 1
\end{array}
\\
7 &
(256, 120, 56, 56) &
14 &
\begin{array}{l}
1536t^{10} + 19456t^{9} + 279552t^{8}
\,+
\\
 1394688t^{7} + 3751936t^{6} + 4227072t^{5}
\,+
\\
 1843200t^{4} + 286720t^{3} + 15360t^{2} + 256t + 1
\end{array}
\\
8 &
(256, 136, 72, 72) &
14 &
\begin{array}{l}
5632t^{10} + 148480t^{9} + 3621888t^{8}
\,+
\\
 15206400t^{7} + 23773184t^{6} + 14974976t^{5}
\,+
\\
 3923968t^{4} + 417792t^{3} + 17408t^{2} + 256t + 1
\end{array}
\\
\hline
\end{array}
\end{align*}
\caption{$[f_{8,5}]$ extended Cayley classes.}
\label{tab-c8_5_EC_classes}
\end{table}
\newpage
The 9 extended Cayley classes are distributed between the two weight classes,
as shown in Figures~\ref{fig:c8_5_weight_class_matrix} and~\ref{fig:c8_5_bent_cayley_graph_index_matrix}.

\begin{figure}[!bhpt] 
\centering
\begin{minipage}{.48\textwidth}
  \centering
  \includegraphics[width=.9\linewidth]{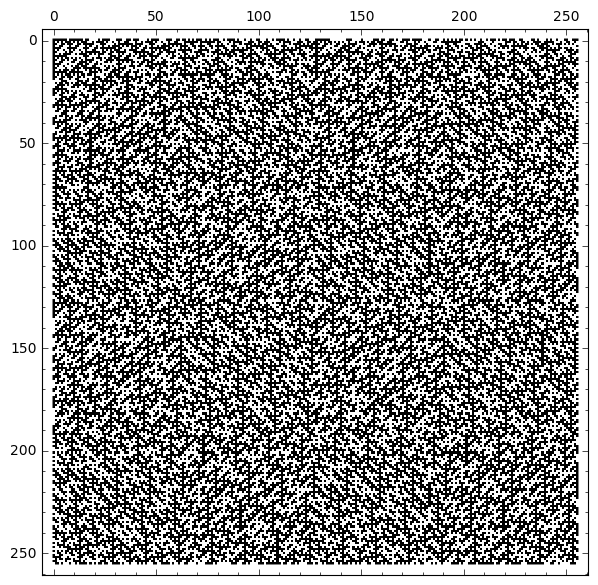}
  \captionof{figure}{$[f_{8,5}]$: weight classes. ~~~~\\~~~~}
  \label{fig:c8_5_weight_class_matrix}
\end{minipage}%
~~~~
\begin{minipage}{.48\textwidth}
  \centering
  \includegraphics[width=.9\linewidth]{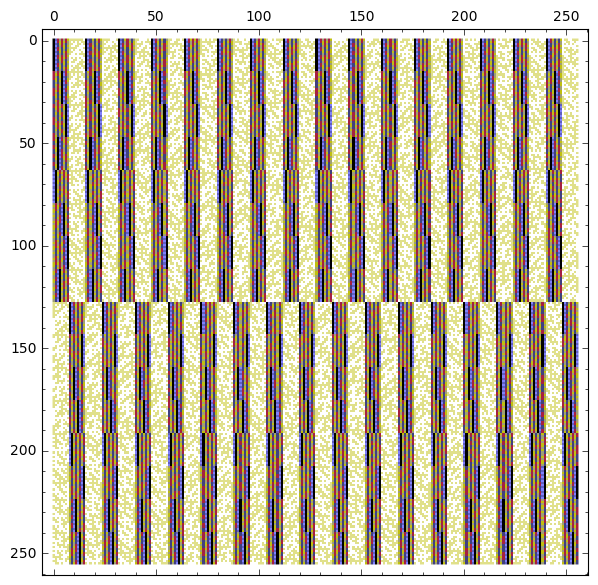}
  \captionof{figure}{$[f_{8,5}]$: extended Cayley classes.}
  \label{fig:c8_5_bent_cayley_graph_index_matrix}
\end{minipage}
\end{figure}
~
\paragraph*{ET class $[f_{8,6}]$.}
This is the ET class of the bent function
\small{}
\begin{align*}
f_{ 8 , 6 } &=
\begin{array}{l}
x_{0} x_{1} x_{2} + x_{0} x_{2} + x_{0} x_{3} + x_{1} x_{3} x_{4} + x_{1} x_{6} + x_{2} x_{3} x_{5}
+ x_{2} x_{4} + x_{5} x_{7}.
\end{array}
\end{align*}
\normalsize{}
The ET class contains 9 extended Cayley classes.

\begin{figure}[!hb] 
\centering
\begin{minipage}{.48\textwidth}
  \centering
  \includegraphics[width=.9\linewidth]{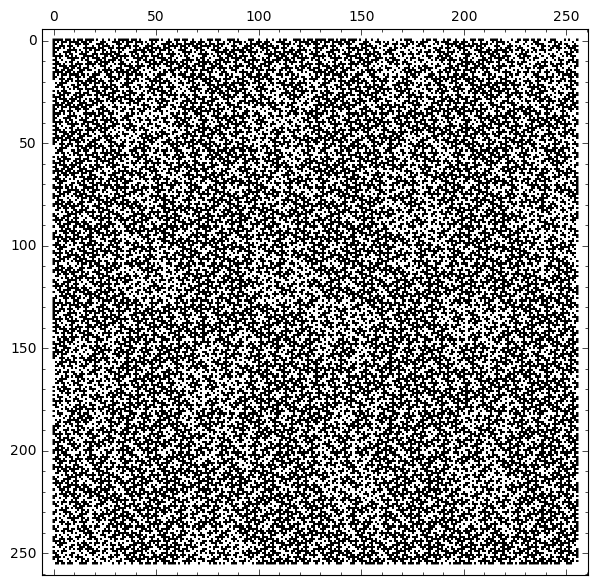}
  \captionof{figure}{$[f_{8,6}]$: weight classes. ~~~~\\~~~~}
  \label{fig:c8_6_weight_class_matrix}
\end{minipage}%
~~~~
\begin{minipage}{.48\textwidth}
  \centering
  \includegraphics[width=.9\linewidth]{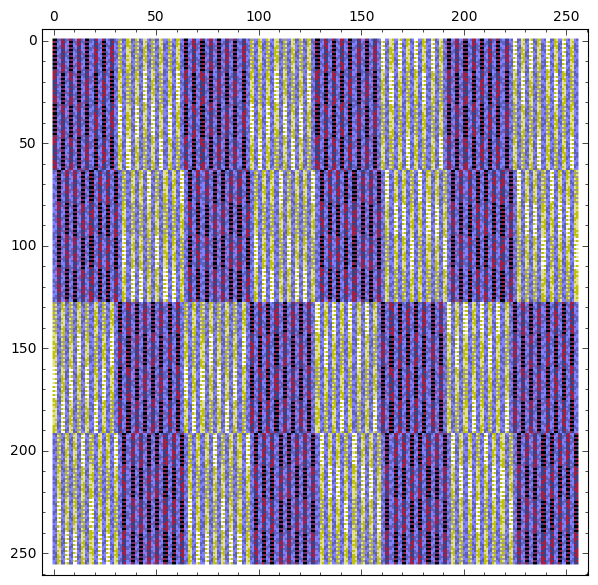}
  \captionof{figure}{$[f_{8,6}]$: extended Cayley classes.}
  \label{fig:c8_6_bent_cayley_graph_index_matrix}
\end{minipage}
\end{figure}

The 9 extended Cayley classes are distributed between the two weight classes,
as shown in Figures~\ref{fig:c8_6_weight_class_matrix} and~\ref{fig:c8_6_bent_cayley_graph_index_matrix}.

The 9 Cayley graphs corresponding to the 9 classes are isomorphic to those for the extended Cayley classes for $[f_{8,5}]$.
The corresponding Cayley classes have the same frequency within each of these two ET classes.
This correspondence is shown in Table~\ref{tab-c8_5-c8_6_EC_classes}.

Figures~\ref{fig:re8_5_bent_cayley_graph_index_matrix} and~\ref{fig:re8_5_bent_cayley_graph_index_matrix}
show the 9 extended Cayley classes of each of $[f_{8,5}]$ and $[f_{8,6}]$ with corresponding Cayley classes
given the same colour.

\begin{table}[!bhpt] 
\small{}
\begin{align*}
\def\arraystretch{1.2}
\begin{array}{|ccr|}
\hline
[f_{8,5}] &
[f_{8,6}] &
\text{Frequency}
\\
\hline
  0 &    0 &  4096
\\
  1 &    1 &  6144
\\
  2 &    2 &  6144
\\
  3 &    5 &  2048
\\
  4 &    8 &  2048
\\
  5 &    6 &  6144
\\
  6 &    7 &  6144
\\
  7 &    3 & 16384
\\
  8 &    4 & 16384
\\
\hline
\end{array}
\end{align*}
\caption{Correspondence between $[f_{8,5}]$ and $[f_{8,6}]$ extended Cayley classes.}
\label{tab-c8_5-c8_6_EC_classes}
\end{table}

\begin{figure}[!bhpt] 
\centering
\begin{minipage}{.48\textwidth}
  \centering
  \includegraphics[width=.9\linewidth]{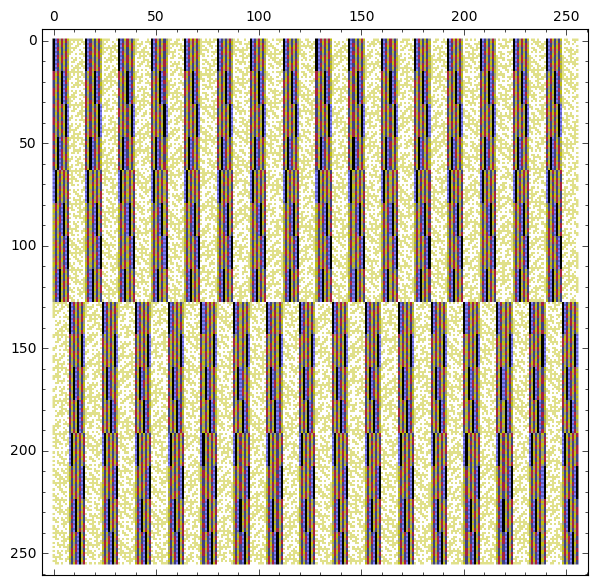}
  \captionof{figure}{$[f_{8,5}]$: extended Cayley classes (recoloured).}
  \label{fig:re8_5_bent_cayley_graph_index_matrix}
\end{minipage}
~~
\begin{minipage}{.48\textwidth}
  \centering
  \includegraphics[width=.9\linewidth]{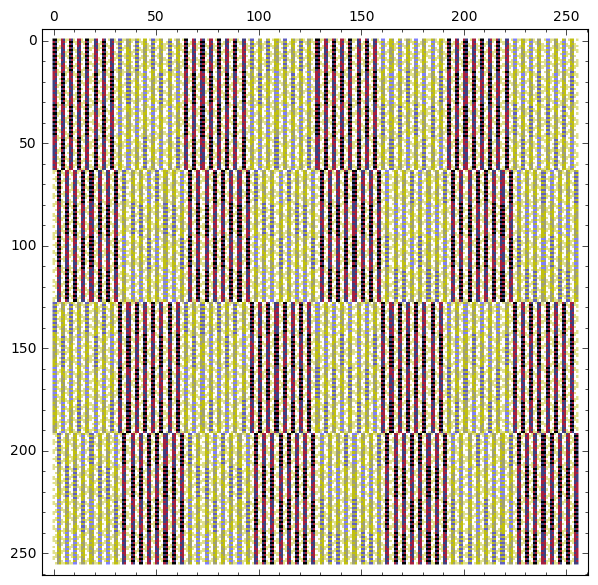}
  \captionof{figure}{$[f_{8,6}]$: extended Cayley classes (recoloured).}
  \label{fig:re8_6_bent_cayley_graph_index_matrix}
\end{minipage}
\end{figure}
\newpage
The explanation for the correspondence between the Cayley classes of $f_{8,5}$ and $f_{8,6}$
is quite simple. The functions $f_{8,5}$ and $f_{8,6}$ are EA equivalent,
in fact general linear equivalent, and therefore Braeken's list of EA equivalence classes
\cite[Section 5.5.2]{Bra06thesis} contains an error.

\begin{Theorem}
\label{th-f8-5-f8-6-linearly-equiv}
Functions $f_{8,5}$ and $f_{8,6}$ are general linear equivalent.
\end{Theorem}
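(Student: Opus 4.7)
The plan is to exhibit an explicit matrix $A \in GL(8,2)$ and verify that $f_{8,5}(Ax) = f_{8,6}(x)$ identically as Boolean polynomials (i.e., modulo $x_i^2 - x_i$ for each $i$). By Theorem~\ref{th-Linear-Cayley} this immediately implies general linear equivalence of $f_{8,5}$ and $f_{8,6}$, and it is consistent with the observation (Table~\ref{tab-c8_5-c8_6_EC_classes}) that the $9$ extended Cayley classes of the two ET classes are matched in pairs with identical frequencies.

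The structural observation that makes the search tractable is that the degree-$3$ components of $f_{8,5}$ and $f_{8,6}$ are identical, both equal to $q_3(x) := x_0 x_1 x_2 + x_1 x_3 x_4 + x_2 x_3 x_5$. Since a linear substitution $x \mapsto Ax$ preserves the total degree of each monomial, the cubic part of $f_{8,5}(Ax)$ equals $q_3(Ax)$, and matching the cubic part of $f_{8,6}(x)$ forces $A$ to lie in the stabilizer $H := \{A \in GL(8,2) \mid q_3(Ax) = q_3(x)\}$. This reduces the search dramatically.

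Within $H$, the remaining requirement is $r^{(5)}(Ax) = r^{(6)}(x)$, where $r^{(i)}$ denotes the degree-at-most-$2$ part of $f_{8,i}$ (so $r^{(5)}(x) = x_0 x_6 + x_1 x_4 + x_1 x_5 + x_2 x_4 + x_3 x_7$ and $r^{(6)}(x) = x_0 x_2 + x_0 x_3 + x_1 x_6 + x_2 x_4 + x_5 x_7$). A systematic search within $H$, performed in SageMath or as a backtracking tree search that assigns the columns of $A$ one at a time and prunes on the twin constraints (i)~stabilization of $q_3$ and (ii)~consistency with the required image of $r^{(5)}$, yields an explicit matrix $A$; the final step, substituting $Ax$ into $f_{8,5}$ and reducing, is a routine polynomial identity check.

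The main obstacle is organizing the search efficiently. The group $H$ is considerably smaller than $GL(8,2)$ but still nontrivial: it contains the obvious $S_3$ of variable permutations that permutes the three cubic monomials $\{x_0 x_1 x_2,\ x_1 x_3 x_4,\ x_2 x_3 x_5\}$ (simultaneously permuting the ``shared'' variables $\{1,2,3\}$ and the ``private'' variables $\{0,4,5\}$), together with the free swap of the unused variables $\{6,7\}$, and in addition it contains nontrivial $\F_2$-linear combinations that are awkward to enumerate by hand. Delegating the construction of $A$ and the final verification to a computer algebra system is therefore the most practical route, and this also establishes the correction to Braeken's list of EA classes.
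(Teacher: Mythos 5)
Your strategy is essentially the paper's: both arguments prove the theorem by exhibiting an explicit $A \in GL(8,2)$ with $f_{8,5}(Ax) = f_{8,6}(x)$, and your observation that the cubic parts of the two functions coincide (both equal $x_0x_1x_2 + x_1x_3x_4 + x_2x_3x_5$) correctly locates where to look. The difficulty is that, as written, your proof is incomplete: the entire content of the theorem is the existence of such an $A$, and you never produce one, deferring instead to an unexecuted computer search. The witness you are missing lies inside the ``obvious'' subgroup you yourself describe, so the elaborate stabilizer search is unnecessary. The paper takes $A$ to be the permutation matrix of $\pi = (x_0\ x_5\ x_4)(x_1\ x_2\ x_3)(x_6\ x_7)$, which cyclically permutes the three cubic monomials (sending the private variables $x_0 \to x_5 \to x_4 \to x_0$ and the shared variables $x_1 \to x_2 \to x_3 \to x_1$) and swaps the unused variables $x_6, x_7$; direct substitution gives $\pi(f_{8,5}) = f_{8,6}$ on the nose. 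Checking the $6 \times 2 = 12$ monomial symmetries of the cubic part by hand would have sufficed, with no computer algebra required.

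A further small correction: matching the cubic parts of $f_{8,5}(Ax)$ and $f_{8,6}(x)$ does not force $q_3(Ax) = q_3(x)$ exactly, only that the degree-$3$ homogeneous component of $q_3(Ax)$ equal $q_3(x)$. Over $\F_2$ the reduction $x_i^2 = x_i$ can create lower-degree residue terms (for instance $x_0 \mapsto x_0 + x_1$ sends $x_0x_1x_2$ to $x_0x_1x_2 + x_1x_2$), and such residues could in principle be cancelled against $r^{(5)}(Ax)$. Restricting the search to your exact stabilizer $H$ is therefore a sufficient but not a necessary condition, so the word ``forces'' is wrong; this happens to be harmless here because a permutation witness exists inside $H$, but the logic should be stated as a restriction of the search space rather than a consequence of the hypothesis.
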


\begin{proof}

Apply the permutation $\pi := (x_0\ x_5\ x_4)(x_1\ x_2\ x_3)(x_6\ x_7)$ to
\small{
\begin{align*}
f_{8,5}
&=
x_{0} x_{1} x_{2} + x_{0} x_{6} + x_{1} x_{3} x_{4} + x_{1} x_{4} + x_{1} x_{5} + x_{2} x_{3} x_{5} + x_{2} x_{4} + x_{3} x_{7}
\intertext{\normalsize{to obtain}}
\pi(f_{8,5})
&=
x_{5} x_{2} x_{3} + x_{5} x_{7} + x_{2} x_{1} x_{0} + x_{2} x_{0} + x_{2} x_{4} + x_{3} x_{1} x_{4} + x_{3} x_{0} + x_{1} x_{6}
\\
&=
x_{0} x_{1} x_{2} + x_{0} x_{2} + x_{0} x_{3} + x_{1} x_{3} x_{4} + x_{1} x_{6} + x_{2} x_{3} x_{5} + x_{2} x_{4} + x_{5} x_{7}
\\
&= f_{8,6}.
\end{align*}
}\normalsize{}
\end{proof}

\paragraph*{ET class $[f_{8,7}]$.}
This is the ET class of the bent function
\small{}
\begin{align*}
f_{ 8 , 7 } &=
\begin{array}{l}
x_{0} x_{1} x_{2} + x_{0} x_{1} + x_{0} x_{2} + x_{0} x_{3} + x_{1} x_{3} x_{4} + x_{1} x_{4}\, +
x_{1} x_{5}\, +
\\
x_{2} x_{3} x_{5} + x_{2} x_{4} + x_{6} x_{7}.
\end{array}
\end{align*}
\normalsize{}
The ET class contains six extended Cayley classes as per Table~\ref{tab-c8_7_EC_classes}.

The six extended Cayley classes are distributed between the two weight classes,
as shown in Figures~\ref{fig:c8_7_weight_class_matrix} and~\ref{fig:c8_7_bent_cayley_graph_index_matrix}.

\begin{table}[!bhpt] 
\small{}
\begin{align*}
\def\arraystretch{1.2}
\begin{array}{|cccl|}
\hline
\text{Class} &
\text{Parameters} &
\text{2-rank} &
\text{Clique polynomial}
\\
\hline
0 &
(256, 120, 56, 56) &
16 &
\begin{array}{l}
29696t^{9} + 655360t^{8} + 2789376t^{7}
\,+
\\
 5332992t^{6} + 4816896t^{5} + 1892352t^{4}
\,+
\\
 286720t^{3} + 15360t^{2} + 256t + 1
\end{array}
\\
1 &
(256, 120, 56, 56) &
16 &
\begin{array}{l}
20480t^{9} + 409600t^{8} + 1837056t^{7}
\,+
\\
 4235264t^{6} + 4423680t^{5} + 1859584t^{4}
\,+
\\
 286720t^{3} + 15360t^{2} + 256t + 1
\end{array}
\\
2 &
(256, 136, 72, 72) &
16 &
\begin{array}{l}
143360t^{9} + 3981312t^{8} + 16697344t^{7}
\,+
\\
 25108480t^{6} + 15302656t^{5} + 3940352t^{4}
\,+
\\
 417792t^{3} + 17408t^{2} + 256t + 1
\end{array}
\\
3 &
(256, 136, 72, 72) &
16 &
\begin{array}{l}
64512t^{9} + 2316288t^{8} + 10932224t^{7}
\,+
\\
 19783680t^{6} + 13991936t^{5} + 3874816t^{4}
\,+
\\
 417792t^{3} + 17408t^{2} + 256t + 1
\end{array}
\\
4 &
(256, 136, 72, 72) &
16 &
\begin{array}{l}
92160t^{9} + 2979840t^{8} + 13608960t^{7}
\,+
\\
 22388736t^{6} + 14647296t^{5} + 3907584t^{4}
\,+
\\
 417792t^{3} + 17408t^{2} + 256t + 1
\end{array}
\\
5 &
(256, 120, 56, 56) &
16 &
\begin{array}{l}
6144t^{9} + 124928t^{8} + 944128t^{7}
\,+
\\
 3219456t^{6} + 4030464t^{5} + 1826816t^{4}
\,+
\\
 286720t^{3} + 15360t^{2} + 256t + 1
\end{array}
\\
\hline
\end{array}
\end{align*}

\caption{$[f_{8,7}]$ extended Cayley classes.}
\label{tab-c8_7_EC_classes}
\end{table}

\begin{figure}[!bhpt] 
\centering
\begin{minipage}{.48\textwidth}
  \centering
  \includegraphics[width=.9\linewidth]{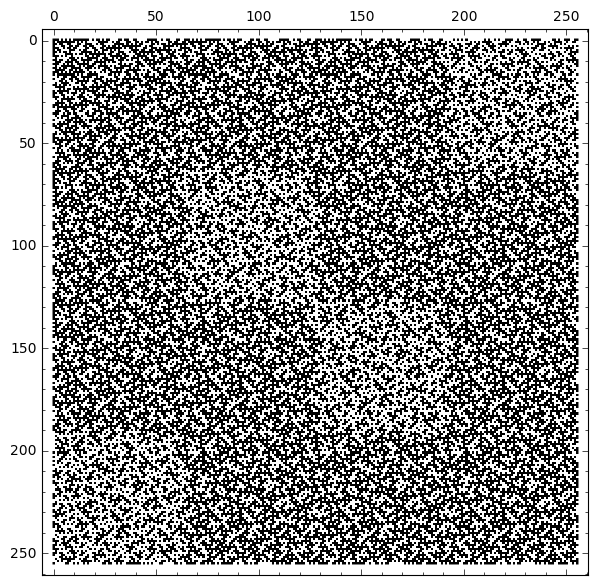}
  \captionof{figure}{$[f_{8,7}]$: weight classes. ~~~~\\~~~~}
  \label{fig:c8_7_weight_class_matrix}
\end{minipage}%
~~~~
\begin{minipage}{.48\textwidth}
  \centering
  \includegraphics[width=.9\linewidth]{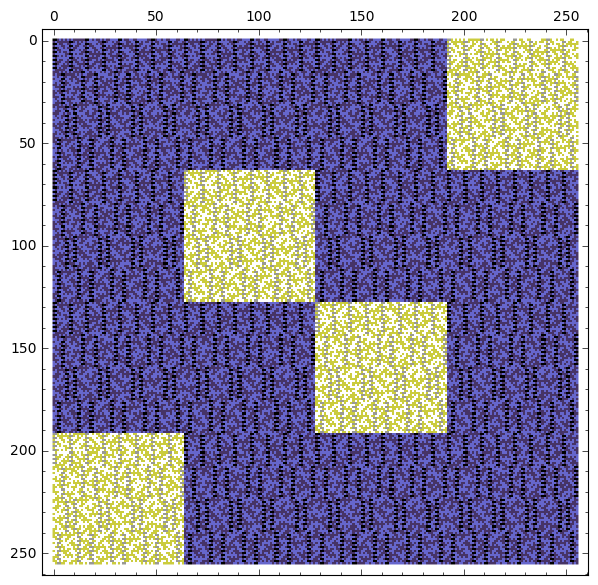}
  \captionof{figure}{$[f_{8,7}]$: extended Cayley classes.}
  \label{fig:c8_7_bent_cayley_graph_index_matrix}
\end{minipage}
\end{figure}

\newpage
\paragraph*{ET class $[f_{8,8}]$.}
This is the ET class of the bent function
\small{}
\begin{align*}
f_{ 8 , 8 } &=
\begin{array}{l}
x_{0} x_{1} x_{2} + x_{0} x_{5} + x_{1} x_{3} x_{4} + x_{1} x_{6} + x_{2} x_{3} x_{5} + x_{2} x_{4}
+ x_{3} x_{7}.
\end{array}
\end{align*}
\normalsize{}
The ET class contains six extended Cayley classes as per Table~\ref{tab-c8_8_EC_classes}.

The six extended Cayley classes are distributed between the two weight classes,
as shown in Figures~\ref{fig:c8_8_weight_class_matrix} and~\ref{fig:c8_8_bent_cayley_graph_index_matrix}.

\begin{table}[!bhpt] 
\small{}
\begin{align*}
\def\arraystretch{1.2}
\begin{array}{|cccl|}
\hline
\text{Class} &
\text{Parameters} &
\text{2-rank} &
\text{Clique polynomial}
\\
\hline
0 &
(256, 120, 56, 56) &
14 &
\begin{array}{l}
32768t^{9} + 712704t^{8} + 3014656t^{7}
\,+
\\
 5734400t^{6} + 5013504t^{5} + 1908736t^{4}
\,+
\\
 286720t^{3} + 15360t^{2} + 256t + 1
\end{array}
\\
1 &
(256, 120, 56, 56) &
14 &
\begin{array}{l}
24576t^{9} + 466944t^{8} + 2064384t^{7}
\,+
\\
 4685824t^{6} + 4620288t^{5} + 1875968t^{4}
\,+
\\
 286720t^{3} + 15360t^{2} + 256t + 1
\end{array}
\\
2 &
(256, 136, 72, 72) &
14 &
\begin{array}{l}
172032t^{9} + 5332992t^{8} + 20283392t^{7}
\,+
\\
 27295744t^{6} + 15630336t^{5} + 3956736t^{4}
\,+
\\
 417792t^{3} + 17408t^{2} + 256t + 1
\end{array}
\\
3 &
(256, 136, 72, 72) &
14 &
\begin{array}{l}
147456t^{9} + 3858432t^{8} + 15990784t^{7}
\,+
\\
 24150016t^{6} + 14974976t^{5} + 3923968t^{4}
\,+
\\
 417792t^{3} + 17408t^{2} + 256t + 1
\end{array}
\\
4 &
(256, 120, 56, 56) &
14 &
\begin{array}{l}
16384t^{9} + 270336t^{8} + 1376256t^{7}
\,+
\\
 3768320t^{6} + 4227072t^{5} + 1843200t^{4}
\,+
\\
 286720t^{3} + 15360t^{2} + 256t + 1
\end{array}
\\
5 &
(256, 136, 72, 72) &
14 &
\begin{array}{l}
163840t^{9} + 3858432t^{8} + 15532032t^{7}
\,+
\\
 23887872t^{6} + 14974976t^{5} + 3923968t^{4}
\,+
\\
 417792t^{3} + 17408t^{2} + 256t + 1
\end{array}
\\
\hline
\end{array}
\end{align*}
\caption{$[f_{8,8}]$ extended Cayley classes.}
\label{tab-c8_8_EC_classes}
\end{table}

\begin{figure}[!bhpt] 
\centering
\begin{minipage}{.48\textwidth}
  \centering
  \includegraphics[width=.9\linewidth]{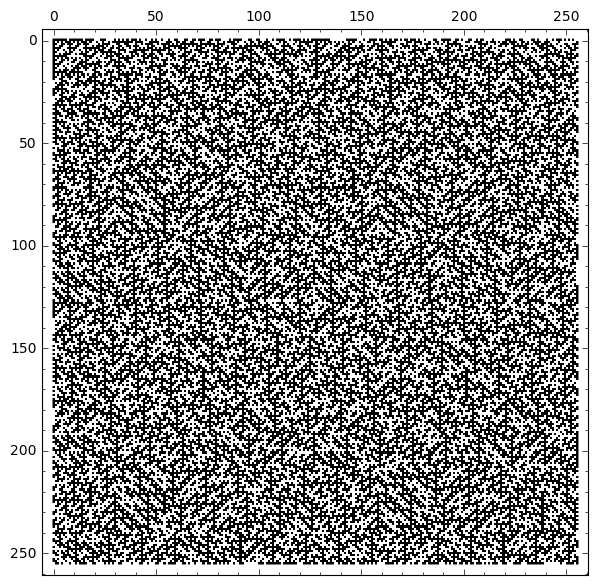}
  \captionof{figure}{$[f_{8,8}]$: weight classes. ~~~~\\~~~~}
  \label{fig:c8_8_weight_class_matrix}
\end{minipage}%
~~~~
\begin{minipage}{.48\textwidth}
  \centering
  \includegraphics[width=.9\linewidth]{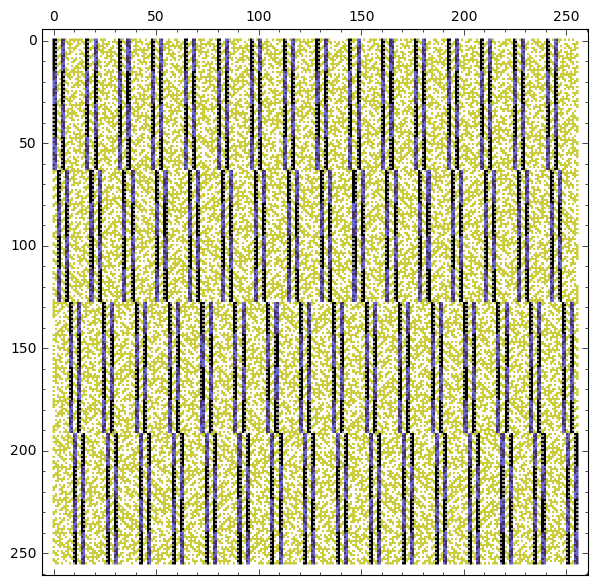}
  \captionof{figure}{$[f_{8,8}]$: extended Cayley classes.}
  \label{fig:c8_8_bent_cayley_graph_index_matrix}
\end{minipage}
\end{figure}

\newpage
\paragraph*{ET class $[f_{8,9}]$.}
This is the ET class of the bent function
\small{}
\begin{align*}
f_{ 8 , 9 } &=
\begin{array}{l}
x_{0} x_{1} x_{6} + x_{0} x_{3} + x_{1} x_{4} + x_{2} x_{3} x_{6} + x_{2} x_{5} + x_{3} x_{4}\, +
x_{4} x_{5} x_{6} + x_{6} x_{7}.
\end{array}
\end{align*}
\normalsize{}
The ET class contains 8 extended Cayley classes as per
Table~\ref{tab-c8_9_EC_classes}.
In 4 of these 8 classes, each bent function is extended Cayley equivalent to its dual.
In the remaining 4 extended Cayley classes, the dual of every bent function in the class has a Cayley graph
that is isomorphic to that of one other class. That is, the 4 Cayley classes form two duality pairs of classes.
This correspondence between Cayley classes and Cayley classes of duals is shown in Table~\ref{tab-c8_9-dual-EC_classes}.

\begin{figure}[!bhpt] 
\centering
\begin{minipage}{.48\textwidth}
  \centering
  \includegraphics[width=.9\linewidth]{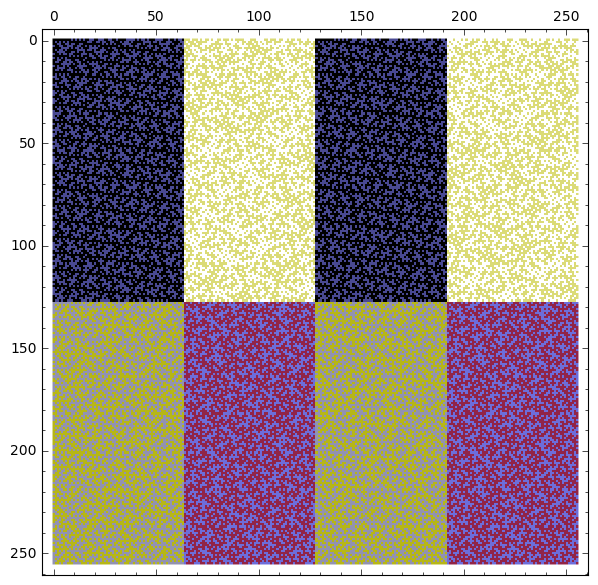}
  \captionof{figure}{$[f_{8,9}]$: 8 extended Cayley classes ~~ ~~~~ ~~~~ ~~~~~~~~~}
  \label{fig:c8_9_bent_cayley_graph_index_matrix}
\end{minipage}
~~
\begin{minipage}{.48\textwidth}
  \centering
  \includegraphics[width=.9\linewidth]{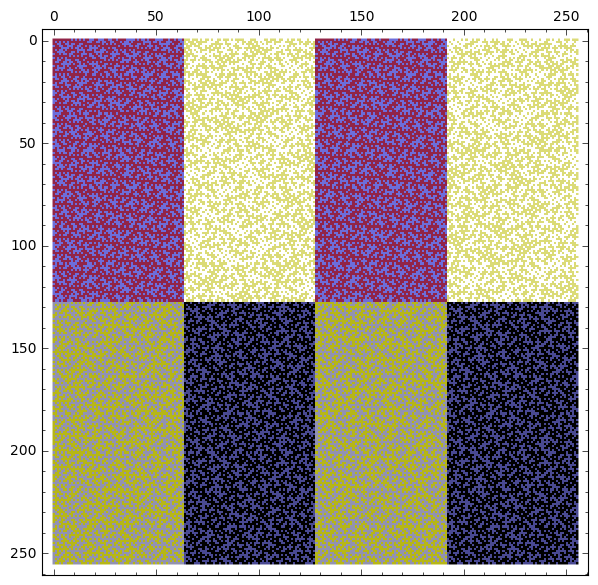}
  \captionof{figure}{$[f_{8,9}]$: 8 extended Cayley classes of dual bent functions}
  \label{fig:c8_9_dual_cayley_graph_index_matrix}
\end{minipage}%
\end{figure}

The 8 extended Cayley classes are distributed
as shown in Figures~\ref{fig:c8_9_bent_cayley_graph_index_matrix} (classes) and~\ref{fig:c8_9_dual_cayley_graph_index_matrix}
(classes of duals).

\begin{table}[!bhpt] 
\small{}
\begin{align*}
\def\arraystretch{1.2}
\begin{array}{|cccl|}
\hline
\text{Class} &
\text{Parameters} &
\text{2-rank} &
\text{Clique polynomial}
\\
\hline
0 &
(256, 120, 56, 56) &
16 &
\begin{array}{l}
45056t^{9} + 780288t^{8} + 2998272t^{7}
\,+
\\
 5505024t^{6} + 4816896t^{5} + 1892352t^{4}
\,+
\\
 286720t^{3} + 15360t^{2} + 256t + 1
\end{array}
\\
1 &
(256, 120, 56, 56) &
16 &
\begin{array}{l}
45056t^{9} + 780288t^{8} + 2998272t^{7}
\,+
\\
 5505024t^{6} + 4816896t^{5} + 1892352t^{4}
\,+
\\
 286720t^{3} + 15360t^{2} + 256t + 1
\end{array}
\\
2 &
(256, 136, 72, 72) &
16 &
\begin{array}{l}
184320t^{9} + 3852288t^{8} + 14893056t^{7}
\,+
\\
 23003136t^{6} + 14647296t^{5} + 3907584t^{4}
\,+
\\
 417792t^{3} + 17408t^{2} + 256t + 1
\end{array}
\\
3 &
(256, 136, 72, 72) &
16 &
\begin{array}{l}
184320t^{9} + 3852288t^{8} + 14893056t^{7}
\,+
\\
 23003136t^{6} + 14647296t^{5} + 3907584t^{4}
\,+
\\
 417792t^{3} + 17408t^{2} + 256t + 1
\end{array}
\\
4 &
(256, 120, 56, 56) &
16 &
\begin{array}{l}
105984t^{8} + 976896t^{7} + 3440640t^{6}
\,+
\\
 4128768t^{5} + 1835008t^{4} + 286720t^{3}
\,+
\\
 15360t^{2} + 256t + 1
\end{array}
\\
5 &
(256, 136, 72, 72) &
16 &
\begin{array}{l}
9216t^{10} + 264192t^{9} + 4468224t^{8}
\,+
\\
 16803840t^{7} + 24772608t^{6} + 15138816t^{5}
\,+
\\
 3932160t^{4} + 417792t^{3} + 17408t^{2} + 256t + 1
\end{array}
\\
6 &
(256, 120, 56, 56) &
16 &
\begin{array}{l}
9216t^{9} + 124416t^{8} + 976896t^{7}
\,+
\\
 3440640t^{6} + 4128768t^{5} + 1835008t^{4}
\,+
\\
 286720t^{3} + 15360t^{2} + 256t + 1
\end{array}
\\
7 &
(256, 136, 72, 72) &
16 &
\begin{array}{l}
193536t^{9} + 4449792t^{8} + 16803840t^{7}
\,+
\\
 24772608t^{6} + 15138816t^{5} + 3932160t^{4}
\,+
\\
 417792t^{3} + 17408t^{2} + 256t + 1
\end{array}
\\
\hline
\end{array}
\end{align*}
\caption{$[f_{8,9}]$ extended Cayley classes.}
\label{tab-c8_9_EC_classes}
\end{table}

\begin{table}
\small{}
\begin{align*}
\def\arraystretch{1.2}
\begin{array}{|ccc|}
\hline
[f_{8,9}] &
[f_{8,9}] &
\text{Frequency}
\\
&
\text{duals} &
\\
\hline
0 &   1 & 9216
\\
1 &   0 & 9216
\\
2 &   3 & 7168
\\
3 &   2 & 7168
\\
4 &   4 & 8192
\\
5 &   5 & 8192
\\
6 &   6 & 8192
\\
7 &   7 & 8192
\\
\hline
\end{array}
\end{align*}
\caption{Correspondence between $[f_{8,9}]$ extended Cayley classes and $[f_{8,9}]$ dual extended Cayley classes.}
\label{tab-c8_9-dual-EC_classes}
\end{table}
\newpage
\paragraph*{ET class $[f_{8,10}]$.}
This is the ET class of the bent function
\small{}
\begin{align*}
f_{ 8 , 10 } :=
\begin{array}{l}
x_{0} x_{1} x_{2} + x_{0} x_{3} x_{6} + x_{0} x_{4} + x_{0} x_{5} + x_{1} x_{3} x_{4} + x_{1} x_{6}
+ x_{2} x_{3} x_{5}\, +
\\
x_{2} x_{4} + x_{3} x_{7}.
\end{array}
\end{align*}
\normalsize{}
The ET class contains 10 extended Cayley classes as per
Table~\ref{tab-c8_10_EC_classes}.
In 4 of these 10 classes, each bent function is extended Cayley equivalent to its dual.
In the remaining 6 extended Cayley classes, the dual of every bent function in the class has a Cayley graph
that is isomorphic to that of one other class. That is, the 6 Cayley classes form three duality pairs of classes.
This correspondence between Cayley classes and Cayley classes of duals is shown in Table~\ref{tab-c8_10-dual-EC_classes}.

The 10 extended Cayley classes are distributed
as shown in Figures~\ref{fig:c8_10_bent_cayley_graph_index_matrix} (classes) and~\ref{fig:c8_10_dual_cayley_graph_index_matrix}
(classes of duals).
\begin{table}[!bhpt] 
\small{}
\begin{align*}
\def\arraystretch{1.2}
\begin{array}{|cccl|}
\hline
\text{Class} &
\text{Parameters} &
\text{2-rank} &
\text{Clique polynomial}
\\
\hline
0 &
(256, 120, 56, 56) &
16 &
\begin{array}{l}
16384t^{9} + 464896t^{8} + 2310144t^{7}
\,+
\\
 5046272t^{6} + 4816896t^{5} + 1892352t^{4}
\,+
\\
 286720t^{3} + 15360t^{2} + 256t + 1
\end{array}
\\
1 &
(256, 120, 56, 56) &
16 &
\begin{array}{l}
16384t^{9} + 464896t^{8} + 2310144t^{7}
\,+
\\
 5046272t^{6} + 4816896t^{5} + 1892352t^{4}
\,+
\\
 286720t^{3} + 15360t^{2} + 256t + 1
\end{array}
\\
2 &
(256, 120, 56, 56) &
16 &
\begin{array}{l}
12288t^{9} + 301056t^{8} + 1589248t^{7}
\,+
\\
 4128768t^{6} + 4423680t^{5} + 1859584t^{4}
\,+
\\
 286720t^{3} + 15360t^{2} + 256t + 1
\end{array}
\\
3 &
(256, 120, 56, 56) &
16 &
\begin{array}{l}
12288t^{9} + 301056t^{8} + 1589248t^{7}
\,+
\\
 4128768t^{6} + 4423680t^{5} + 1859584t^{4}
\,+
\\
 286720t^{3} + 15360t^{2} + 256t + 1
\end{array}
\\
4 &
(256, 136, 72, 72) &
16 &
\begin{array}{l}
110592t^{9} + 4159488t^{8} + 17285120t^{7}
\,+
\\
 25296896t^{6} + 15302656t^{5} + 3940352t^{4}
\,+
\\
 417792t^{3} + 17408t^{2} + 256t + 1
\end{array}
\\
5 &
(256, 136, 72, 72) &
16 &
\begin{array}{l}
110592t^{9} + 4159488t^{8} + 17285120t^{7}
\,+
\\
 25296896t^{6} + 15302656t^{5} + 3940352t^{4}
\,+
\\
 417792t^{3} + 17408t^{2} + 256t + 1
\end{array}
\\
6 &
(256, 120, 56, 56) &
16 &
\begin{array}{l}
2048t^{9} + 167424t^{8} + 1091584t^{7}
\,+
\\
 3440640t^{6} + 4128768t^{5} + 1835008t^{4}
\,+
\\
 286720t^{3} + 15360t^{2} + 256t + 1
\end{array}
\\
7 &
(256, 136, 72, 72) &
16 &
\begin{array}{l}
7168t^{10} + 143360t^{9} + 3804672t^{8}
\,+
\\
 15886336t^{7} + 24313856t^{6} + 15138816t^{5}
\,+
\\
 3932160t^{4} + 417792t^{3} + 17408t^{2} + 256t + 1
\end{array}
\\
8 &
(256, 120, 56, 56) &
16 &
\begin{array}{l}
9216t^{9} + 181760t^{8} + 1091584t^{7}
\,+
\\
 3440640t^{6} + 4128768t^{5} + 1835008t^{4}
\,+
\\
 286720t^{3} + 15360t^{2} + 256t + 1
\end{array}
\\
9 &
(256, 136, 72, 72) &
16 &
\begin{array}{l}
107520t^{9} + 3790336t^{8} + 15886336t^{7}
\,+
\\
 24313856t^{6} + 15138816t^{5} + 3932160t^{4}
\,+
\\
 417792t^{3} + 17408t^{2} + 256t + 1
\end{array}
\\
\hline
\end{array}
\end{align*}
\caption{$[f_{8,10}]$ extended Cayley classes.}
\label{tab-c8_10_EC_classes}
\end{table}

\begin{table}
\small{}
\begin{align*}
\def\arraystretch{1.2}
\begin{array}{|ccc|}
\hline
[f_{8,10}] &
[f_{8,10}] &
\text{Frequency}
\\
&
\text{duals} &
\\
\hline
  0 &    1 & 2048
\\
  1 &    0 & 2048
\\
  2 &    3 & 7168
\\
  3 &    2 & 7168
\\
  4 &    5 & 7168
\\
  5 &    4 & 7168
\\
  6 &    6 & 8192
\\
  7 &    7 & 8192
\\
  8 &    8 & 8192
\\
  9 &    9 & 8192
\\
\hline
\end{array}
\end{align*}
\caption{Correspondence between $[f_{8,10}]$ extended Cayley classes and $[f_{8,10}]$ dual extended Cayley classes.}
\label{tab-c8_10-dual-EC_classes}
\end{table}

\begin{figure}[!bhpt] 
\centering
\begin{minipage}{.48\textwidth}
  \centering
  \includegraphics[width=.9\linewidth]{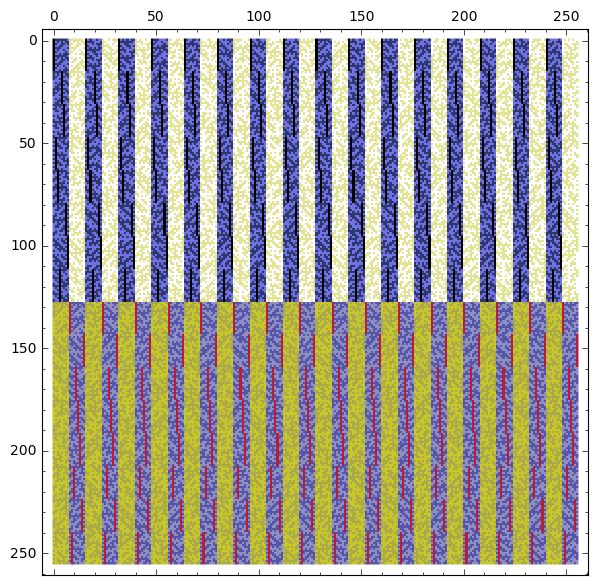}
  \captionof{figure}{$[f_{8,10}]$: extended Cayley classes.}
  \label{fig:c8_10_bent_cayley_graph_index_matrix}
\end{minipage}
~~
\begin{minipage}{.48\textwidth}
  \centering
  \includegraphics[width=.9\linewidth]{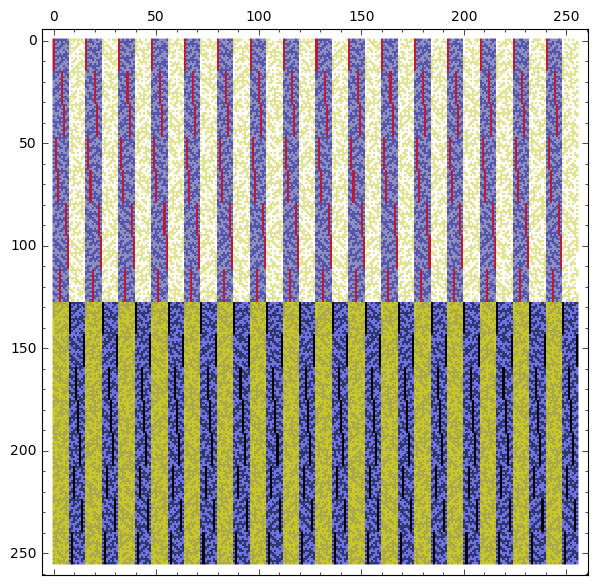}
  \captionof{figure}{$[f_{8,10}]$: extended Cayley classes of dual bent functions.}
  \label{fig:c8_10_dual_cayley_graph_index_matrix}
\end{minipage}%
\end{figure}
\newpage

\subsection{Two sequences of bent functions}

As stated in the introduction, in a recent paper \cite{Leo17Hurwitz},
the author found an example of two infinite series of bent functions whose
Cayley graphs have the same strongly regular parameters at each dimension,
but are not isomorphic if the dimension is 8 or more.
The sequences are $\sigma_m$ and $\tau_m$ for $m \geqslant 1$,
whose definitions are reproduced here from \cite{Leo15Twin}.

\begin{definition}
\label{def-sigma-tau}
The sign-of-square function $\sigma_m : \Z_2^{2 m} \To \Z_2$ is defined as follows:

For $i \in \Z_{2^{2m}},$ $\sigma_m(i) = 1$ if and only if the number of
1 digits in the base 4 representation of $i$ is odd.

The non-diagonal-symmetry function $\tau_m \colon \Z_{2^{2 m}} \To \Z_2$
is defined as follows.

For $i$ in $\Z_2^2$:
\begin{align*}
&\tau_1(i) :=
\begin{cases}
1 &\text{if~}i = 10,
\\
0 &\text{otherwise}.
\end{cases}
\end{align*}

For $i$ in $\Z_2^{2 m - 2}$:
\begin{align*}
&\tau_m (00 \odot i) := \tau_{m-1}(i),
\\
&\tau_m (01 \odot i) := \sigma_{m-1}(i),
\\
&\tau_m (10 \odot i) := \sigma_{m-1}(i) + 1,
\\
&\tau_m (11 \odot i) := \tau_{m-1}(i).
\end{align*}
where $\odot$ denotes concatenation of bit vectors, and $\sigma$ is the sign-of-square function, as above.
\end{definition}

As shown in  \cite{Leo15Twin}, both sequences produce Cayley graphs whose strongly regular parameters are
\begin{align*}
(v_m,k_m,\lambda_m=\mu_m) &= (4^m, 2^{2 m - 1} - 2^{m-1}, 2^{2 m - 2} - 2^{m-1}).
\end{align*}


\begin{figure}[!ht]
\centering
\begin{minipage}{.48\textwidth}
  \centering
  \includegraphics[width=.9\linewidth]{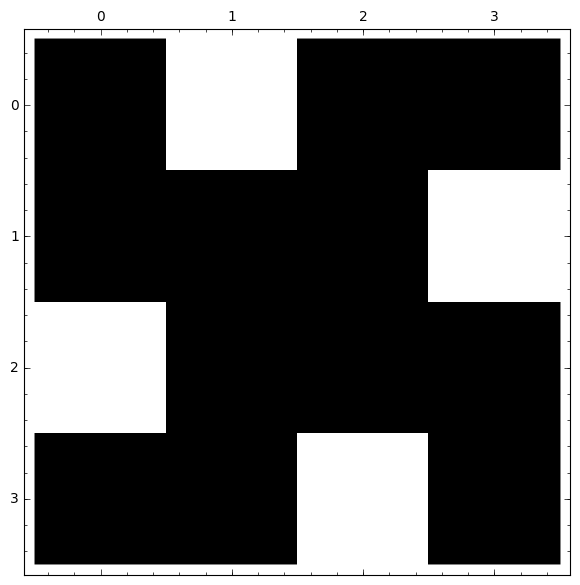}
  \captionof{figure}{$[\sigma_1]$:\\2 extended Cayley classes}
  \label{fig:sigma_1_bent_cayley_graph_index_matrix}
\end{minipage}%
\begin{minipage}{.48\textwidth}
  \centering
  \includegraphics[width=.9\linewidth]{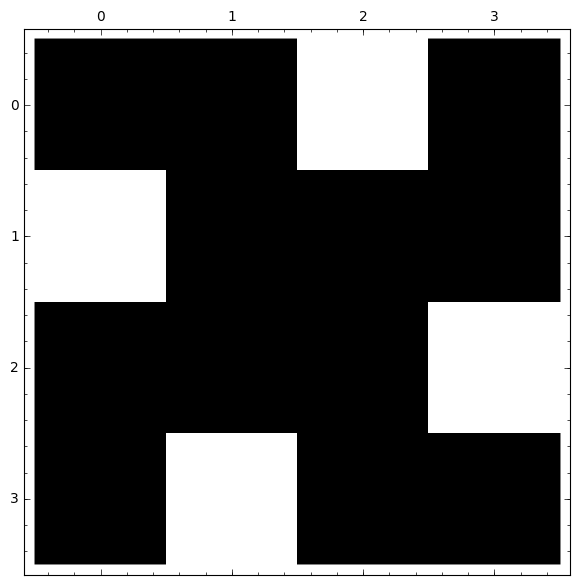}
  \captionof{figure}{$[\tau_1]$:\\2 extended Cayley classes}
  \label{fig:tau_1_bent_cayley_graph_index_matrix}
\end{minipage}
\end{figure}

\begin{figure}[!hb]
\centering
\begin{minipage}{.48\textwidth}
  \centering
  \includegraphics[width=.9\linewidth]{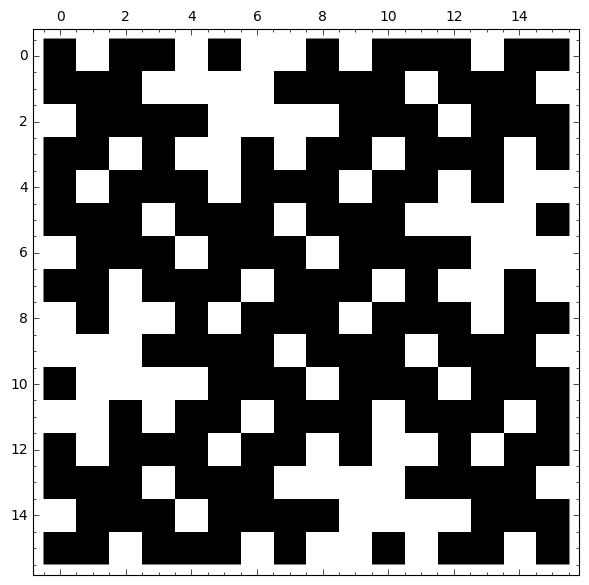}
  \captionof{figure}{$[\sigma_2]$:\\2 extended Cayley classes}
  \label{fig:sigma_2_bent_cayley_graph_index_matrix}
\end{minipage}%
\begin{minipage}{.48\textwidth}
  \centering
  \includegraphics[width=.9\linewidth]{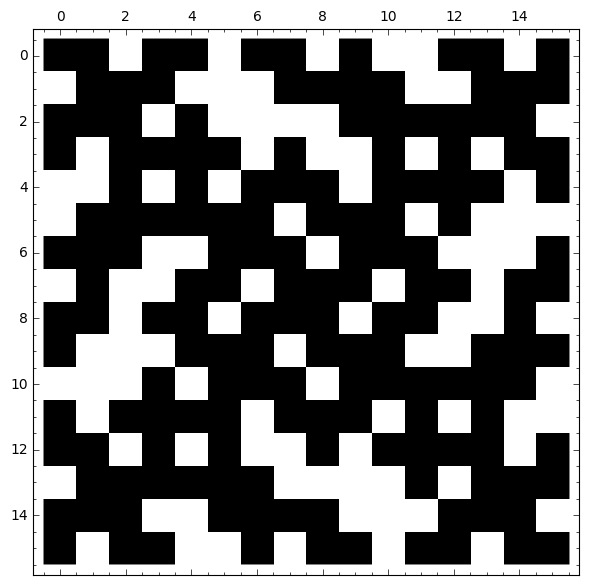}
  \captionof{figure}{$[\tau_2]$:\\2 extended Cayley classes}
  \label{fig:tau_2_bent_cayley_graph_index_matrix}
\end{minipage}
\end{figure}

Figures \ref{fig:sigma_1_bent_cayley_graph_index_matrix} to \ref{fig:tau_4_bent_cayley_graph_index_matrix}
illustrate the extended Cayley classes within the ET classes of each of function $\sigma_m$ and $\tau_m$ for $m$ from 1 to 4.
Note that $\tau_3$ has degree 3, and $\tau_4$ has degree 4.
As shown in \cite{Leo17Hurwitz},
The functions $\sigma_m$ and $\tau_m$ are extended Cayley equivalent for $m$ from 1 to 3, but are inequivalent for $m>3$.

\begin{figure}[!ht]
\centering
\begin{minipage}{.48\textwidth}
  \centering
  \includegraphics[width=.9\linewidth]{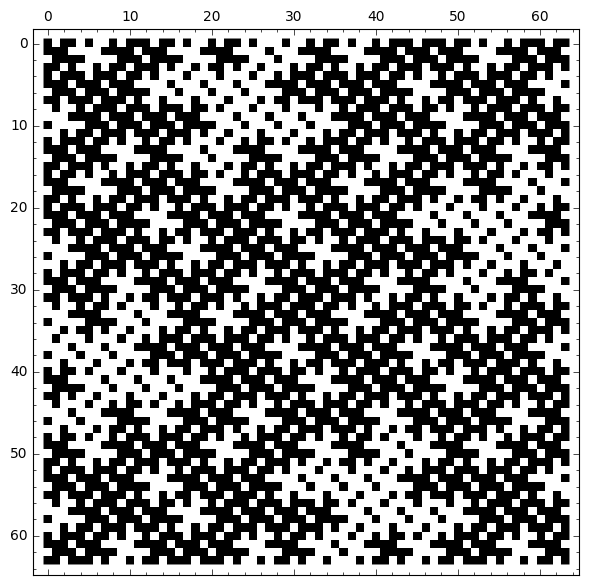}
  \captionof{figure}{$[\sigma_3]$:\\2 extended Cayley classes}
  \label{fig:sigma_3_bent_cayley_graph_index_matrix}
\end{minipage}%
\begin{minipage}{.48\textwidth}
  \centering
  \includegraphics[width=.9\linewidth]{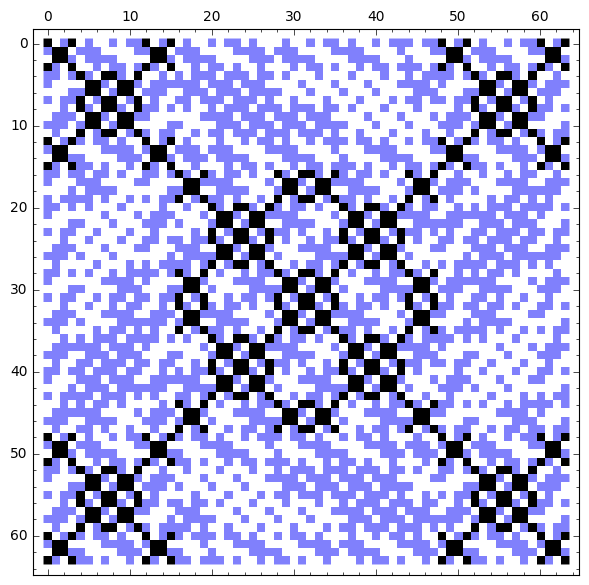}
  \captionof{figure}{$[\tau_3]$:\\3 extended Cayley classes}
  \label{fig:tau_3_bent_cayley_graph_index_matrix}
\end{minipage}
\end{figure}

\begin{figure}[!hb]
\centering
\begin{minipage}{.48\textwidth}
  \centering
  \includegraphics[width=.9\linewidth]{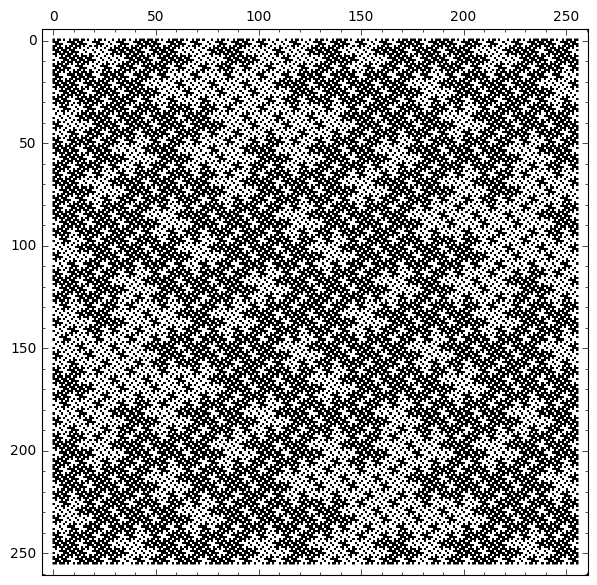}
  \captionof{figure}{$[\sigma_4]$:\\2 extended Cayley classes}
  \label{fig:sigma_4_bent_cayley_graph_index_matrix}
\end{minipage}%
\begin{minipage}{.48\textwidth}
  \centering
  \includegraphics[width=.9\linewidth]{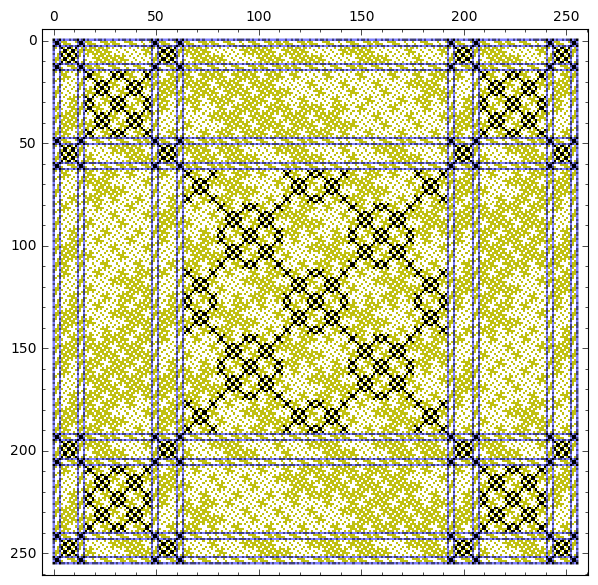}
  \captionof{figure}{$[\tau_4]$:\\5 extended Cayley classes}
  \label{fig:tau_4_bent_cayley_graph_index_matrix}
\end{minipage}
\end{figure}
~
\newpage
~
\newpage
~
\newpage
\subsection{CAST-128 S-boxes}
The CAST-128 encryption algorithm is used in PGP and elsewhere \cite{Ada97}.
The algorithm uses 8 S-boxes, each of which consists of 32 Boolean bent functions in 8 dimensions,
with degree 4, making 256 bent functions in total.
The full CAST-128 algorithm, including the contents of the S-boxes,
is published as IETF Request For Comments 2144 \cite{RFC2144}.

The bent function $cast128_{1,0}$ is the first bent function of S-box number 1 of CAST-128.
Its definition by algebraic normal form is
\footnotesize{}
\begin{align*}
cast128_{1,0} :=
\begin{array}{l}
x_{0} x_{1} x_{2} x_{3} + x_{0} x_{1} x_{2} x_{4} + x_{0} x_{1} x_{2} x_{5} + x_{0} x_{1} x_{2} + x_{0} x_{1} x_{3} x_{5} + x_{0} x_{1} x_{3} x_{6}\, +
\\
x_{0} x_{1} x_{3} + x_{0} x_{1} x_{5} x_{6} + x_{0} x_{1} x_{6} + x_{0} x_{1} x_{7} + x_{0} x_{2} x_{3} x_{4} + x_{0} x_{2} x_{3}\, +
\\
x_{0} x_{2} x_{4} x_{5} + x_{0} x_{2} x_{5} x_{7} + x_{0} x_{2} x_{6} + x_{0} x_{2} x_{7} + x_{0} x_{2} + x_{0} x_{3} x_{4} x_{5}\, +
\\
x_{0} x_{3} x_{4} x_{6} + x_{0} x_{3} x_{5} x_{6} + x_{0} x_{3} + x_{0} x_{4} x_{5} x_{6} + x_{0} x_{4} x_{5} x_{7} + x_{0} x_{4} x_{5}\, +
\\
x_{0} x_{4} x_{6} + x_{0} x_{4} + x_{0} x_{5} x_{6} + x_{0} x_{5} x_{7} + x_{0} x_{6} + x_{0} x_{7} + x_{0} + x_{1} x_{2} x_{4} x_{6}\, +
\\
x_{1} x_{2} x_{4} x_{7} + x_{1} x_{2} x_{5} x_{6} + x_{1} x_{2} x_{7} + x_{1} x_{3} x_{4} x_{6} + x_{1} x_{3} x_{4} x_{7} + x_{1} x_{3} x_{5}\, +
\\
x_{1} x_{3} x_{7} + x_{1} x_{4} x_{5} x_{7} + x_{1} x_{4} x_{6} + x_{1} x_{5} x_{6} + x_{1} + x_{2} x_{3} x_{4} x_{6} + x_{2} x_{3} x_{4}\, +
\\
x_{2} x_{3} x_{5} x_{6} + x_{2} x_{3} x_{5} + x_{2} x_{3} x_{7} + x_{2} x_{4} + x_{2} x_{5} x_{6} + x_{2} x_{5} + x_{2} + x_{3} x_{4} x_{5} x_{6}\, +
\\
x_{3} x_{5} x_{6} + x_{3} x_{5} x_{7} + x_{3} x_{6} + x_{3} + x_{4} + x_{6} x_{7}.
\end{array}
\end{align*}
\normalsize{}

This bent function $cast128_{1,0}$ is prolific:
the ET class $[cast128_{1,0}]$ contains the maximum possible number of Cayley classes,
that is $65\,536$.
The duals of the bent functions $[cast128_{1,0}]$ give another $65\,536$ extended Cayley classes.
In other words, no bent function in $[cast128_{1,0}]$ is EA equivalent to its dual.
The two weight classes of $[cast128_{1,0}]$ are
shown in Figure~\ref{fig:cast128_1_0_weight_class_matrix}.

\begin{figure}[!hb]
\centering
\begin{minipage}{.48\textwidth}
  \centering
\includegraphics[width=.9\linewidth]{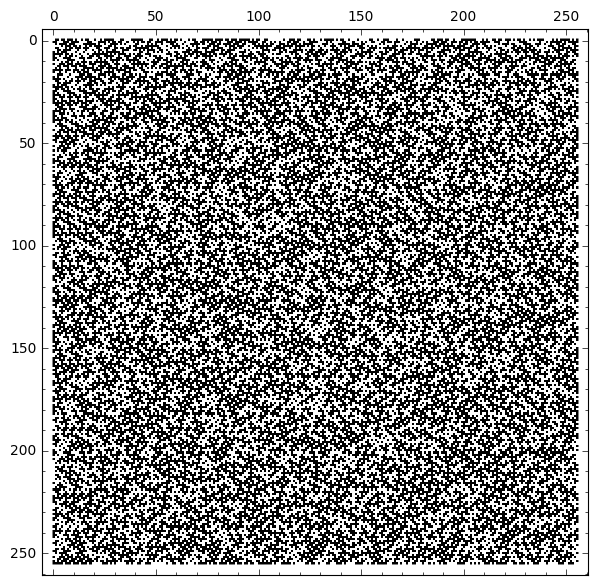}
  \captionof{figure}{$[cast128_{1,0}]$:\\Weight classes ~~~~~~ ~~~~~~~~\\~\\Colormap: gist\_stern.}
  \label{fig:cast128_1_0_weight_class_matrix}
\end{minipage}
\begin{minipage}{.48\textwidth}
  \centering
\includegraphics[width=.9\linewidth]{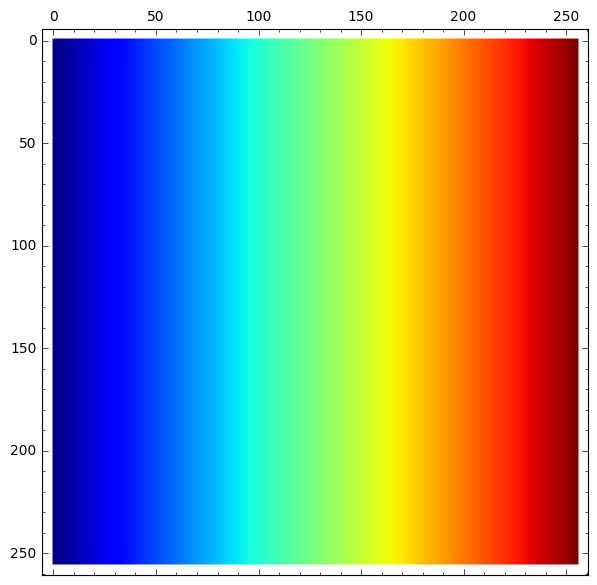}
  \captionof{figure}{$[cast128_{1,0}]$:\\$65\,536$ extended Cayley classes.\\Total including duals is $131\,072$.
\\Colormap: jet.}
  \label{fig:cast128_1_0_bent_cayley_graph_index_matrix}
\end{minipage}%
\end{figure}

\begin{figure}[!ht]
\centering
\begin{minipage}{.48\textwidth}
  \centering
\includegraphics[width=.9\linewidth]{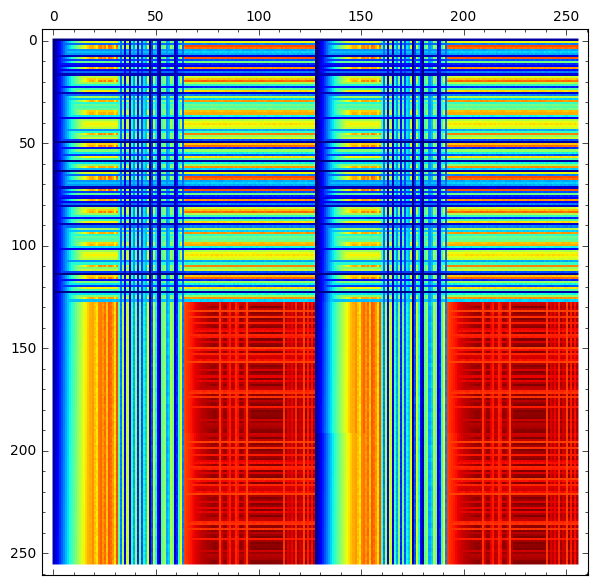}
  \captionof{figure}{$[cast128_{2,1}]$:\\$8\,256$ extended Cayley classes.\\Total including duals is $8\,256$.
\\Colormap: jet.}
  \label{fig:cast128_2_1_bent_cayley_graph_index_matrix}
\end{minipage}%
\begin{minipage}{.48\textwidth}
  \centering
\includegraphics[width=.9\linewidth]{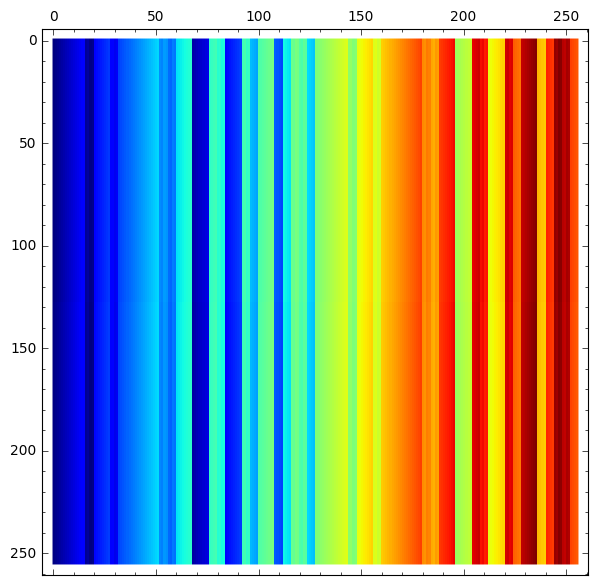}
  \captionof{figure}{$[cast128_{2,16}]$:\\$32\,768$ extended Cayley classes.\\Total including duals is $65\,536$.
\\Colormap: jet.}
  \label{fig:cast128_2_16_bent_cayley_graph_index_matrix}
\end{minipage}%
\end{figure}
\begin{figure}[!hb]
\centering
\begin{minipage}{.48\textwidth}
  \centering
\includegraphics[width=.9\linewidth]{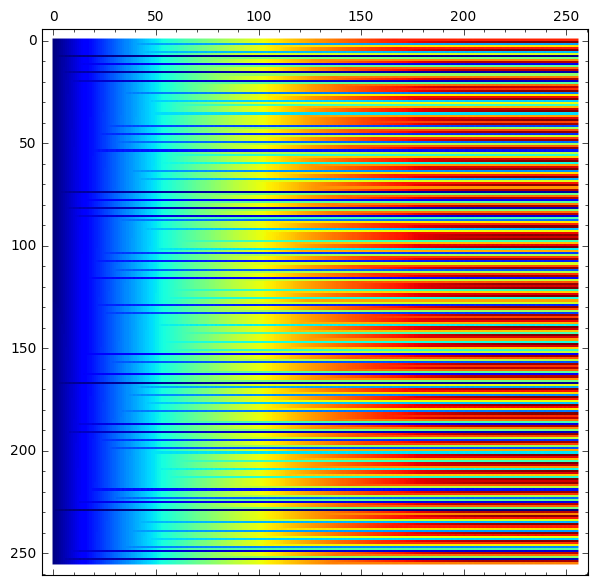}
  \captionof{figure}{$[cast128_{4,27}]$:\\$65\,536$ extended Cayley classes.\\Total including duals is $65\,536$.
\\Colormap: jet.}
  \label{fig:cast128_4_27_bent_cayley_graph_index_matrix}
\end{minipage}%
\begin{minipage}{.48\textwidth}
  \centering
\includegraphics[width=.9\linewidth]{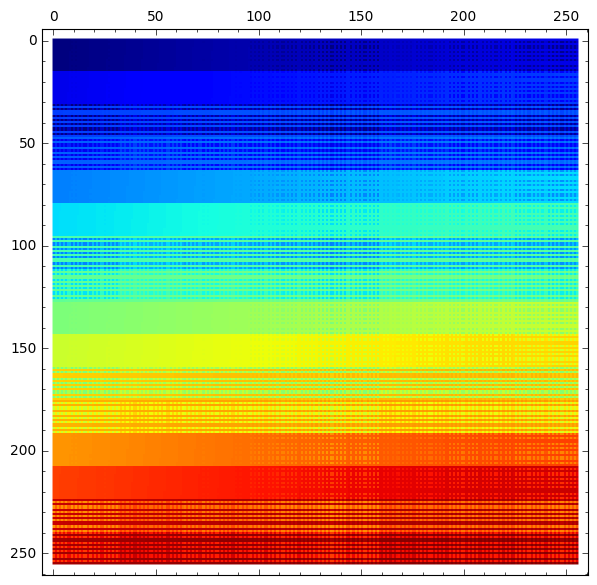}
  \captionof{figure}{$[cast128_{5,16}]$:\\$33\,280$ extended Cayley classes.\\Total including duals is $66\,560$.
\\Colormap: jet.}
  \label{fig:cast128_5_16_bent_cayley_graph_index_matrix}
\end{minipage}%
\end{figure}
\begin{figure}[!ht]
\centering
\begin{minipage}{.48\textwidth}
  \centering
\includegraphics[width=.9\linewidth]{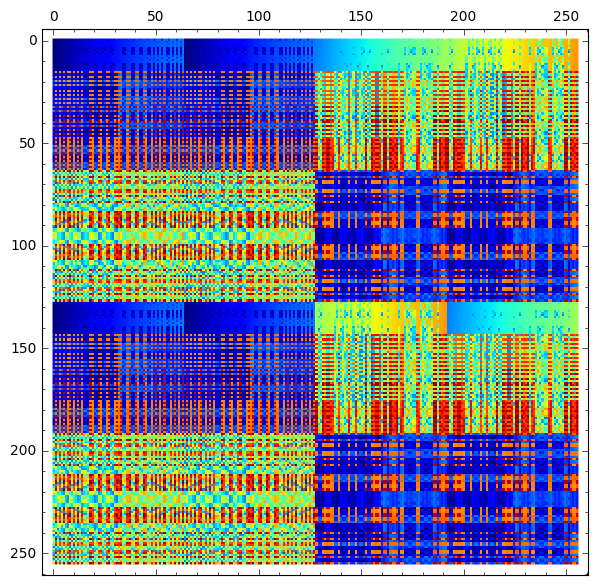}
  \captionof{figure}{$[cast128_{5,27}]$:\\$6\,144$ extended Cayley classes.\\Total including duals is $6\,144$.
\\Colormap: jet.}
  \label{fig:cast128_5_27_bent_cayley_graph_index_matrix}
\end{minipage}%
\begin{minipage}{.48\textwidth}
  \centering
\includegraphics[width=.9\linewidth]{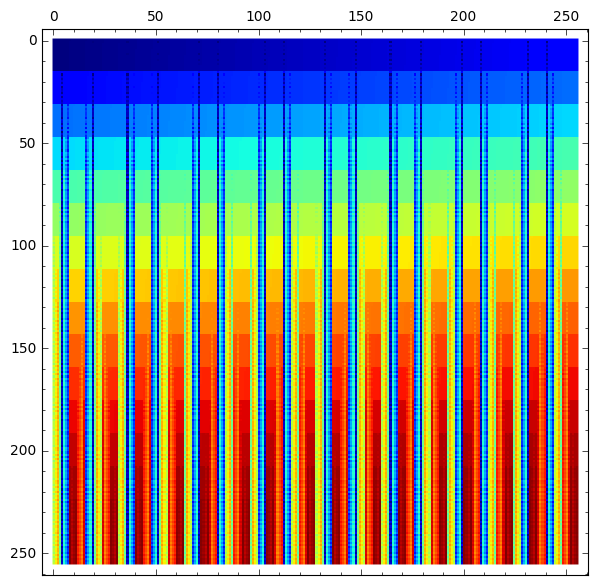}
  \captionof{figure}{$[cast128_{6,17}]$:\\$65\,536$ extended Cayley classes.\\Total including duals is $65\,536$.
\\Colormap: jet.}
  \label{fig:cast128_6_17_bent_cayley_graph_index_matrix}
\end{minipage}%
\end{figure}
\begin{figure}[!ht]
\centering
\begin{minipage}{.48\textwidth}
  \centering
\includegraphics[width=.9\linewidth]{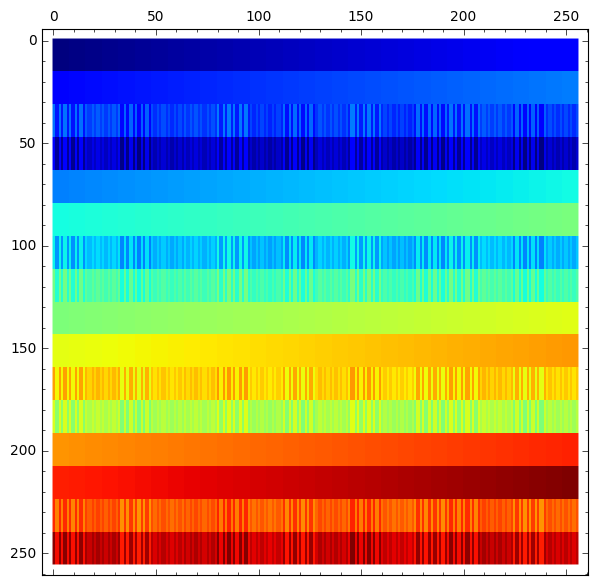}
  \captionof{figure}{$[cast128_{7,15}]$:\\$32\,768$ extended Cayley classes.\\Total including duals is $65\,536$.
\\Colormap: jet.}
  \label{fig:cast128_7_15_bent_cayley_graph_index_matrix}
\end{minipage}%
\begin{minipage}{.48\textwidth}
  \centering
\includegraphics[width=.9\linewidth]{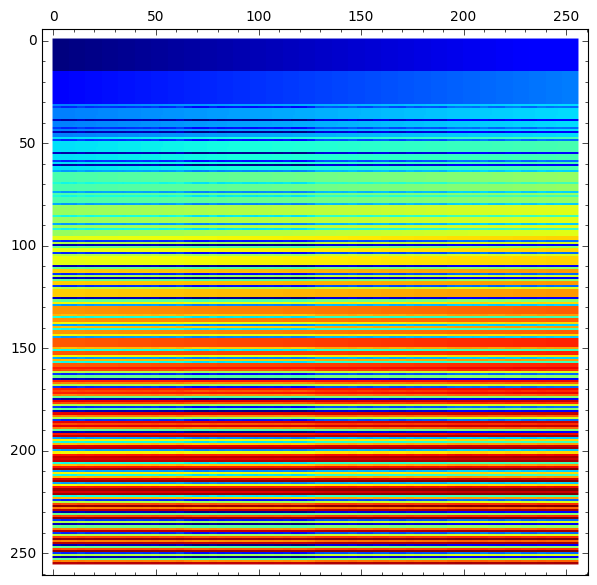}
  \captionof{figure}{$[cast128_{7,21}]$:\\$32\,768$ extended Cayley classes.\\Total including duals is $65\,536$.
\\Colormap: jet.}
  \label{fig:cast128_7_21_bent_cayley_graph_index_matrix}
\end{minipage}%
\end{figure}

Of the 256 bent functions that make up the 8 S-boxes of CAST-128, 248 are like $cast128_{1,0}$,
they are prolific and are not EA equivalent to their duals.
The remaining 8 bent functions are exceptional.
Both $cast128_{4,27}$ and $cast128_{6,17}$ are prolific, but both are EA equivalent to their duals.
\newpage

The other 6 bent functions are not prolific and the number of Cayley classes for each is given in the captions to
Figures \ref{fig:cast128_2_1_bent_cayley_graph_index_matrix} to \ref{fig:cast128_7_21_bent_cayley_graph_index_matrix}.
\subsection{Partial spread bent functions.}

According to Langevin and Hou \cite{LanH11counting}
there are $70\,576\,747\,237\,594\,114\,392\,064 \approx 2^{75.9}$ \Emph{partial spread} bent functions in
dimension 8,
contained in $14\,758$ EA classes.
The EA class representatives are listed at Langevin's web page \cite{Lan10psf}.
On this web page,
the file \texttt{psf-8.txt} contains details of $9316$ representatives that are
$\mathcal{PS}^{(-)}$ bent functions, all of degree 4; and
the file \texttt{psf-9.txt} contains details of $5442$ representatives that are
$\mathcal{PS}^{(+)}$ bent functions, of which $5440$ are of degree 4.

Preliminary calculations using SageMath on the NCI Raijin supercomputer indicate that
\begin{enumerate}
 \item
The $5442$ EA classes of $\mathcal{PS}^{(+)}$ bent functions of dimension 8
contain $296\,594\,720$ extended Cayley classes, assuming that each extended Cayley class appears in only one of the EA classes.
 \item
If the duals of the $5442$ representatives, and their corresponding EA classes are included,
the total number of extended Cayley classes is $541\,700\,450$, under the same assumption.
 \item
Of the $5442$ representatives,
$3434$ are prolific and not EA equivalent to their dual,
$582$ are prolific and are EA equivalent to their dual,
and the EA classes of the remaining $1426$ each contain less than $65\,536$ extended Cayley classes.
\end{enumerate}
The classifications of these $5442$ EA classes take up $2.1\,$TB of space on the NCI Raijin supercomputer.
The classifications of the $9316$ $\mathcal{PS}^{(-)}$ bent functions are currently being generated on Raijin.
In total, the classifications of the $\mathcal{PS}^{(+)}$ and $\mathcal{PS}^{(-)}$
bent functions are estimated to take up $6\,$TB of space on Raijin. 
It is intended that these classifications will be incorporated into a public database, if support can be found to maintain it
\cite{Leo18Database}. 



One example classification from $\mathcal{PS}^{(+)}$ in dimension 8 is that of $psf_{9,5439}$.
The bent function $psf_{9,5439}$ is listed as function number $5439$ in \texttt{psf-9.txt},
and is a $\mathcal{PS}^{(+)}$ bent function of degree 4.
Its ET class $[psf_{9,5439}]$ contains 16 extended Cayley classes,
of which 6 form three duality pairs similar to those seen in $[f_{8,9}]$ and $[f_{8,10}]$ above.

The 16 extended Cayley classes are distributed
as shown in Figures~\ref{fig:psf_9_5439_bent_cayley_graph_index_matrix} (classes) and~\ref{fig:psf_9_5439_dual_cayley_graph_index_matrix}
(classes of duals).

\begin{figure}[!ht] 
\centering
\begin{minipage}{.48\textwidth}
  \centering
  \includegraphics[width=.9\linewidth]{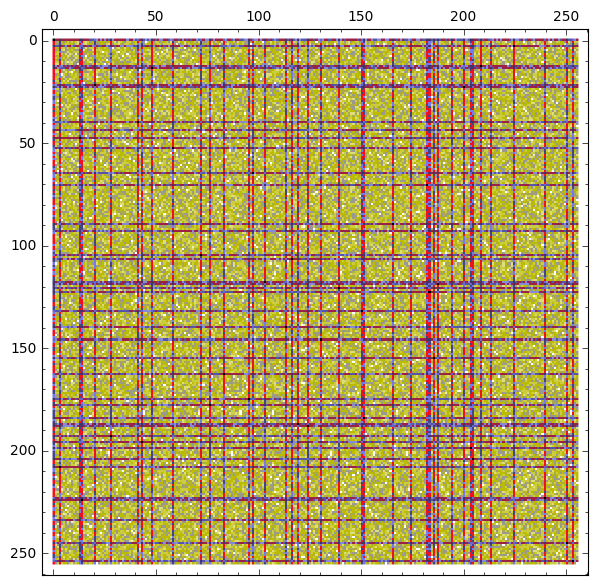}
  \captionof{figure}{$[psf_{9,5439}]$:\\extended Cayley classes ~~ ~~~~ ~~~~\\~~~~~~~~~}
  \label{fig:psf_9_5439_bent_cayley_graph_index_matrix}
\end{minipage}
~~
\begin{minipage}{.48\textwidth}
  \centering
  \includegraphics[width=.9\linewidth]{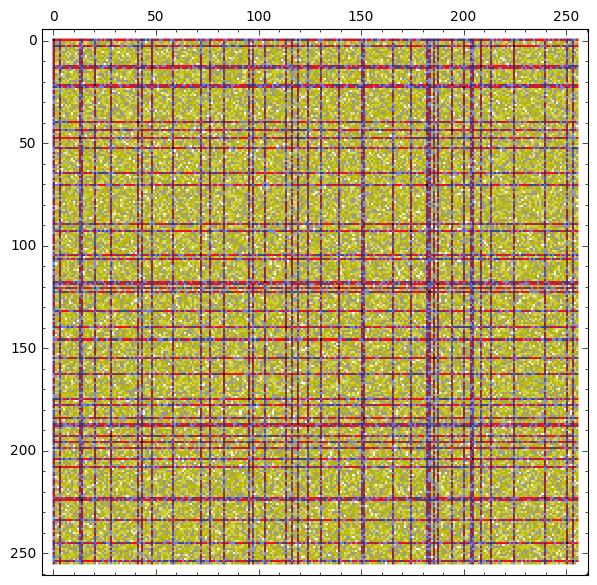}
  \captionof{figure}{$[psf_{9,5439}]$:\\extended Cayley classes of dual bent\\functions}
  \label{fig:psf_9_5439_dual_cayley_graph_index_matrix}
\end{minipage}%
\end{figure}




\newpage



\end{document}